\def\theenumi{\roman{enumi}}
\newtheorem{theorem}{Theorem}[section]
\newtheorem{lemma}[theorem]{Lemma}
\newtheorem{proposition}[theorem]{Proposition}
\newtheorem{corollary}[theorem]{Corollary}
\newenvironment{assumption}[1]
 {\taggedtheoremx}
 {\endtaggedtheoremx}
\theoremstyle{definition}
\newtheorem{definition}[theorem]{Definition}
\newtheorem{remark}[theorem]{Remark}
\numberwithin{equation}{section}
\newcommand{\R}{\mathbb{R}}
\newcommand{\IC}{\mathbb{C}}
\newcommand{\IN}{\mathbb{N}}
\newcommand{\IZ}{\mathbb{Z}}
\newcommand{\cH}{\mathcal{H}}
\newcommand{\cQ}{\mathcal{Q}}
\newcommand{\cJ}{\mathcal{J}}
\newcommand{\cS}{\mathcal{S}}
\newcommand{\cT}{\mathcal{T}}
\newcommand{\cA}{\mathcal{A}}
\newcommand{\cN}{\mathcal{N}}
\renewcommand{\L}{\mathrm{L}}
\newcommand{\C}{\mathrm{C}}
\newcommand{\B}{\mathrm{B}}
\renewcommand{\H}{\mathrm{H}}
\renewcommand{\S}{\mathrm{S}}
\newcommand{\W}{\mathrm{W}}
\newcommand{\IW}{\mathbb{W}_D}
\newcommand{\fa}{a}
\newcommand{\Jnorm}[1]{\llbracket #1 \rrbracket}
\newcommand{\fabs}[1]{\left|#1\right|}
\newcommand{\scal}[2]{(#1 \mathrel \vert \mathrel{} #2)}
\newcommand{\dscal}[2]{\langle #1 \mathrel \vert \mathrel{} #2 \rangle}
\newcommand{\ind}{{\mathbf{1}}}
\newcommand{\e}{\mathrm{e}}
\let\ii\i
\renewcommand{\i}{\mathrm{i}}
\renewcommand{\d}{\mathrm{d}}
\newcommand{\eps}{\varepsilon}
\renewcommand\Re{\operatorname{Re}}
\newcommand{\tildu}{\widetilde{u}}
\newcommand{\tildb}{\widetilde{b}}
\newcommand{\tildg}{\widetilde{g}}
\newcommand{\pdx}[2]{\frac{\partial #1}{\partial x_{#2}}}
\newcommand{\Lop}{\mathcal{L}}
\newcommand{\cl}[1]{\overline{#1}}
\renewcommand\div{\operatorname{div}}
\DeclareMathOperator{\bd}{\partial}
\DeclareMathOperator{\supp}{supp}
\DeclareMathOperator{\essup}{essup}
\DeclareMathOperator{\dist}{d}
\DeclareMathOperator{\diam}{diam}
\DeclareMathOperator{\Rg}{\mathcal{R}}
\DeclareMathOperator{\dom}{\mathcal{D}}
\newcommand{\Lloc}{\L_{\mathrm{loc}}}
\newcommand{\pe}{\perp}
\newcommand{\pa}{\parallel}
\newcommand{\note}[1]{{\color{black} #1}}
\def\Xint#1{\mathchoice
{\XXint\displaystyle\textstyle{#1}}%
{\XXint\textstyle\scriptstyle{#1}}%
{\XXint\scriptstyle\scriptscriptstyle{#1}}%
{\XXint\scriptscriptstyle%
\scriptscriptstyle{#1}}%
\!\int}
\def\XXint#1#2#3{{\setbox0=\hbox{$#1{#2#3}{%
\int}$ }
\vcenter{\hbox{$#2#3$ }}\kern-.6\wd0}}
\def\barint{\,\Xint-} 
\title[Square root of elliptic systems with mixed boundary conditions] {$\L^p$-estimates for the square root of elliptic systems with mixed boundary conditions}
\author{Moritz Egert}
\address{Laboratoire de Math\'{e}matiques d'Orsay, Univ.\ Paris-Sud, CNRS, Universit\'{e} Paris-Saclay, 91405 Orsay, France}
\email{moritz.egert@math.u-psud.fr}
\subjclass[2010]{Primary: 35J47, 47D06, 47A60. Secondary: 42B20, 47B44.}
\date{\today}
\dedicatory{}
\thanks{}
\keywords{Elliptic systems of second order, mixed boundary conditions, Kato square root problem,  $\H^\infty$ functional calculus, Calder\'{o}n-Zygmund decomposition for Sobolev functions, Lam\'e system.}
\begin{document}
\begin{abstract}
This article focuses on $\L^p$-estimates for the square root of elliptic systems of second order in divergence form on a bounded domain. We treat complex bounded measurable coefficients and allow for mixed Dirichlet/Neumann boundary conditions on domains beyond the Lipschitz class. If there is an associated bounded semigroup on $\L^{p_0}$, then we prove that the square root extends for all $p \in (p_0,2)$ to an isomorphism between a closed subspace of $\W^{1,p}$ carrying the boundary conditions and $\L^p$. This result is sharp and extrapolates to exponents slightly above $2$. As a byproduct, we obtain an optimal $p$-interval for the bounded $\H^\infty$-calculus on $\L^p$. Estimates depend holomorphically on the coefficients, thereby making them applicable to questions of (non-autonomous) maximal regularity and optimal control. For completeness we also provide a short summary on the Kato square root problem in $\L^2$ for systems with lower order terms in our setting.
\end{abstract}
\maketitle
\section{Introduction and main results}
\label{Sec: Introduction}

Elliptic divergence form operators are amongst the most carefully studied differential operators with variable coefficients. In this paper we contribute to the functional calculus of such operators with complex, bounded and measurable coefficients, formally given by
\begin{align}
\label{L}
 Lu = - \sum_{i,j=1}^d \pdx{}{i}\Big(a_{ij} \pdx{u}{j} \Big) - \sum_{i=1}^d \pdx{}{i}(a_{i0} u) + \sum_{j=1}^d a_{0j} \pdx{u}{j} + a_{00}u,
\end{align}
on a bounded domain $\Omega \subseteq \R^d$, $d \geq 2$. We allow for mixed boundary conditions. Namely, $u$ satisfies homogeneous Dirichlet boundary conditions on a closed part $D$ of the boundary and natural boundary conditions on the complementary part $N = \bd \Omega \setminus D$. The geometric constellation can be `rough' in that we require Lipschitz coordinate charts for $\bd \Omega$ only around the closure of $N$, whereas around $D$ the domain $\Omega$ can merely be $d$-Ahlfors regular, and $D$ itself has to be $(d-1)$-Ahlfors regular. These notions, henceforth called Assumptions~\ref{N}, \ref{Omega}, and \ref{D}, will be recalled in Section~\ref{Subsec: Geometry}. We include $(m \times m)$-systems in our considerations, that is to say, $u$ takes its values in $\IC^m$ and each $a_{ij}$ is valued in the space of matrices $\Lop(\IC^m)$. As in \cite{HJKR}, we may have different Dirichlet boundary parts for each coordinate of $u$. These assumptions are amongst the most general ones that allow for a proper functional analytic framework for $L$~\cite{HJKR, ABHR, RobertJDE, Brewster}.

As usual, we interpret $L$ in the weak sense via the sesquilinear form 
\begin{align}
\label{a}
\fa(u,v) = \int_\Omega \sum_{i,j=1}^d \bigg(a_{ij} \pdx{u}{j} \cdot \cl{\pdx{v}{i}} + a_{i0} u \cdot \cl{\pdx{v}{i}} + a_{0j} \pdx{u}{j} \cdot \cl{v} + a_{00}u \cdot \cl{v} \bigg) \; \d x
\end{align}
defined on $\dom(\fa) = \IW^{1,2}(\Omega)$, where the subscripted $D$ is reminiscent of the boundary conditions. Ellipticity is in the sense of a G\aa{}rding inequality, turning $L$ into a maximal accretive operator on $\L^2(\Omega)^m$. This way of understanding $L$ is called `Kato's form method'. Definitions are provided in Section~\ref{Subsec: L} and we refer to \cite{Kato, Ouhabaz} for the general background. Let us stress that our setup incorporates, for example, the Lam\'e system. We shall come back to that.

The focus in this paper lies on establishing $\L^p$-estimates for the (unique) maximal accretive square root $L^{1/2}$ of $L$. More precisely, we study for which $p \in (1,\infty)$ it extends or restricts to a topological isomorphism
\begin{align}
\label{Eq: General Isom}
 L^{1/2} : \IW^{1,p}(\Omega) \xrightarrow{\; \cong \;} \L^p(\Omega)^m.
\end{align}
Our results are the first of this kind for `rough' divergence form systems on domains and provide optimal ranges of exponents.

Recent years have witnessed a vast number of applications of property \eqref{Eq: General Isom}. It is key to the approach of Rehberg and collaborators to quasilinear parabolic equations on distribution spaces via maximal regularity techniques originating from~\cite{RobertJDE} and its extensions for example to optimal control problems~\cite{Bonifacius-Neitzel} and quasilinear stochastic evolution equations~\cite{Hornung-Weis}, as well as recent progress on maximal regularity for the non-autonomous Cauchy problem on Lebesgue spaces \cite{Fackler-LpFrac, Fackler-LpRough} and distribution spaces \cite{Disser-terElst-Rehberg}, see also \cite{ADLO13, Dier-Zacher, Achache} for the case $p=2$. Aiming in a slightly different direction, \cite{Disser-terElst-Rehberg2} uses property \eqref{Eq: General Isom} to prove H\"older continuity of solutions to quasilinear parabolic equations in rough domains.

The common idea in all of these applications is that \eqref{Eq: General Isom} allows to switch between Lebesgue spaces and Sobolev spaces as well as their duals by means of an isomorphism that is build from $L$ itself and hence commutes with the latter. This allows one to transfer knowledge between any two of these spaces. Let us give two illustrating examples. If $L$ corresponds to an equation ($m=1$) with real coefficients, then $L$ has a bounded $\H^\infty$-calculus and hence maximal regularity on $\L^p$ for any $p \in (1,\infty)$ due to Gaussian estimates \cite{Duong-Robinson, Arendt-terElst}, and one asks for the same on $\W^{-1,p'}$ to treat distributional right-hand sides in quasilinear equations. In the realm of non-autonomous Cauchy problems, an old result of Lions guarantees non-autonomous maximal regularity on $\W^{-1,2}$ but one wants to transfer this knowledge to $\L^2$ for example to make sense of boundary conditions \cite{ADLO13, Dier-Zacher}. We stress that $L$ itself cannot play the role of this transference operator because in general $\dom(L)$ is not a Sobolev space of order two~\cite{Shamir-Counterexample}.

In the Hilbert space case $p=2$, having \eqref{Eq: General Isom} means having $\dom(L^{1/2}) = \dom(\fa)$ with equivalent norms. If $\fa$ is symmetric -- which here amounts to $a_{ij} = a_{ji}^*$ for all $i,j$ -- this is a property of closed densely defined sectorial forms that has nothing to do with differential operators~\cite[Ch.~VI]{Kato}. The case of non-symmetric forms has a long history and became known as \emph{Kato square root problem}, see for example \cite{Kato-Square-Root-Proof, Asterisque, AT2}. Within our setup it has been settled in \cite{Darmstadt-KatoMixedBoundary, Laplace-Extrapolation} by a non-trivial refinement of the first-order method of Axelsson--Keith--McIntosh proposed in their seminal paper \cite{AKM-QuadraticEstimates} and their pioneering application to mixed boundary value problems in \cite{AKM}. The author has to concede that it is somewhat unfortunate that \cite{Darmstadt-KatoMixedBoundary} and  \cite{Laplace-Extrapolation} treat systems with lower-order terms only implicitly. He sees this paper as the right moment to close this gap and shortly review the underlying methods in Section~\ref{Sec: L2} to prove the following

\begin{theorem}
\label{Thm: Kato}
Suppose $\Omega \subseteq \R^d$ is a bounded domain that satisfies Assumptions~\ref{D}, \ref{N}, and \ref{Omega}. Then $\dom(L^{1/2}) = \dom(a) = \IW^{1,2}(\Omega)$ and 
\begin{align*}
\|L^{1/2} u\|_2^2 \simeq a(u,u) \simeq \|u\|_2^2  + \|\nabla u\|_2^2 \qquad (u \in \IW^{1,2}(\Omega))
\end{align*}
with implicit constants depending on ellipticity, dimensions, and geometry.
\end{theorem}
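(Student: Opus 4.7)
The easier equivalence $\fa(u,u) \simeq \|u\|_2^2 + \|\nabla u\|_2^2$ on $\IW^{1,2}(\Omega)$ is a direct consequence of boundedness and ellipticity of the coefficient matrix: the upper bound is Cauchy--Schwarz, and the lower bound is the G\aa{}rding inequality built into the definition of $\fa$, where the lower-order terms are absorbed by Young's inequality at the cost of an additive multiple of $\|u\|_2^2$. Because shifting $L$ by a large positive constant changes $L^{1/2}$ only by a bounded multiplicative factor with the same domain, I will assume henceforth that $L$ is strictly accretive on $\L^2(\Omega)^m$ when proving the square-root identity.

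The heart of the matter is $\|L^{1/2} u\|_2 \simeq \fa(u,u)^{1/2}$. My plan is to follow the first-order Dirac-type approach of Axelsson--Keith--McIntosh as refined for the present mixed boundary geometry in \cite{Darmstadt-KatoMixedBoundary, Laplace-Extrapolation}. One encodes $L$ as the reduction to the top block of a perturbed bisectorial operator $\Pi_B = \Gamma + B_1 \Gamma^* B_2$ on $\L^2(\Omega)^m \oplus \L^2(\Omega)^{dm}$, with $\Gamma$ carrying $\nabla$ defined on $\IW^{1,2}(\Omega)$ so that the Dirichlet condition on $D$ is built in, and with multiplication operators $B_1, B_2$ assembled from the coefficient matrix so that $\Gamma^* B_2 \Gamma$ reproduces the divergence part of $L$. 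The Kato identity is then equivalent to the quadratic estimate
\begin{equation*}
\int_0^\infty \|t \Pi_B (1 + t^2 \Pi_B^2)^{-1} u\|_2^2 \, \frac{\d t}{t} \simeq \|u\|_2^2 \qquad \bigl(u \in \cl{\Rg(\Pi_B)}\bigr),
\end{equation*}
whose proof in the mixed-boundary setting is precisely what \cite{Darmstadt-KatoMixedBoundary} establishes for the principal part of $L$ via off-diagonal $\L^2$-bounds on the resolvent, a Carleson measure argument based on a $T(b)$-type test, and a stopping-time Calder\'on--Zygmund decomposition of Sobolev functions adapted to Ahlfors regular $D$ and the Lipschitz charts near $\cl{N}$. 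I would import this step essentially as a black box.

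What remains, and what is not made explicit in those references, is to absorb the lower-order coefficients $a_{i0}$, $a_{0j}$, $a_{00}$. I would handle them by a perturbation argument at the form level: after the accretivity shift from the first paragraph the lower-order part of $\fa$ is relatively form-bounded with respect to the principal-part form $\fa_0$ with arbitrarily small relative bound, and a classical square-root-stability result in the vein of McIntosh then transfers $\dom(L_0^{1/2}) = \dom(\fa_0)$ from $L_0$ to $L$. I expect the main obstacle to be verifying that this perturbation preserves not only the domain identity but also the quantitative quadratic estimates --- and hence the holomorphic dependence on the coefficients that is advertised in the introduction --- in the present generality of complex $(m\times m)$-systems and irregular geometry. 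An alternative I would keep in reserve is to enlarge the matrix $B$ so that the zero-order and first-order interactions appear directly as additional entries and to re-run the Axelsson--Keith--McIntosh hypotheses on the enlarged $\Pi_B$; this avoids invoking external perturbation theorems but duplicates the technical work of \cite{Darmstadt-KatoMixedBoundary}.
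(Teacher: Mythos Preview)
Your proposal is correct, and the paper explicitly acknowledges your primary route---the form-level perturbation argument for lower-order terms in the spirit of \cite[Ch.~0.7]{Asterisque}---as a valid alternative. However, the paper itself takes what you describe as your ``alternative in reserve'': it enlarges the Hilbert space to the \emph{three}-fold product $\cH = \L^2(\Omega)^m \times \L^2(\Omega)^m \times \L^2(\Omega)^{dm}$ (not the two-fold one you wrote) and builds $\Gamma$, $B_1$, $B_2$ so that the full coefficient matrix $A$---including $a_{00}$, $a_{i0}$, $a_{0j}$---sits inside $B_2$, whence the full operator $L$ with all lower-order terms appears directly as the top-left block of $\Pi_B^2$. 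The quadratic estimate for this enlarged $\Pi_B$ is then imported from \cite{Laplace-Extrapolation} after noting that the abstract hypotheses (H1)--(H7) there have already been verified in \cite{Laplace-Extrapolation, Darmstadt-KatoMixedBoundary}; no separate perturbation step is needed. The payoff of the paper's choice is that the constants in the quadratic estimate, and hence in the Kato equivalence, depend transparently on ellipticity, dimensions, and geometry---which feeds directly into the holomorphic-dependence results later---whereas your perturbation route would require tracking constants through an external square-root stability theorem. A minor point: in the paper's setup Assumption~\ref{Ass: Ellipticity} already asserts $\Re \fa(u,u) \geq \lambda(\|u\|_2^2 + \|\nabla u\|_2^2)$, so no accretivity shift is needed and the equivalence $\fa(u,u) \simeq \|u\|_{\IW^{1,2}}^2$ is immediate.
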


Another possibility would have been to adapt the perturbation argument of \cite[Ch.~0.7]{Asterisque}. Here, $d$ and the number of equations $m$ in \eqref{L} are referred to as \emph{dimensions}. The constants $\lambda$ and $\Lambda$ are the lower and upper bounds for the sesquilinear form in \eqref{a} and will be defined in Section~\ref{Subsec: L}. They are referred to as \emph{ellipticity}. \emph{Geometry} refers to all constants implicit in those assumptions amongst \ref{D}, \ref{N}, and \ref{Omega} that are used in the particular situation.

The literature with regard to $p \neq 2$ is much sparser. Pure Dirichlet and pure Neumann boundary conditions on a Lipschitz domain were first treated in \cite{ATp} under size and regularity assumptions on the kernel of the semigroup generated by $-L$. This applies in particular to equations with real coefficients. Auscher--Badr--Haller-Dintelmann--Rehberg~\cite{ABHR} have more recently established \eqref{Eq: General Isom} in the range $p \in (1,2)$ also for mixed boundary conditions on making the same geometric assumptions \ref{D}, \ref{N}, and \ref{Omega}. There are, however, no competing results available -- even on more regular domains -- if one is to consider mixed boundary conditions for operators with complex coefficients, let alone systems. The only exception is the case $d=1=m$, where \eqref{Eq: General Isom} is known to hold for all $p \in (1,\infty)$ and all common two-point boundary conditions on an open interval~\cite{Auscher-McIntosh-Nahmod}. 

A coherent treatment of $\L^p$-estimates for the square root of elliptic systems with complex coefficients on $\R^d$ when $d \geq 2$ is found in the monograph~\cite{Riesz-Transforms}. On a large scale our approach is to incorporate the machinery of off-diagonal estimate from \cite{Riesz-Transforms} into the fine geometric setup of \cite{ABHR} to compensate for the lack of Gaussian upper bounds for the kernel of the semigroup. (In fact, there might not be a kernel in any but the distributional sense.) This was left as an open problem on p.66 in \cite{Riesz-Transforms}. 

We shall work in an $L$-adapted range of exponents
\begin{align}
\label{JL}
 \cJ(L) := \Big \{p \in (1,\infty): \sup_{t > 0} \|\e^{-tL}\|_{\L^p \to \L^p} < \infty \Big \},
\end{align}
that is to say, those $p \in (1,\infty)$ for which there is a bounded (strongly continuous) semigroup associated with $-L$ on $\L^p(\Omega)^m$. We postpone a detailed discussion of the size of $\cJ(L)$ and mention for the moment only that $\cJ(L)$ is an interval that contains at least $2_*:= 2d/(d+2)$ and $2^*:=2d/(d-2)$. An obvious advantage of working with $\cJ(L)$ is that any improvement on $\L^p$ boundedness of the semigroup entails an improvement in our results for free. 

As our main result we obtain \eqref{Eq: General Isom} in the best possible range below $2$ (except for maybe the endpoints). For the sake of clarity let us introduce the array
\begin{align}
\label{Jnorm}
 \Jnorm{p} := \Big[d,m,\lambda, \Lambda, \sup_{t > 0} \|\e^{-tL}\|_{\L^p \to \L^p} \Big]
\end{align}
to store all the constants that usually show up in our estimates.

\begin{theorem}
\label{Thm: Square root solution Lp}
Suppose $\Omega \subseteq \R^d$ is a bounded domain satisfying \ref{D}, \ref{N}, and \ref{Omega}.

\begin{enumerate}
 \item \label{i Square root solution Lp} If $p_0 \in \cJ(L) \cap [1,2)$, then $L^{1/2}$ extends to an isomorphism $\IW^{1,p}(\Omega) \to \L^p(\Omega)^m$ for every $p \in (p_0,2)$. Upper and lower bounds of the extension depend on $\Jnorm{p_0}$, $p$, and geometry.
 \item \label{ii Square root solution Lp} The result in \eqref{i Square root solution Lp} is optimal in that if $L^{1/2}$ extends to an isomorphism $\IW^{1,p}(\Omega) \to \L^p(\Omega)^m$ for some $p \in [1,2)$, then already $(p,2) \subseteq \cJ(L)$.
\end{enumerate}
\end{theorem}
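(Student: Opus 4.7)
\emph{Strategy for part (i).} The plan is to bootstrap from the $\L^2$ Kato equivalence in Theorem~\ref{Thm: Kato} via the two familiar integral representations
\[
L^{1/2} u = c\int_0^\infty \sqrt{t}\,L\e^{-tL} u\,\frac{\d t}{t}, \qquad \nabla L^{-1/2} f = c\int_0^\infty \sqrt{t}\,\nabla \e^{-tL} f\,\frac{\d t}{t},
\]
and to establish separately the two inequalities (a) $\|L^{1/2} u\|_p \lesssim \|u\|_{\IW^{1,p}}$ and (b) $\|\nabla L^{-1/2} f\|_p \lesssim \|f\|_p$ on dense subspaces, which together with the $\L^2$ Kato equivalence yield both the upper and the lower bound of the isomorphism. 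The three workhorses are: $\L^2$ off-diagonal (Davies--Gaffney) estimates for the families $\{\e^{-tL}\}$, $\{tL\e^{-tL}\}$, $\{\sqrt{t}\,\nabla \e^{-tL}\}$, which only use the Gårding inequality and are available for our $L$ (cf.~\cite{Laplace-Extrapolation}); the Calderón--Zygmund decomposition for Sobolev functions compatible with the Dirichlet part $D$, developed in \cite{ABHR}; and the real-variable extrapolation machinery of Auscher for operators with $\L^2$ off-diagonal decay from \cite{Riesz-Transforms}.

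\emph{Implementing (i).} For (b), I would decompose $f \in \L^p \cap \L^2$ at level $\alpha$ by a standard Calderón--Zygmund decomposition into $f = g + \sum_i b_i$ and split the time integral at the scale $r_i^2$ of each bad cube $Q_i$. The hypothesis $p_0 \in \cJ(L)$ enters precisely here, controlling the total $\L^{p_0}$-mass of $\sum_i \e^{-tL} b_i$, while the off-diagonal decay of $\sqrt{t}\,\nabla\e^{-tL}$ handles the spatial cross terms. This yields a weak-type $(p_0, p_0)$ bound for $\nabla L^{-1/2}$, and Marcinkiewicz interpolation against the $\L^2$-bound produces the $\L^p$-bound for every $p \in (p_0, 2)$. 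For (a), the dual move is to apply the Sobolev Calderón--Zygmund decomposition of \cite{ABHR} to $u \in \IW^{1,p}$, writing $u = g + \sum_i b_i$ with $g$ Lipschitz (after suitable boundary modification) and each $b_i$ supported on a Whitney-type cube respecting $D$, with controlled oscillation and vanishing appropriate boundary trace. The good part is then estimated by the $\L^2$ Kato bound, the bad sum by off-diagonal kernels combined with the $\L^{p_0}$ semigroup hypothesis, again culminating in a weak-type bound that interpolates with the $\L^2$ estimate. Density of $\IW^{1,2} \cap \IW^{1,p}$ in $\IW^{1,p}$ (resp.\ $\L^2 \cap \L^p$ in $\L^p$), standard under Assumptions~\ref{D}, \ref{N}, \ref{Omega}, then produces the full isomorphism.

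\emph{Part (ii) and the main obstacle.} For the optimality, assume $L^{1/2}: \IW^{1,p}(\Omega) \to \L^p(\Omega)^m$ is an isomorphism for some $p \in [1,2)$. Complex interpolation against the $\L^2$-isomorphism gives the same statement at every $q \in (p, 2)$, so $\nabla L^{-1/2}$ is bounded on each such $\L^q$, and by the embedding $\IW^{1,q}(\Omega) \hookrightarrow \L^q(\Omega)^m$ available on our bounded domain, so is $L^{-1/2}$. The identity $\nabla\e^{-tL} = (\nabla L^{-1/2})\,(L^{1/2}\e^{-tL})$ then combined with the $\L^2$ off-diagonal bounds on $\sqrt{t}\,L^{1/2}\e^{-tL}$ and the unconditional $\L^{2_*}$-boundedness of the semigroup yields, after an extrapolation in the spirit of \cite[Ch.~4]{Riesz-Transforms}, uniform $\L^q$-bounds on $\e^{-tL}$ itself, so $(p, 2) \subseteq \cJ(L)$. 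The principal obstacle, running through the whole theorem, is to push the $\R^d$-style Calderón--Zygmund machinery of \cite{Riesz-Transforms} through at the Sobolev level in the mixed-boundary, rough-domain setting: the bad pieces of the decomposition have to simultaneously respect the Dirichlet trace on $D$, enjoy good oscillation control, and be compatible with the off-diagonal kernels of $\e^{-tL}$, which is exactly where the Ahlfors-regularity of $D$ and the Lipschitz-chart structure near $\overline{N}$ enter the picture.
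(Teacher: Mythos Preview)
Your outline for part~(i) is essentially the paper's own route: the two estimates $\|L^{1/2}u\|_p \lesssim \|u\|_{\IW^{1,p}}$ and $\|\nabla L^{-1/2}f\|_p \lesssim \|f\|_p$ are established separately, the former via the Sobolev Calder\'on--Zygmund decomposition and a weak-type argument (Proposition~\ref{Prop: Weak type estimate square root}), the latter via off-diagonal estimates and the Blunck--Kunstmann criterion (Lemma~\ref{lem: OD implies Riesz transform}), followed by real interpolation against the $\L^2$ result. Your use of a direct Calder\'on--Zygmund decomposition of $f$ for the Riesz transform rather than the abstracted criterion of Proposition~\ref{Prop: BK Theorem} is a cosmetic difference only.

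Part~(ii), however, has a genuine gap. From the isomorphism $L^{1/2}:\IW^{1,q}\to\L^q$ you extract only $L^{-1/2}:\L^q\to\L^q$ via the embedding $\IW^{1,q}\hookrightarrow\L^q$, which gains nothing; and then your factorisation $\nabla\e^{-tL}=(\nabla L^{-1/2})(L^{1/2}\e^{-tL})$ requires $\L^q$-bounds on $L^{1/2}\e^{-tL}$, which would follow from a bounded $\H^\infty$-calculus on $\L^q$ --- but that is precisely equivalent (up to endpoints) to $q\in\cJ(L)$, the very conclusion you are after. The vague appeal to ``extrapolation in the spirit of \cite[Ch.~4]{Riesz-Transforms}'' does not close this circle.

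The missing idea is the \emph{gain of integrability}: you should use the Sobolev embedding $\IW^{1,q}\hookrightarrow\L^{q^*}$ rather than $\IW^{1,q}\hookrightarrow\L^q$, so that $L^{-1/2}:\L^q\to\L^{q^*}$ is bounded for every $q\in[p,2]$. Iterating along $p_0=p$, $p_{j}=p_{j-1}^*$ reaches the interval $(2_*,2]$ in finitely many steps, where the semigroup is already known to be bounded (Theorem~\ref{Thm: Range for Lp realization}). Writing
\[
\e^{-tL}=\big(L^{j/2}\e^{-tL/2}\big)\circ\e^{-tL/2}\circ\big(L^{-1/2}\big)^{j}
\]
then exhibits $\e^{-tL}$ as a composition $\L^{p}\to\L^{p_j}\to\L^2\to\L^2$ with the correct $t$-power, i.e.\ $\L^p\to\L^2$ boundedness of the semigroup; Proposition~\ref{Prop: Hypercontractivity} converts this back into $\L^q$-boundedness for $q\in(p,2)$.
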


We note that Theorem~\ref{Thm: Square root solution Lp}.\eqref{i Square root solution Lp} in principle consists of two estimates. One is the $\L^p$ boundedness of the Riesz transform $\nabla L^{-1/2}$ expressed in the domination
\begin{align*}
 \|\nabla u\|_p \lesssim \|L^{1/2} u\|_p \qquad (u \in \dom(L^{1/2})).
\end{align*}
In Section~\ref{Sec: Riesz} we present details of the (short) proof relying on Blunck--Kunstmann's weak type criterion from \cite{BK, BK2}, see also \cite{Riesz-Transforms}, and $\L^p \to \L^2$ off-diagonal estimates for the gradient of the semigroup, to be established in Section~\ref{Sec: Properties of the semigroup}. This is only for the reader's convenience and to avoid overloading this article with indications on how to modify and extract additional information from \cite{BK2, Riesz-Transforms}. Indeed, these references both treat the case $\Omega = \R^d$ only and do not state an explicit dependence of implicit constants. But we do not claim much originality here. An interesting observation to keep in mind, however, is that for this part we shall not require Assumption~\ref{Omega}, that is, $\Omega$ with the restricted Lebesgue measure need not be a space of homogeneous type.

The other ingredient is the \emph{a priori} inequality
\begin{align*}
 \|L^{1/2} u\|_p \lesssim \|u\|_p + \|\nabla u \|_p \qquad (u \in \IW^{1,2}(\Omega)).
\end{align*}
This will be handled in Section~\ref{Sec: Reverse estimates for the Riesz transform} by a careful modification of the main argument in \cite{ABHR}. It goes by a weak-type criterion and requires a new Calder\'on-Zygmund decomposition that we shall establish beforehand in Section~\ref{Sec: CZ decomposition}.

We remind the reader that most aforementioned applications of \eqref{Eq: General Isom} would invest this isomorphism along with maximal regularity on $\L^p$. The latter is not known \emph{a priori} in our context but there is a way out: By Dore--Venni's theorem~\cite{Dore-Venni} maximal regularity on $\L^p$ follows from the bounded $\H^\infty$-calculus on $\L^p$ and this in turn is an easy consequence of methods used to get a grip on the Riesz transform, namely $\L^p \to \L^2$ off-diagonal estimates. Again, this connection is not new in general but has not been exploited in our context. It goes back to the seminal contribution \cite{BK} of Blunck and Kunstmann.

\begin{theorem}
\label{Thm: Hinfty in Lp}
Suppose $\Omega \subseteq \R^d$ is a bounded domain that satisfies Assumption~\ref{N} and let $p_0 \in \cJ(L)$. If $f$ is bounded and holomorphic on the sector $\S^+_\psi$, where $\psi \in (\omega, \pi)$, then for all $p \in (p_0,2) \cup (2, p_0)$,
\begin{align}
\label{Eq: Hinfty in Lp}
 \|f(L)u\|_p \leq C \|f\|_\infty \|u\|_p \qquad (u \in \L^2(\Omega)^m \cap \L^p(\Omega)^m).
\end{align}
Moreover, $C$ depends on $\psi$, $\Jnorm{p_0}$, $p$, and geometry. 

The above range for $p$ is optimal in that \eqref{Eq: Hinfty in Lp} for some $p \in (1,\infty)$, some $\psi \in (\omega, \pi)$, and all $f$ as above implies $p \in \cJ(L)$.
\end{theorem}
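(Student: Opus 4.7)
The plan is to combine McIntosh's $\L^2$ square-function theorem with Blunck--Kunstmann extrapolation, in the same spirit as the Riesz transform discussion above. First, since $L$ is defined via the closed sectorial form $\fa$ of angle $\omega < \pi/2$, the classical McIntosh theorem (as presented e.g.\ in \cite[Ch.~0.6]{Asterisque} or \cite{Kato}) gives a bounded $\H^\infty(\S^+_\psi)$-calculus on $\L^2(\Omega)^m$ for every $\psi \in (\omega, \pi)$, with operator norm bounded by a constant depending on $\psi, \lambda, \Lambda, d, m$ times $\|f\|_\infty$. This settles the case $p = 2$ and provides the seed for extrapolation.

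To handle $p_0 < 2$, the plan is to invoke the Blunck--Kunstmann criterion \cite{BK, BK2} in the following form: if $T$ is an operator on $\L^2(\Omega)^m$ obtained from the bounded $\H^\infty$-calculus of $L$, and if $\{\e^{-tL}\}_{t>0}$ satisfies $\L^{p_0} \to \L^2$ Davies--Gaffney off-diagonal estimates (with constants controlled by $\Jnorm{p_0}$), then $T$ is of weak type $(p_0, p_0)$ with constant depending on $\|T\|_{\L^2 \to \L^2}$ and the off-diagonal constants. The requisite off-diagonal estimates are produced in Section~\ref{Sec: Properties of the semigroup}, so applying the criterion to $T = f(L)$ and Marcinkiewicz interpolating against the $\L^2$-bound yields \eqref{Eq: Hinfty in Lp} for every $p \in (p_0, 2)$. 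The key structural observation is that the Calder\'on--Zygmund decomposition underlying the criterion is carried out on $\R^d$ after zero-extension from $\Omega$, which is precisely why Assumption~\ref{Omega} is not needed and only \ref{N} enters in the hypothesis.

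The case $p_0 > 2$ I would reduce to the previous one by duality. The adjoint $L^*$ is governed by the form $\fa^*(u,v) = \cl{\fa(v,u)}$, fits into the same framework with transposed lower-order terms, and satisfies $\cJ(L^*) = \{p' : p \in \cJ(L)\}$; in particular $p_0' \in \cJ(L^*) \cap (1,2)$. The previous step applied to $L^*$ yields \eqref{Eq: Hinfty in Lp} for $L^*$ on $\L^q$ with $q \in (p_0', 2)$, and the standard identity $f(L)^* = \cl{f}(L^*)$ then transfers the estimate back to $L$ on $\L^p$ for $p \in (2, p_0)$.

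For the converse, I would test \eqref{Eq: Hinfty in Lp} against $f_t(z) := \e^{-tz}$, which belongs to $\H^\infty(\S^+_\psi)$ with $\|f_t\|_\infty = 1$ for any $\psi < \pi/2$. Since $f_t(L) = \e^{-tL}$, the assumed uniform bound forces $\sup_{t > 0} \|\e^{-tL}\|_{\L^p \to \L^p} < \infty$, that is, $p \in \cJ(L)$. The global architecture of the proof is by now fairly standard; the main obstacle will not be the scheme itself, but rather the meticulous tracking of how every constant throughout the Blunck--Kunstmann machinery depends on $\Jnorm{p_0}$ alone, since the original references state bounds only qualitatively and only on the full space $\R^d$.
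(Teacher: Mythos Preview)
Your plan is correct and matches the paper's approach: bounded $\H^\infty$-calculus on $\L^2$ (the paper cites Crouzeix--Delyon rather than McIntosh, but either works), Blunck--Kunstmann extrapolation via $\L^q\to\L^2$ off-diagonal estimates for the semigroup below $2$, duality for $p_0>2$, and testing with $f(z)=\e^{-tz}$ for optimality. The only substantive step you leave implicit is how to verify the Blunck--Kunstmann hypothesis \eqref{Eq: BK Condition 1} for $T(1-A_r)$ with $A_r=1-(1-\e^{-r^2L})^n$: the paper does this by rewriting $f(L)(1-\e^{-r^2L})^n$ as a contour superposition of semigroup operators via a Laplace-inversion identity (Lemma~\ref{Lem: Functional calculus via Laplace transform}), and note also that $p_0\in\cJ(L)$ only yields $\L^q\to\L^2$ off-diagonal decay for $q$ strictly between $p_0$ and $2$ (Proposition~\ref{Prop: Hypercontractivity}\eqref{iii Hypercontractivity}), so the weak-type bound lands at an intermediate exponent rather than at $p_0$ itself.
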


Above, $\omega \in [0, \pi/2)$ is the angle of accretivity for $L$ and $\S^+_\psi$ is the open sector in the complex plane of opening angle $2\psi$ symmetric around the positive real axis, see Section~\ref{Subsec: L}.

As far as $L^{1/2}$ is concerned, we have only dealt with exponents $p<2$ but a duality argument allows us to extrapolate \eqref{Eq: General Isom} to some exponents above $2$. In Section~\ref{Sec: Extended square root solution Lp} we present a proof of

\begin{theorem}
\label{Thm: Extended square root solution Lp}
Suppose $\Omega \subseteq \R^d$ is a bounded domain satisfying Assumptions~\ref{D}, \ref{N}, and \ref{Omega}. There exists $\eps(L) > 0$ such that $L^{1/2}$ restricts to an isomorphism $\IW^{1,p}(\Omega) \to \L^p(\Omega)^m$ for every $p \in [2,2+\eps(L))$. Moreover, $\eps(L)$ depends on ellipticity, dimensions, and geometry. Upper and lower bounds of the restriction import at most an additional dependence on $p$.
\end{theorem}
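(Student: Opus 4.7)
The plan is to derive the range $p \in [2, 2+\eps(L))$ by Banach-space duality applied to Theorem~\ref{Thm: Square root solution Lp} for the adjoint $L^*$, combined with a Meyers-Gehring $\L^p$-extension of the Lax-Milgram isomorphism for $L$ itself. The first ingredient relies on the observation that $L^*$ falls into the same class as $L$ (with coefficients $a_{ji}^*$, the same form domain $\IW^{1,2}(\Omega)$, and the same ellipticity constants), so Theorem~\ref{Thm: Square root solution Lp} applies to $L^*$ verbatim. Since $2_* = 2d/(d+2) \in \cJ(L^*) \cap [1,2)$ by the universal inclusion recalled after~\eqref{JL}, this yields an isomorphism $(L^*)^{1/2}: \IW^{1,q}(\Omega) \to \L^q(\Omega)^m$ for every $q \in (2_*, 2)$.

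The next step is to take the Banach-space adjoint and identify it with $L^{1/2}$ through the form identity $\fa(u,w) = (L^{1/2} u, (L^*)^{1/2} w)_{\L^2}$ on $\IW^{1,2}(\Omega)$, itself a consequence of the Hilbert-space relation $(L^{1/2})^* = (L^*)^{1/2}$ together with density of $\dom(L)$ in $\IW^{1,2}(\Omega)$ granted by Theorem~\ref{Thm: Kato}. This delivers, for every $p \in (2, 2^*)$, an isomorphism $L^{1/2}: \L^p(\Omega)^m \to (\IW^{1,p'}(\Omega))^*$. In parallel, the Lax-Milgram isomorphism $L: \IW^{1,2}(\Omega) \to (\IW^{1,2}(\Omega))^*$ -- immediate from G\aa{}rding's inequality, up to a harmless shift if $L$ is not already boundedly invertible -- self-improves through a Meyers-Gehring reverse H\"{o}lder argument to some $\eta(L) > 0$, depending only on ellipticity, dimensions, and geometry, such that $L: \IW^{1,p}(\Omega) \to (\IW^{1,p'}(\Omega))^*$ is an isomorphism for all $p \in (2-\eta(L), 2+\eta(L))$.

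Setting $\eps(L) := \min\{\eta(L), 2^*-2\}$, both isomorphisms coexist for $p \in [2, 2+\eps(L))$. The functional-calculus identity $L^{1/2} = L^{-1/2} \circ L$ on $\dom(L)$ exhibits the restriction of $L^{1/2}$ to $\IW^{1,p}(\Omega)$ as the composition of the Meyers isomorphism with the inverse of the duality isomorphism, hence as an isomorphism onto $\L^p(\Omega)^m$; agreement with the $\L^2$-defined $L^{1/2}$ follows on the dense subspace $\dom(L)$ and extends by the continuity of both maps into $\L^2(\Omega)^m$. The main obstacle is the Meyers-Gehring step: the self-improvement of the gradient estimate must be carried out for systems with the lower-order terms of~\eqref{L} and mixed boundary conditions on a domain only $d$-Ahlfors regular near the Dirichlet part $D$, while keeping $\eta(L)$ uniform in the specified class -- a classical phenomenon near Lipschitz portions of $\bd\Omega$, but one that requires care in the rough geometry allowed by Assumption~\ref{D}.
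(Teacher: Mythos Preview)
Your proposal is correct and follows essentially the same route as the paper: both combine the $\L^{p'}$-isomorphism for $(L^*)^{1/2}$ coming from Theorem~\ref{Thm: Square root solution Lp} applied to $L^*$ with the extrapolated Lax--Milgram isomorphism $\Lop:\IW^{1,p}(\Omega)\to\IW^{-1,p}(\Omega)$, linked via the duality identity $\fa(u,w)=\scal{L^{1/2}u}{(L^*)^{1/2}w}$ (equivalently, the paper's Lemma~\ref{Lem: Duality formula}). Your factorization $L^{1/2}=T^{-1}\circ\Lop$ with $T$ the Banach-space adjoint of $(L^*)^{1/2}$ is just a repackaging of the paper's separate boundedness and surjectivity arguments.

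One remark on the ``main obstacle'' you single out: the paper does not prove the $\IW^{1,p}\to\IW^{-1,p}$ isomorphism by a Meyers--Gehring reverse H\"older argument but rather invokes Proposition~\ref{Prop: Domain is W1p}, which is the result of \cite[Thm.~6.2]{HJKR} obtained via {\v S}ne{\u\i}berg's stability theorem for complex interpolation scales. In the rough geometry of Assumption~\ref{D} (only $d$-Ahlfors regularity near $D$, no Lipschitz charts) the {\v S}ne{\u\i}berg route is the one known to go through with the required uniformity of constants; a direct Caccioppoli/reverse-H\"older approach would need additional justification there. Since you correctly flag this step as the delicate one and the needed statement is exactly Proposition~\ref{Prop: Domain is W1p}, the argument closes. (Also note that $L$ is invertible outright by Lemma~\ref{Lem: Expontial stability}, so no shift is needed.)
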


We did not try to find a characterisation of the admissible exponents $p>2$ in \eqref{Eq: General Isom} in terms of the semigroup (or rather its gradient $\sqrt{t} \nabla \e^{-tL}$) as in \cite{Riesz-Transforms}. This is left as an independent open problem. However, already for the Riesz transform no exponent $p>2$ works simultaneously for all real symmetric $L$ subject to Dirichlet boundary conditions on a Lipschitz domain. This follows from combining the example on p.120 in \cite{Asterisque} with \cite[Thm.~B]{Shen-Riesz}.

The reader will have already noticed that we take special care of implicit constants. This is \emph{not} because we are trying to be particularly pedantic or even annoying. Rather than that, we need to prepare our results for the aforementioned applications to non-autonomous parabolic problems, where a family of operators $L_t$ with uniform ellipticity parameters in $t$ acts in the spatial variables, and one needs the same uniformity on estimates for $L_t^{1/2}$. This was asked for in \cite{Fackler-LpFrac, Fackler-LpFrac, Hornung-Weis, Bonifacius-Neitzel}. It is only implicit in \cite{ABHR} and sometimes all but impossible to track. We shall comment on that issue in Section~\ref{Sec: Reverse estimates for the Riesz transform}. There is also an `inner' application for having uniform constants with respect to ellipticity, as it allows us to obtain holomorphic dependence on the coefficients in all of our estimates by a simple application of Vitali's theorem from complex analysis. To this end, let us denote by $\cA(\Omega)$ the set of coefficient functions 
\begin{align*}
 A=(a_{i,j})_{0 \leq i,j \leq d}: \Omega \to \Lop(\IC^m)^{(d+1) \times (d+1)}
\end{align*}
that satisfy our ellipticity assumptions, which can canonically be identified with an open subset of $\L^\infty(\Omega)^{m^2(d+1)^2}$. We shall say that a function $h$ from some open subset of $\cA(\Omega)$ into a complex Banach space $X$ is holomorphic if for all holomorphic functions $\varphi$ from an open subset of $\IC$ into $\cA(\Omega)$ the composition $h \circ \varphi$ is holomorphic. Let us also write $L_A$ instead of $L$ and so on to stress the dependence on $A$.

\begin{theorem}
\label{Thm: Holomorphy}
Suppose $\Omega \subseteq \R^d$ is a bounded domain and let $O \subseteq \cA(\Omega)$ be open.
\begin{enumerate}
 \item\label{Holo1} If $\sup_{ A \in O} \sup_{t>0} \|\e^{-tL_A}\|_{\L^{p_0} \to \L^{p_0}} < \infty$ and $\psi> \sup_{A \in O} \omega(L_A)$, then in the setting and with the notation of Theorem~\ref{Thm: Hinfty in Lp} for every $f \in \H^\infty(\S^+_\psi)$ the function
 \begin{align*}
  O \to \Lop(\L^p(\Omega)^m), \quad A \mapsto f(L_A)
 \end{align*}
 is holomorphic on $O$.
 \item\label{Holo2} If $\sup_{ A \in O} \sup_{t>0} \|\e^{-tL_A}\|_{\L^{p_0} \to \L^{p_0}} < \infty$, then in the setting and with the notation of Theorem~\ref{Thm: Square root solution Lp} the function
 \begin{align*}
  O \to \Lop(\IW^{1,p}(\Omega), \L^p(\Omega)^m), \quad A \mapsto L_A^{1/2}
 \end{align*}
 is holomorphic on $O$. The same holds for $p \in \bigcap_{A \in O} [2,2+\eps(L_A))$ in the setting of Theorem~\ref{Thm: Extended square root solution Lp}.
\end{enumerate}
\end{theorem}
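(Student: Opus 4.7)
The plan is to reduce everything to Vitali's theorem, as foreshadowed by the remark preceding the statement. By the very definition of holomorphy on $O$, it suffices to fix a holomorphic curve $\varphi : V \to O$ on an open $V \subseteq \IC$ and prove that $z \mapsto f(L_{\varphi(z)})$ and $z \mapsto L_{\varphi(z)}^{1/2}$ are operator-norm holomorphic on $V$, taking values in $\Lop(\L^p(\Omega)^m)$ and $\Lop(\IW^{1,p}(\Omega), \L^p(\Omega)^m)$ respectively.

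First I would establish uniform operator-norm bounds on every compact $K \subset V$. The ellipticity constants $\lambda, \Lambda$ and the $\L^\infty$-bounds on the coefficients depend continuously on $A \in \cA(\Omega)$, hence are uniformly controlled on the compact set $\varphi(K)$; together with the hypothesis $\sup_{A \in O} \sup_{t>0} \|\e^{-tL_A}\|_{\L^{p_0} \to \L^{p_0}} < \infty$ this bounds $\Jnorm{p_0}$ uniformly along $\varphi(K)$. The explicit constant tracking in Theorems~\ref{Thm: Hinfty in Lp}, \ref{Thm: Square root solution Lp}, and \ref{Thm: Extended square root solution Lp} then yields the desired uniform operator-norm bounds on $K$. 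This step is precisely the payoff for the care taken throughout the paper to express implicit constants through $\Jnorm{p_0}$.

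Next I would establish weak holomorphy in $\L^2(\Omega)^m$. Because $\fa_A(u,v)$ is affine in $A$ for fixed $u, v \in \IW^{1,2}(\Omega)$, Kato's theory of type (B) holomorphic families of sesquilinear forms yields norm-holomorphy of the resolvent $z \mapsto (\zeta - L_{\varphi(z)})^{-1}$ in $\Lop(\L^2(\Omega)^m)$, locally uniformly for $\zeta$ outside a common sector containing all $\omega(L_{\varphi(z)})$ (which exists thanks to the uniform ellipticity bounds from step one). Inserting this into the Dunford-Riesz integral
\begin{equation*}
 f(L_A) = \frac{1}{2 \pi \i} \int_{\bd \S_\theta^+} f(\zeta)\, (\zeta - L_A)^{-1} \, \d \zeta,
\end{equation*}
for a fixed $\theta \in (\omega, \psi)$, followed by an approximation argument using McIntosh's convergence lemma to pass from $\H^\infty_0$ to general $f \in \H^\infty(\S_\psi^+)$, gives scalar holomorphy of $z \mapsto \langle f(L_{\varphi(z)}) u, v \rangle$ for every $u, v \in \L^2(\Omega)^m$. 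An analogous analysis via $L_A^{1/2} u = \frac{1}{\pi} \int_0^\infty t^{-1/2} L_A (t + L_A)^{-1} u \, \d t$, valid for $u \in \dom(L_A^{1/2}) = \IW^{1,2}(\Omega)$ by Theorem~\ref{Thm: Kato}, handles the square root.

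Finally I would pass from $\L^2$ to the $\L^p$ and $\IW^{1,p}$ settings. Since $\L^2(\Omega)^m \cap \L^{p'}(\Omega)^m$ is dense in $\L^{p'}(\Omega)^m$ for bounded $\Omega$ in both regimes $p < 2$ and $p > 2$, and the $\L^p$--$\L^{p'}$ duality agrees with the $\L^2$-inner product on this overlap, the weak holomorphy from the previous step transfers to weak holomorphy of $z \mapsto f(L_{\varphi(z)})$ in $\Lop(\L^p(\Omega)^m)$; combined with the uniform norm bounds from step one, the standard vector-valued Vitali principle (locally norm-bounded plus weakly holomorphic implies norm-holomorphic) upgrades this to operator-norm holomorphy. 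The argument for $L^{1/2}$ is identical once the input $u$ is taken in a dense subset of $\IW^{1,p}(\Omega)$ that lies inside $\IW^{1,2}(\Omega)$. The main technical obstacle is precisely this last density matching: for $p \geq 2$ the inclusion $\IW^{1,p}(\Omega) \subseteq \IW^{1,2}(\Omega)$ renders it automatic, while for $p < 2$ one must invoke the density of smooth functions vanishing near $D$ in $\IW^{1,p}(\Omega)$, which is available under Assumptions~\ref{D} and \ref{Omega}.
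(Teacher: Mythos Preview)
Your proposal is correct and follows essentially the same three-step strategy as the paper: establish $\L^2$-holomorphy via resolvent holomorphy and the convergence lemma (done in Corollaries~\ref{Cor: Functional calculus L L2} and \ref{Cor: Square root holomorphic L2}), invoke the uniform $\L^p$-bounds coming from the constant tracking in Theorems~\ref{Thm: Hinfty in Lp}, \ref{Thm: Square root solution Lp}, \ref{Thm: Extended square root solution Lp}, and then transfer to $\L^p$ by density together with the weak--strong principle for vector-valued holomorphic functions. The only difference is cosmetic: the paper packages the final transfer step as a reusable abstract lemma (Lemma~\ref{Lem: Holomorphic extension}) for pairs of compatible Banach spaces with reflexive target, whereas you carry it out by hand using the concrete density of $\L^2 \cap \L^{p'}$ in $\L^{p'}$ and of $\C_D^\infty$-functions in $\IW^{1,p}(\Omega)$.
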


For divergence form operators on rough bounded domains this seems to be a new result. It will be proved in Section~\ref{Sec: Holomorphy Lp}. Let us remark that such kind of perturbation result usually necessitates the treatment of complex coefficients even if one aims at applying it in the realm of real equations only.

As for applications, all aforementioned results become more powerful the more is known about the set $\cJ(L)$. Riesz-Thorin interpolation reveals that $\cJ(L)$ is an interval and by maximal accretivity of $L$ on $\L^2(\Omega)^m$ it contains $2$. By means of Nash-type inequalities we shall prove in Section~\ref{Sec: Properties of the semigroup} the following

\begin{theorem}
\label{Thm: Range for Lp realization}
If the bounded domain $\Omega \subseteq \R^d$ satisfies Assumption~\ref{N}, then
\begin{align*}
 \cJ(L) \supseteq \begin{cases}
                   (1, \infty) &\text{if $d=2$} \\
                   (2_*,2^*) &\text{if $d \geq 3$}.
                  \end{cases}
\end{align*}
If in addition Assumption~\ref{D} is in power, then $(2_*, 2^*)$ can be replace with $(2_*-\eps, 2^* + \eps)$ if $d \geq 3$ and $\eps > 0$ depends on ellipticity, dimensions, and geometry. For $p$ in these sub-intervals the $\L^p$ bound for the semingroup depends on $p$, ellipticity, dimensions, and geometry.
\end{theorem}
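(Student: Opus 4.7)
\textit{Proof proposal.} Since $\cJ(L)$ is already known to be an interval containing $2$, it suffices to exhibit the stated open intervals. My plan is to couple a Nash-type inequality (derived from Sobolev embedding under Assumption~\ref{N}) with the $\L^2$-energy identity, extract a uniform $\L^q$-bound for $v(t) := \e^{-tL}u$ for $q$ below $2$, and then dualise against the adjoint semigroup to cover $q$ above $2$.

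Under Assumption~\ref{N}, the Lipschitz charts around $\overline{N}$ furnish a bounded extension $\IW^{1,2}(\Omega) \to \W^{1,2}(\R^d)^m$, which combined with Euclidean Sobolev yields $\IW^{1,2}(\Omega) \hookrightarrow \L^{2^*}(\Omega)^m$ in dimension $d \geq 3$, resp.\ $\IW^{1,2}(\Omega) \hookrightarrow \L^r(\Omega)^m$ for every $r < \infty$ when $d = 2$. Interpolating against $\L^q$ supplies the Nash-type inequality
\[
 \|u\|_2^{1+\tau} \;\leq\; C \bigl(\|\nabla u\|_2 + \|u\|_2\bigr) \|u\|_q^{\tau}, \qquad u \in \IW^{1,2}(\Omega) \cap \L^q(\Omega)^m,
\]
valid for every $q \in (2_*, 2)$ when $d \geq 3$ and every $q \in (1,2)$ when $d=2$, with $\tau = \tau(q,d) > 0$.

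For such $q$ and $u \in \L^2(\Omega)^m \cap \L^q(\Omega)^m$, the G\aa{}rding/accretivity of $\fa$ gives the energy estimate $-\tfrac{\d}{\d t}\|v(t)\|_2^2 = 2\Re\fa(v(t),v(t)) \gtrsim \|\nabla v(t)\|_2^2 - \|v(t)\|_2^2$, while $\L^2 \hookrightarrow \L^q$ on the bounded $\Omega$ ensures that $t \mapsto \|v(t)\|_q$ is locally bounded on $[0,\infty)$. Plugging Nash into the energy identity produces a differential inequality of the form $-f' \gtrsim f^{1+\tau} g^{-\tau} - f$ with $f := \|v\|_2^2$ and $g := \|v\|_q^2$; a Nash--Moser-type bootstrap along short time intervals then upgrades the a priori local $\L^q$-control into the uniform estimate $\|v(t)\|_q \leq C\|u\|_q$ for all $t > 0$, i.e.\ $q \in \cJ(L)$. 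Since $L^*$ inherits the same structural hypotheses, the same argument yields $(2_*,2) \subseteq \cJ(L^*)$; duality transfers this to $(2,2^*) \subseteq \cJ(L)$ and, combined with $2 \in \cJ(L)$, completes the first assertion.

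If Assumption~\ref{D} is additionally in force, the $(d-1)$-Ahlfors regularity of $D$ triggers a Meyers--Gehring-type self-improvement of the reverse H\"older inequality satisfied by $\sqrt{t}\,\nabla v(t)$; this promotes the Sobolev exponent $2^*$ in the previous argument to $2^*+\delta$ for some geometric $\delta > 0$, producing the extended range $(2_*-\eps, 2^*+\eps)$. The delicate point---and the main obstacle---is running the Nash bootstrap without the $\L^1$-contractivity input that the Beurling--Deny criterion would afford in the real self-adjoint scalar case; this is bypassed by the a priori $\L^q$-control inherited from $\L^2 \hookrightarrow \L^q$ on the bounded domain together with the symmetric role of $L$ and $L^*$.
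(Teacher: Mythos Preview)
Your Nash-type differential inequality is set up correctly, but the step where you ``bootstrap along short time intervals'' to conclude $\|v(t)\|_q \leq C\|u\|_q$ is not an argument. The inequality $-f' \gtrsim f^{1+\tau} g^{-\tau}$ has $g(t)=\|v(t)\|_q^2$ on the right, and the only a priori control you invoke is $\|v(t)\|_q \leq |\Omega|^{1/q-1/2}\|v(t)\|_2 \leq |\Omega|^{1/q-1/2}\|u\|_2$, which bounds $g$ in terms of $\|u\|_2$, not $\|u\|_q$. Integrating the differential inequality therefore produces an $\L^q\!\to\!\L^2$ decay estimate with constant depending on $\|u\|_2$, which is useless for $\L^q$ boundedness. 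There is no feedback mechanism here: Nash iteration yields hypercontractivity $\L^q\!\to\!\L^2$ \emph{given} $\L^q$ boundedness (this is exactly Proposition~\ref{Prop: Hypercontractivity}\eqref{i Hypercontractivity}), not the other way around. The paper closes this gap differently: it proves the $\L^2\!\to\!\L^p$ bound for $p\in(2,2^*)$ directly from a single application of Gagliardo--Nirenberg and the energy identity (no differential inequality needed, since both factors on the right are controlled by $\|u\|_2$), and then passes to $\L^p$ boundedness through the off-diagonal machinery---interpolation with the $\L^2$ off-diagonal estimates of Proposition~\ref{Prop: OD estimates for semigroup on L2} and the covering argument of Lemma~\ref{Lem: OD implies bounded}. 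You have no substitute for this step.

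For the extension past $(2_*,2^*)$, your Meyers--Gehring sketch is not connected to the quantity you need: a reverse H\"older inequality for $\sqrt{t}\,\nabla\e^{-tL}$ would at best improve gradient integrability, not the $\L^p$ bound of $\e^{-tL}$ itself. The paper's mechanism is entirely different and rests on Proposition~\ref{Prop: Domain is W1p} (the \v{S}ne\u{\i}berg-type isomorphism $\Lop:\IW^{1,q}\to\IW^{-1,q}$ for $q$ near $2$, which is where Assumption~\ref{D} enters). Writing $\e^{-tL}u = L\e^{-tL}\Lop^{-1}u$ via Cauchy's formula and chaining Sobolev embeddings $\L^{q_*}\hookrightarrow\IW^{-1,q}$, $\IW^{1,q}\hookrightarrow\L^{q^*}$ with the already-established $\L^{q^*}$ bound on the semigroup yields an $\L^{q_*}\!\to\!\L^{q^*}$ bound; a final interpolation with $\L^2$ off-diagonal estimates and Lemma~\ref{Lem: OD implies bounded} then gives $\L^{r_*}$ boundedness for $r_*<2_*$.
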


We remind the reader that $2_*:= 2d/(d+2)$ and $2^*:=2d/(d-2)$ are the lower and upper Sobolev conjugates of $2$, respectively. The above result for $d \geq 3$ is sharp even for $m=1$ in the sense that no interval larger than $[2_*,2^*]$ is contained in $\cJ(L)$ for \emph{every} elliptic equation $L$ even with pure Dirichlet boundary conditions on a smooth and bounded domain. This follows from \cite[Prop.~2.20]{HMMc}, see also \cite[Thm.~5]{Davies-CE}: Indeed, for $p$ in the interior of $\cJ(L)$ the semigroup generated by $-L$ decays exponentially in $\L^p$-norm due to Lemma~\ref{Lem: Expontial stability} below and interpolation, hence $L^{-1}$ is bounded in $\L^p$-topology, but \cite{HMMc} constructs for every $p>2^*$ an example such  $L^{-1}$ does not even map $\C_0^\infty(\Omega)$ into $\L^p(\Omega)$. By duality we can argue likewise for $p<2_*$.

Let us also mention that many applications to physically motivated models require the adjoint isomorphism to \eqref{Eq: General Isom}, that is
\begin{align*}
 (L^*)^{1/2}: \L^{p'}(\Omega)^m \xrightarrow{\; \cong \;} \IW^{-1,p'}(\Omega),
\end{align*}
for some $p$ with H\"older conjugate $p' > d$. This is granted by Theorems~\ref{Thm: Square root solution Lp} and \ref{Thm: Range for Lp realization} in dimensions $d=2,3,4$ corresponding to $(2_*)' = 2^* = \infty, 6,4$. If $m=1$ and $L$ has real coefficients, then $\cJ(L) = (1,\infty)$ by Gaussian estimates \cite{Arendt-terElst} and we recover the result in~\cite{ABHR}. Improvements on $\cJ(L)$ for certain systems with real coefficients were obtained in \cite{Shen-Resolvent} and \cite{Wei-Zhang}. The Lam\'e system
\begin{align*}
 L_{D,0} u =  -\mu \Delta u - \mu' \nabla \div u
\end{align*}
fits into our framework provided $\mu > 0$ and $\mu + \mu' > 0$, see \cite{MM-Lame}. In this paper, M.~Mitrea and Monniaux consider $L_{D,0}$ with pure Dirichlet boundary conditions on a bounded domain $\Omega$ satisfying an interior ball conditions and obtain maximal regularity on $\L^q(\Omega)$ in the range $q \in (2_*,2^*)$. We remind the reader that Assumption~\ref{N} is void if one considers pure Dirichlet conditions. Hence, by putting together Theorems~\ref{Thm: Hinfty in Lp} and \ref{Thm: Range for Lp realization} we are able to drop this geometric assumption and obtain even a bounded $\H^\infty$-calculus for $L_{D,0}$ on any bounded domain, which in turn implies maximal regularity~\cite{Dore-Venni}. Some boundary regularity in the sense of Assumption~\ref{D} also allows us to increase the range for $q$. Let us stress, however, that the maximal regularity part has previously been obtained in a broader context by Tolksdorf~\cite{Patrick}, who uses a technique different to ours that does not pass through the bounded $\H^\infty$-calculus and yields the range $(2_*-\eps, 2^* + \eps)$ without further geometric assumptions.

In the following section we provide precise definitions of all assumptions and notation that have been dealt with rather intuitively up to now. The remaining sections are devoted to the proofs of our main results, Theorems~\ref{Thm: Kato} - \ref{Thm: Range for Lp realization}. The order of proofs will slightly differ from the presentation above.

\subsection*{Added after publication}

Proposition~\ref{Prop: Weak type estimate square root} of the published version (J.~Differential Equations 265 (2018), no.~4, 1279--1323) contains an unfortunate confusion concerning choice and role of the exponent $p$. In dimension $d \geq 3$, the given argument was correct but instead of $p \in \cJ(L)$ the weaker assumption $p^* \in \cJ(L)^\circ$ suffices. In dimension $d=2$, the premise was void but the conclusion holds for all $p \in (1,2]$. This update contains the necessary, very minor correction. All other results of the published version remain unchanged. I am grateful to Sebastian Bechtel for pointing out this mistake to me.
\section{Notation and background}
\label{Sec: Notation}

Any Banach space $X$ under consideration is over the complex numbers and $X^*$ is the \emph{(anti)-dual space} of conjugate linear bounded functionals $X \to \IC$. We write $\dscal{\cdot}{\cdot}$ for duality pairings and $\scal{\cdot}{\cdot}$ for inner products on Hilbert spaces.

If $B$ is an open ball of radius $r$, then we denoted by $\alpha B$ the concentric ball of radius $\alpha r$ and let $C_1(B) := 4B$ as well as $C_j(B) := 2^{j+1}B \setminus 2^{j}B$ for $j=2,3,\ldots$. We use similar notation for cubes. We denote the semi-distance of sets $E, F \subseteq \R^d$ induced by the Euclidean distance on $\R^d$ by $\dist(E,F)$ and abbreviate $\dist_F(x):= \dist(\{x\},F)$. Lebesgue measure on $\R^d$ is denoted $|\cdot|$.


\subsection{Geometry}
\label{Subsec: Geometry}

Henceforth we assume that $\Omega \subseteq \R^d$, $d \geq 2$, is a bounded, open, and connected set, that is to say, a bounded \emph{domain}. We remind the reader that the elliptic operator $L$ corresponds to a system of $m$ equations and hence acts on functions $u: \Omega \to \IC^m$. We can allow for different \emph{Dirichlet parts} for each coordinate function $u^{(k)}$, which we denote $D_k$, and which are assumed to be closed subsets of $\bd \Omega$. Hence, $L$ is subject to \emph{mixed boundary conditions}
\begin{align}
\label{Dirichlet boundary conditions}
 \text{$u^{(k)} = 0 \quad$ on $D_k \quad$ for $k=1, \ldots,m$},
\end{align}
where the form method forces \emph{natural boundary conditions} on the complementary parts $N_k:= \bd \Omega \setminus D_k$ through a formal integration by parts. See Section~\ref{Subsec: L} below. We put $D := \bigcap_{k=1}^m D_k$ and $N := \bd \Omega \setminus D = \bigcup_{k=1}^m N_k$.

Our results hold under the following three geometric assumptions. Sometimes not all of them shall be required.

\begin{assumption}{D}
 \label{D} For every $k = 1,\ldots,m$ the set $D_k$ is closed and either empty or such that $\cH_{d-1}(B \cap D) \simeq r^{d-1}$ holds for for all open balls $B$ of radius $r<1$ centered in $D$. Here, $\cH_{d-1}$ is the $(d-1)$-dimensional Hausdorff measure in $\R^d$.
\end{assumption}

\begin{assumption}{N}
 \label{N} Around every $x \in \cl{N}$ there is an open neighborhood $U_x$ and a bi-Lipschitz mapping $\Phi_x: U_x \to (-1,1)^d$ such that
 \begin{align*}
  \qquad \qquad \Phi_x(U_x \cap \Omega) = (-1,0) \times (-1,1)^{d-1}, \qquad \Phi_x(U_x \cap \bd \Omega) = \{0\} \times (-1,1)^{d-1}.
 \end{align*}
\end{assumption}

\begin{assumption}{$\Omega$}
 \label{Omega}  Comparability $|B \cap \Omega| \simeq r^d$ holds for all open balls $B$ of radius $r<1$ centered in $\Omega$.
\end{assumption}

Assumption~\ref{D} means that the $D_k$ are either empty or \emph{$(d-1)$-Ahlfors regular}. Likewise, Assumption~\ref{Omega} means that $\Omega$ is \emph{$d$-Ahlfors regular}. Assumption~\ref{Omega} is sometimes called \emph{weak Lipschitz condition}. It is strictly weaker than requiring that $\Omega$ has a Lipschitz boundary around $\cl{N}$, see \cite[Sec.~7.3]{RobertJDE} for a relevant example.

\subsection{Sobolev spaces}
\label{Subsec: Sobolev spaces}

We introduce Sobolev spaces on a domain $\Xi \subseteq \R^d$ with a vanishing trace condition on some closed subset $F \subseteq \cl{\Xi}$. This is understood in a very weak approximate sense but can be rephrased in a proper pointwise sense under minimal regularity assumptions \cite{Brewster, ET}. We shall not need such precision. Namely, for $1 < q < \infty$ we let $\W_F^{1,q}(\Xi)$ be the closure of
\begin{align*}
 \C_F^\infty(\Xi) := \Big\{\varphi |_\Xi : \varphi \in \C_0^\infty(\R^d),\, \supp(\varphi) \cap F = \emptyset \Big\}
\end{align*}
for the norm $\varphi \mapsto (\int_\Xi |\varphi|^q + |\nabla \varphi|^q \; \d x)^{1/q}$. The endpoint space $\W_F^{1,\infty}(\Xi)$ consists of all bounded Lipschitz continuous functions $u: \cl{\Xi} \to \IC$ that vanish everywhere on $F$. It carries the norm $u \mapsto \|u\|_\infty + \mathrm{Lip}(u)$, where $\mathrm{Lip}(u)$ is the smallest Lipschitz constant for $u$ on $\cl{\Xi}$. 

Usually we encounter the spaces $\W^{1,p}_{D_k}(\Omega)$. Under Assumption~\ref{N} there are bounded linear Sobolev extension operators $E_k$ that extend $\W^{1,p}_{D_k}(\Omega) \to \W^{1,p}_{D_k}(\R^d)$ and $\L^p(\Omega) \to \L^p(\R^d)$ for every $p \in (1,\infty)$, see \cite[Thm.~6.16]{Hardy-Poincare}. In particular, the usual embeddings of type $\W^{1,p}_{D_k}(\Omega) \subseteq \L^q(\Omega)$ hold. Usually, $q=p^*$ is the \emph{upper Sobolev conjugate} of defined by $1/p^* = 1/p - 1/d$. We also define a lower conjugate by $1/p_* = 1/p + 1/d$. In the particular situation they will be contained in $(1,\infty)$.

Sobolev spaces adapted to the boundary conditions \eqref{Dirichlet boundary conditions} are $\IW^{1,p}(\Omega):= \prod_{k=1}^m \W^{1,p}_{D_k}(\Omega)$. 
For $p \in (1,\infty)$ we define corresponding spaces of negative order $\IW^{-1,p}(\Omega) := (\IW^{1, q}(\Omega))^*$, where $1/p + 1/q = 1$.

\subsection{Holomorphic functional calculi}
\label{Subsec: Functional calculus}

General background and proofs of all relevant statements on the holomorphic functional calculus for sectorial operators can be found in~\cite{Haase}. Bisectorial operators can be treated almost identically but details have also been written down in \cite[Ch.~3]{EigeneDiss}. Throughout we shall assume that $X$ is a Hilbert space.

A linear operator $T$ in $X$ is \emph{sectorial} of angle $\phi \in [0, \pi)$ if its spectrum $\sigma(T)$ is contained in the closure of the sector $\S^+_\phi:= \{z \in \IC : \, |\arg z|<\phi \}$ and if 
\begin{align*}
 \IC \setminus \cl{\S^+_\psi} \to \Lop(X), \qquad z  \mapsto z(z-T)^{-1}
\end{align*}
is uniformly bounded for every $\psi \in (\phi, \pi)$. We agree on $\S^+_0 := (0,\infty)$. 

For $\psi \in (\phi, \pi)$ let $\H^\infty(\S^+_\psi)$ be the algebra of \emph{bounded holomorphic functions} on $\S^+_\psi$ and let $\H_0^\infty(\S^+_\psi)$ be the sub-algebra of functions $g$ satisfying $|g(z)| \leq C \min\{|z|^s, |z|^{-s}\}$ for some $C,s >0$ and all $z \in \S^+_\psi$. If $f(z) = a + b(1+z)^{-1} + g(z)$ for some $a,b \in \IC$ and $g \in \H_0^\infty(\S^+_\psi)$, then $f(T)$ is defined as a bounded operator on $X$ via
\begin{align}
\label{Eq: Def H0infty calculus}
f(T) = a + b(1+T)^{-1} + \frac{1}{2 \pi \i} \int_{\bd \S^+_\nu} g(z)(z-T)^{-1} \; \d z, 
\end{align}
where $\nu \in (\phi, \psi)$, the choice of which does not matter in view of Cauchy's theorem, and $\bd \S^+_\nu$ is oriented such that it surrounds $\sigma(T)$ counterclockwise in the extended complex plane. 

The definition of $f(T)$ is extended to larger classes of holomorphic functions by \emph{regularization}: One defines the closed operator $f(T) := e(T)^{-1}(ef)(T)$, if $e(T)$ and $(ef)(T)$ are already defined by the procedure above and $e(T)$ is one-to-one. This definition does not depend on the choice of $e$. The expected relations $f(T) + g(T) \subseteq (f + g)(T)$ and $f(T)g(T) \subseteq (f g)(T)$ hold and there is equality if $f(T)$ is bounded. An example are \emph{fractional powers} $T^\alpha$, $\alpha > 0$, which are defined on using $e(z)=(1+z)^{-\lceil\alpha\rceil-1}$.

If $T$ is one-to-one, then $e(z)=z(1+z)^{-2}$ regularizes any $f \in \H^\infty(\S^+_\psi)$. Its $\H^\infty(\S^+_\psi)$-calculus is called \emph{bounded} if for some constant $C_\psi > 0$ it holds
\begin{align*}
 \|f(T)\|_{X \to X} \leq C_\psi \|f\|_{\L^\infty(\S^+_\psi)} \qquad (f \in \H^\infty(\S^+_\psi)).
\end{align*}
It suffices to check this bound on $\H_0^\infty(\S^+_\psi)$. Indeed, for general $f \in \H^\infty(\S^+_\psi)$ the convergence lemma states that $f_n = e^{1/n} f \in \H^\infty_0(\S_\psi^+)$ satisfy $f_n \to f$ pointwise, $\|f_n\|_\infty \to \|f\|_\infty$, and $f_n(T) \to f(T)$ strongly on $X$. 

We will frequently use that if $T$ has a bounded $\H^\infty$-calculus of angle $\psi$, then so has the adjoint $T^*$. This is a consequence of the identity $f(T)^* = f^*(T^*)$ for every $f \in \H^\infty(\S^+_\psi)$, where $f^*(z) := \cl{f(\cl{z})}$. 

\emph{Bisectorial} operators are defined similarly upon replacing sectors by double sectors $\S_\phi = \S^+_\phi \cup -\S^+_\phi$, where $\phi \in [0, \pi/2)$. In their calculus $(\i + T)^{-1}$ replaces $(1+T)^{-1}$.

\subsection{The divergence form operator}
\label{Subsec: L}

We turn to the precise definition of the divergence form operator formally given by \eqref{L}. The coefficients $a_{ij}: \Omega \to \Lop(\IC^m)$ are measurable and essentially bounded and we put
\begin{align*}
 \Lambda := \sup_{0 \leq i,j \leq d} \essup_{x \in \Omega} \|a_{ij}(x)\|_{\IC^m \to \IC^m}.
\end{align*}
We remind the reader of the sesquilinear form
\begin{align*}
 \fa(u,v) = \int_\Omega \sum_{i,j = 1}^d \bigg( a_{ij} \pdx{u}{j} \cdot \cl{\pdx{v}{i}} + a_{i0} u \cdot \cl{\pdx{v}{i}} + a_{0j} \pdx{u}{j} \cdot \cl{v} + a_{00}u \cdot \cl{v} \bigg) \; \d x
\end{align*}
acting on $\IC^m$-valued functions. For all $u \in \IW^{1,2}(\Omega)$ we have $|a(u,u)| \leq \Lambda(d+1) (\|u\|_2^2 + \|\nabla u\|_2^2)$, where $\nabla u := (\pdx{u}{i})_i$ is considered as a vector in $(\IC^m)^d \cong \IC^{dm}$. Our ellipticity assumption is the following lower bound.

\begin{assumption}{L}
\label{Ass: Ellipticity}
There exists $\lambda > 0$ such that $\Re a(u,u)  \geq \lambda (\|u\|_2^2 + \|\nabla u\|_2^2)$ holds for all $u \in \IW^{1,2}(\Omega)$.
\end{assumption}

This implies that the \emph{numerical range} $\{a(u,u) : u \in \dom(a),\, \|u\|_2 = 1 \}$ is contained in the closed sector $\cl{S_\phi^+}$ of opening angle $\phi = \arctan((d+1)\Lambda/\lambda)$. We define $\omega \in [0, \pi/2)$ to be the \emph{smallest} such angle. 

The Lax-Milgram lemma associates with $a$ the bounded and invertible operator
\begin{align*}
 \Lop: \IW^{1,2}(\Omega) \to \IW^{1,2}(\Omega)^*, \quad \dscal{\Lop u}{v} = \fa(u,v).
\end{align*}
We define $L$ to be the maximal restriction of $\Lop$ to the Hilbert space $\L^2(\Omega)^m$. Our assumptions entail that $a$ is a closed densely defined sectorial form of angle $\omega$ in $\L^2(\Omega)^m$ and hence $L$ is \emph{maximal $\omega$-accretive}, see \cite[Ch.~VI]{Kato}. This is a stronger notion than sectoriality of angle $\omega$. It is known that such operators admit a bounded $\H^\infty$-calculus on any sector containing $\cl{\S^+_\omega}$. This is due to Crouzeix--Delyon \cite{Crouzeix-Delyon}, see also \cite[Cor.~7.1.17]{Haase}:

\begin{proposition}
\label{Prop: Hinfty on L2}
Let $\psi \in (\omega, \pi)$ and $f \in \H^\infty(\S^+_\psi)$. Then $\|f(L)\|_{\L^2 \to \L^2} \leq 4 \|f\|_\infty$.
\end{proposition}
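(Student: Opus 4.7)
The proof is an immediate application of the Crouzeix--Delyon theorem for operators whose numerical range lies in a sector; I would structure it as verification of the hypothesis followed by invocation of the abstract statement.

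The hypothesis to verify is that the numerical range $W(L) = \{\scal{Lu}{u} : u \in \dom(L),\, \|u\|_2 = 1\}$ is contained in $\cl{\S^+_\omega}$. This is automatic from maximal $\omega$-accretivity: by Kato's first representation theorem $\scal{Lu}{u} = \fa(u,u)$ for every $u \in \dom(L)$, and the numerical range of $\fa$ lies in $\cl{\S^+_\omega}$ by Assumption~\ref{Ass: Ellipticity} together with the minimality in the definition of $\omega$. A direct consequence, via Cauchy--Schwarz, is the sharp resolvent bound $\|(\lambda - L)^{-1}\|_{\L^2 \to \L^2} \leq 1/\dist(\lambda, \cl{\S^+_\omega})$ for every $\lambda \notin \cl{\S^+_\omega}$, which also ensures that $L$ fits the functional calculus framework of Section~\ref{Subsec: Functional calculus}.

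I would then invoke \cite[Cor.~7.1.17]{Haase}: for any operator $T$ on a Hilbert space whose numerical range is contained in $\cl{\S^+_\phi}$ and any $\psi \in (\phi, \pi)$, one has $\|f(T)\|_{X \to X} \leq 4\|f\|_\infty$ for every $f \in \H^\infty(\S^+_\psi)$. Applied to $T = L$ and $\phi = \omega$, this yields the proposition. The routine passage from $\H_0^\infty(\S^+_\psi)$ to general $\H^\infty(\S^+_\psi)$ is provided by the convergence lemma recalled in Section~\ref{Subsec: Functional calculus}, which preserves the factor $4$ and the sup-norm control.

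The entire difficulty sits in the cited abstract theorem, where the Hilbert space structure is essential: a naive Dunford integral along $\bd \S^+_\nu$ using only the resolvent bound above produces a constant that deteriorates as $\nu \to \omega$ or $\nu \to \psi$, whereas the Crouzeix--Delyon argument combines a conformal map from a half-plane onto $\S^+_\omega$ with a von Neumann type inequality on the half-plane (equivalent, via the Cayley transform, to $\|p(V)\| \leq \|p\|_{\infty, \mathbb{D}}$ for a Hilbert space contraction $V$, itself a consequence of Sz.-Nagy's unitary dilation theorem) to yield the universal constant $4$ independent of $\psi$. Since we use the result purely as a black box, no further work is needed here.
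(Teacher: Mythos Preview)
Your proposal is correct and matches the paper's approach exactly: the paper does not give a proof but simply cites Crouzeix--Delyon \cite{Crouzeix-Delyon} and \cite[Cor.~7.1.17]{Haase}, with the subsequent remark noting that the sharper constant $2+2/\sqrt{3}$ would also work. Your verification that the numerical range of $L$ lies in $\cl{\S^+_\omega}$ via the form identity $\scal{Lu}{u}=\fa(u,u)$ is precisely the maximal $\omega$-accretivity established in Section~\ref{Subsec: L}, so the invocation of the black-box result is fully justified.
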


\begin{remark}
\label{Rem: Hinfty on L2}
In fact, the constant $2+2/\sqrt{3}$ instead of $4$ works but this would become messy to carry through all subsequent calculations. The important information is that this bound is universal for the class of maximal accretive operators on Hilbert spaces.
\end{remark}

Since $L$ is maximal accretive on $\L^2(\Omega)^m$, the semigroup operators $\e^{-zL}$, $z \in \S_{\pi/2 - \omega}^+$, are \emph{contractions} on $\L^2(\Omega)^m$, see \cite[Sec.~XI.6]{Kato}. It will be useful to have the following \emph{exponential stability} which follows simply because $L-\lambda/2$ is still maximal accretive.

\begin{lemma}
\label{Lem: Expontial stability}
For every $t>0$ the bound $\|\e^{-tL}\|_{\L^2 \to \L^2} \leq \e^{-\lambda t/2}$. In particular, $L$ is invertible.
\end{lemma}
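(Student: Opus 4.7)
The plan follows exactly the hint in the statement: exploit that the coercivity in Assumption~\ref{Ass: Ellipticity} contains a zeroth-order term, so the form $a$ can be shifted by $-\lambda/2$ without losing sectoriality.

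First I would introduce the shifted sesquilinear form
\begin{align*}
 b(u,v) := a(u,v) - \frac{\lambda}{2} \scal{u}{v}_{\L^2}, \qquad \dom(b) = \IW^{1,2}(\Omega).
\end{align*}
Its upper bound only differs from that of $a$ by a bounded lower-order term, so $b$ is again bounded on $\IW^{1,2}(\Omega)$. From Assumption~\ref{Ass: Ellipticity} we get
\begin{align*}
 \Re b(u,u) \;\geq\; \lambda(\|u\|_2^2 + \|\nabla u\|_2^2) - \tfrac{\lambda}{2}\|u\|_2^2 \;=\; \tfrac{\lambda}{2}\|u\|_2^2 + \lambda \|\nabla u\|_2^2,
\end{align*}
which shows that $b$ is still coercive in the sense of Assumption~\ref{Ass: Ellipticity} (with constant $\lambda/2$) and in particular accretive. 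Thus $b$ is a closed densely defined sectorial form on $\L^2(\Omega)^m$, and the operator it generates via Kato's form method is precisely the maximal accretive operator $L - \lambda/2$ with domain $\dom(L)$.

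Next I would invoke the standard semigroup theory for maximal accretive operators on Hilbert spaces (as already used in Section~\ref{Subsec: L} to get contractivity of $\e^{-tL}$): $-(L-\lambda/2)$ generates a strongly continuous contraction semigroup on $\L^2(\Omega)^m$. Since the form shift corresponds to a scalar shift of the operator, we have the identity
\begin{align*}
 \e^{-tL} = \e^{-\lambda t/2}\, \e^{-t(L - \lambda/2)} \qquad (t>0),
\end{align*}
and passing to operator norms yields $\|\e^{-tL}\|_{\L^2 \to \L^2} \leq \e^{-\lambda t/2}$.

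Finally, for invertibility I would note that by Assumption~\ref{Ass: Ellipticity} one has the a priori bound $\|Lu\|_2 \, \|u\|_2 \geq |\scal{Lu}{u}| \geq \Re a(u,u) \geq \lambda \|u\|_2^2$, so $L$ is injective with closed range and $\|L^{-1}\|_{\L^2 \to \L^2} \leq 1/\lambda$ on that range. The same inequality applied to the adjoint form $a^*$ (which satisfies the same ellipticity bound) shows that $L^*$ is also injective, hence $L$ has dense range, and therefore $0 \in \rho(L)$. Alternatively, one can read off the same conclusion at once from the exponential decay via the integral representation $L^{-1} = \int_0^\infty \e^{-tL}\,\d t$ in the strong sense. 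The only mild subtlety is verifying that the operator associated to the shifted form $b$ really equals $L - \lambda/2$, but this is immediate from the uniqueness part of the representation theorem applied to both forms on the common domain $\dom(L)$.
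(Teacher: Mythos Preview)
Your proof is correct and follows exactly the approach the paper indicates: the paper's entire justification is the sentence ``This follows simply because $L-\lambda/2$ is still maximal accretive,'' and you have fleshed out precisely that argument. Your elaboration of the form shift, the resulting contractivity of $\e^{-t(L-\lambda/2)}$, and either route to invertibility are all sound.
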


\subsection{The square root}
\label{Subsec: square root L}

Being maximal accretive, $L$ has a unique maximal accretive square root denoted $L^{1/2}$ and $\dom(L)$ is a core for $\dom(L^{1/2})$, see \cite[Thm.~V.3.35]{Kato}. This is the same operator as given by the functional calculus~\cite[Cor.~7.1.13]{Haase} and since $L$ is invertible, so is $L^{1/2}$ with inverse $L^{-1/2}$, see \cite[Prop.~3.1.1]{Haase}. We also have the formula $(L^{1/2})^* = (L^*)^{1/2}$ for the adjoints~\cite[Prop.~7.0.1e)]{Haase}. Since $L$ has a bounded $\H^\infty$-calculus of some angle $\omega \in [0,\pi/2)$, we have a \emph{resolution of the identity} in the sense of an improper Riemann integral
\begin{align}
\label{Eq: Resolution of the identity}
 u = \frac{2}{\sqrt{\pi}} \int_0^\infty L^{1/2} \e^{-t^2 L}u \; \d t \qquad (u \in \L^2(\Omega)^m),
\end{align}
see \cite[Thm.~5.2.6]{Haase}. We can apply $L^{1/2}$ or $L^{-1/2}$ on both sides of \eqref{Eq: Resolution of the identity} to obtain well-known integral formul\ae \, for either of them.

\section{A review on the \texorpdfstring{$\L^2$}{L2} results}
\label{Sec: L2}

We survey the first-order formalism and its practicability to the operator $L$ under our geometric assumptions developed in \cite{Laplace-Extrapolation, Darmstadt-KatoMixedBoundary}. This leads to Theorem~\ref{Thm: Kato} and the statements of Theorem~\ref{Thm: Holomorphy} when $p=2$. Throughout we assume \ref{D}, \ref{N}, and \ref{Omega}.

\subsection{The first order formalism}

We write the coefficients of $L$ in matrix form
\begin{align*}
 \begin{bmatrix}
  a_{00} & {\begin{bmatrix}\, a_{10} & \ldots & a_{d0} \, \end{bmatrix}} \\ {\begin{bmatrix} a_{10} \\ \vdots \\ a_{d0} \end{bmatrix}} & {\begin{bmatrix} a_{11} & \ldots & a_{1d} \\ \vdots & & \vdots \\ a_{d1} & \ldots & a_{dd} \end{bmatrix}}
 \end{bmatrix}
 = \begin{bmatrix} A_{\pe \pe} & A_{\pe \pa} \\ A_{\pa \pe} & A_{\pa \pa} \end{bmatrix}
 =A
\end{align*}
and define a closed operator $\nabla_D: \IW^{1,2}(\Omega) \subseteq \L^2(\Omega)^m \to \L^2(\Omega)^{dm}$ through $\nabla_D u = \nabla u$. See Section~\ref{Subsec: L} for the gradient of $\IC^m$-valued functions. An equivalent way of putting the definition of $L$ through the form method in Section~\ref{Subsec: L} is $L = \begin{bmatrix} 1 & \nabla_D^* \end{bmatrix} A \begin{bmatrix} 1 & \nabla_D \end{bmatrix}^\top$.

On $\cH = \L^2(\Omega)^m \times \L^2(\Omega)^m \times \L^2(\Omega)^{dm}$ we define operator matrices on their natural domains,
\begin{align*}
 \Gamma:= 
 \begin{bmatrix} 0 & & \\ 1 & 0 & \\ \nabla_D & & 0 \end{bmatrix}, \qquad
 B_1 = \begin{bmatrix} 1 & & \\ & 0 & \\ && 0 \end{bmatrix}, \qquad
 B_2 = \begin{bmatrix} 0 & & \\ & A_{\pe \pe} & A_{\pe \pa} \\ & A_{\pa \pe} & A_{\pa \pa} \end{bmatrix},
\end{align*}
and consider the \emph{perturbed Dirac operator} $\Pi_B := \Gamma + B_1 \Gamma^* B_2$. Indeed $\Pi_B$ is a Dirac operator in that its square contains $L$, namely
\begin{align*}
 \Pi_B = \begin{bmatrix}  & \begin{bmatrix} 1 & (\nabla_D)^* \end{bmatrix}A \,  \\ \begin{matrix} 1 \\ \nabla_D \end{matrix} & \end{bmatrix}, \qquad 
 \Pi_B^2 = \begin{bmatrix} L & \\ & \begin{bmatrix} 1 & \nabla_D^* \hspace{-7pt} \\ \nabla_D & \nabla_D \nabla_D^* \end{bmatrix} A \end{bmatrix}.
\end{align*}
Within this framework our ellipticity Assumption~\ref{Ass: Ellipticity} can be rephrased as
\begin{align}
\label{Ellipticity Dirac}
 \Re \scal{B_2 u}{u} \geq \lambda \|u\|_\cH^2 \qquad (u \in \Rg(\Gamma)),
\end{align}
that is to say, $B_1$ and $B_2$ are accretive perturbations of $\Gamma^*$ and $\Gamma$, respectively. It follows that $\Pi_B$ is bisectorial of some angle $\omega_B \in (0, \pi/2)$ with resolvent estimates both depending only on $\lambda$, $\Lambda$, see \cite[Prop.~2.5]{AKM-QuadraticEstimates}. The Kato square root problem has now become a question on the functional calculus for $\Pi_B$ -- comparing $\dom(L^{1/2})$ and $\IW^{1,2}(\Omega)$ amounts to comparing (the first components of) $\dom((\Pi_B^2)^{1/2})$ and $\dom(\Pi_B)$. 

On the abstract level, we have the following result from \cite[Prop.~3.3.15]{EigeneDiss} or \cite[p.~461]{Laplace-Extrapolation}. Explicit constants have not been stated there but pop up in the given proofs. 

\begin{lemma}
\label{Lem: Abstract Kato}
Let $T$ be a bisectorial operator in a Hilbert space $X$. Suppose the restriction to the closure of its range $\Rg(T)$ has a bounded $\H^\infty(\S_\psi)$-calculus for some $\psi \in (0,\pi/2)$. Then $\dom((T^2)^{1/2}) = \dom(T)$ and
\begin{align*}
 \frac{1}{C_\psi} \|T u\|_X \leq \|(T^2)^{1/2} u \|_X \leq C_\psi \|Tu\|_X \qquad (u \in \dom(T)),
\end{align*}
where $C_\psi$ is the bound for the functional calculus.
\end{lemma}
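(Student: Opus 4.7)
Since $X$ is a Hilbert space and $T$ is bisectorial, I would begin from the orthogonal splitting $X = \Ke(T) \oplus \overline{\Rg(T)}$, a consequence of the uniform resolvent bounds together with a mean ergodic argument. Both $T$ and $(T^2)^{1/2}$ vanish on $\Ke(T) = \Ke(T^2)$, so the claim is trivial there and it only remains to prove the equivalence on $\overline{\Rg(T)}$, where by hypothesis $T$ restricts to a bisectorial operator with bounded $\H^\infty(\S_\psi)$-calculus of bound $C_\psi$.

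The key observation is that the function
\[
\sgn(z) := (z^2)^{1/2}/z
\]
(principal branch) equals $+1$ on $\S^+_\psi$ and $-1$ on $-\S^+_\psi$; it therefore belongs to $\H^\infty(\S_\psi)$ with $\|\sgn\|_\infty = 1$ and satisfies $\sgn^2 \equiv 1$ as well as $z \cdot \sgn(z) = (z^2)^{1/2}$. The bounded calculus then produces $\sgn(T)$ as a bounded operator on $\overline{\Rg(T)}$ of norm at most $C_\psi$, and its multiplicativity forces $\sgn(T)^2 = \Id$. In particular $\sgn(T)$ is a topological automorphism of $\overline{\Rg(T)}$ whose inverse is itself and hence also has norm at most $C_\psi$.

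Next I would invoke the composition rule of the $\H^\infty$-calculus with $g(z) = z^2$, which maps $\S_\psi$ into the sector $\S^+_{2\psi}$, and with the principal square root on $\S^+_{2\psi}$: this identifies the sectorial square root of $T^2$ with $((z^2)^{1/2})(T)$ taken in the bisectorial calculus of $T$. Combining with the product rule of the extended calculus yields the factorisation $(T^2)^{1/2} = T \sgn(T) = \sgn(T) T$ as closed operators on $\overline{\Rg(T)}$. Since $\sgn(T)$ commutes with $T$ and is a bounded bijection of $\overline{\Rg(T)}$ onto itself, this factorisation at once gives $\dom((T^2)^{1/2}) \cap \overline{\Rg(T)} = \dom(T) \cap \overline{\Rg(T)}$ together with the two-sided bound
\[
C_\psi^{-1} \|Tu\|_X \leq \|(T^2)^{1/2} u\|_X \leq C_\psi \|Tu\|_X \qquad (u \in \dom(T) \cap \overline{\Rg(T)}).
\]
Assembling with the trivial contribution on $\Ke(T)$ then completes the proof.

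I expect the one nonroutine point to be the careful invocation of the composition rule, which links the bisectorial $\H^\infty$-calculus of $T$ (used to define $\sgn(T)$) with the sectorial calculus of $T^2$ (which defines $(T^2)^{1/2}$). This is standard within Haase's formalism, but requires attention to the extended calculus since $z \mapsto (z^2)^{1/2}$ is unbounded on $\S_\psi$; everything else reduces to multiplicativity and boundedness of the $\H^\infty(\S_\psi)$-functional calculus.
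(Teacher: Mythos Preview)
Your approach is correct and is precisely the standard sign-function argument that the paper defers to its references \cite[Prop.~3.3.15]{EigeneDiss} and \cite[p.~461]{Laplace-Extrapolation}; the paper does not give its own proof of this lemma. One small correction: the splitting $X = \Ke(T) \oplus \overline{\Rg(T)}$ furnished by the mean ergodic theorem is in general only a \emph{topological} direct sum, not an orthogonal one, but since your argument never uses orthogonality this does not affect anything.
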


On the concrete level, the goal of \cite{Laplace-Extrapolation} was to prove \emph{quadratic estimates} for the particular choice of $\Pi_B$ under a set of hypotheses called (H1) - (H7). We do not need to recall them here and the interested reader can refer to \cite[Sec.~5]{Laplace-Extrapolation}. For the operators above, they have been verified in detail in \cite[Sec.~6.1]{Laplace-Extrapolation} with two exceptions: The accretivity assumption (H2), which is precisely \eqref{Ellipticity Dirac}, and the regularity assumption (H7), whose verification in \cite{Laplace-Extrapolation} was subject to an additional assumption called Assumption~(E) that became a true theorem only later on in \cite[Thm.~4.4]{Darmstadt-KatoMixedBoundary}. 

This being said, \cite[Thm.~3.3]{Laplace-Extrapolation} reads as follows.

\begin{proposition}
\label{Prop: PiB}
For some constant $C \in (0,\infty)$ depending on geometry, dimensions, and ellipticity, there are quadratic estimates
\begin{align*}
 \frac{1}{C} \|u\|_2^2 \leq \int_0^\infty \|t \Pi_B (1+t^2 \Pi_B^2)^{-1}u\|_2^2 \; \frac{\d t}{t} \leq C \|u\|_2^2 \qquad (u \in \cl{\Rg(\Pi_B)}).
\end{align*}
\end{proposition}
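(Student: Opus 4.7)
The proposition is precisely the conclusion of \cite[Thm.~3.3]{Laplace-Extrapolation} applied to our specific triple $(\Gamma, B_1, B_2)$, \emph{provided} the hypotheses (H1)--(H7) of the first-order framework hold for this triple. My plan is therefore to check these hypotheses one by one, importing the verifications already carried out in the literature wherever possible, and then to address the two points that genuinely require work in our setup: accretivity (H2) and regularity (H7).

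\textbf{Verification of (H1)--(H6).} Hypotheses (H1) (closedness and partial nilpotency of $\Gamma$ and $\Gamma^*$) and (H3) (boundedness of the multiplication operators $B_1, B_2$ on $\cH$, with norm $\leq 1 + \Lambda(d+1)$) are structural and immediate from the block-matrix definitions, once one recalls that $\nabla_D$ is closed and densely defined from $\IW^{1,2}(\Omega)$ into $\L^2(\Omega)^{dm}$. Hypothesis (H2) is exactly the reformulation \eqref{Ellipticity Dirac} of Assumption~\ref{Ass: Ellipticity}: for $u \in \Rg(\Gamma)$ the first component vanishes, and the accretivity reduces to the form inequality with constant $\lambda$; the accretivity of $B_1$ on $\Rg(\Gamma^*)$ is trivial from its block structure. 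The localisation hypotheses (H4)--(H6), which control how $\Gamma, \Gamma^*$ commute with multiplication by bounded Lipschitz cut-offs, were checked in \cite[Sec.~6.1]{Laplace-Extrapolation}, and the constants involved depend only on $\Lambda$ together with the Ahlfors-regularity data of \ref{D}, \ref{N}, \ref{Omega} through Hardy--Poincaré inequalities and the extension operators of \cite{Hardy-Poincare}.

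\textbf{Regularity hypothesis (H7).} This is a Meyers-type improvement which, for our setup, amounts to a local $\L^{2+\eps}$ Caccioppoli-style bound for functions in $\dom(\nabla_D)$ satisfying the mixed boundary condition, uniformly across balls up to $\bd\Omega$. In \cite{Laplace-Extrapolation} this step was conditional on an auxiliary property labelled Assumption~(E); that gap has since been removed by \cite[Thm.~4.4]{Darmstadt-KatoMixedBoundary}, which establishes the required Sobolev extension/self-improvement estimate unconditionally under \ref{D}, \ref{N}, \ref{Omega}. Feeding this into the verification of (H7) completes the list, with $\eps$ and all constants depending only on geometry and dimensions.

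\textbf{Main obstacle: tracking the constants.} Once (H1)--(H7) are in place, the quadratic estimate is a black-box consequence of \cite[Thm.~3.3]{Laplace-Extrapolation}, but the real work is to verify that the constant $C$ in the final estimate depends \emph{only} on ellipticity, dimensions, and geometry --- as asserted. This requires re-reading the proof of \cite[Thm.~3.3]{Laplace-Extrapolation} (a stopping-time argument leading to a Carleson-measure estimate, on top of the abstract reduction to the principal part) and checking step by step that every implicit constant is assembled from the constants appearing in (H1)--(H7), i.e.\ from $\lambda$, $\Lambda$, $d$, $m$, the Ahlfors constants of \ref{D} and \ref{Omega}, and the bi-Lipschitz constants of the charts in \ref{N}. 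This bookkeeping is the principal --- though non-conceptual --- obstacle, and it is crucial for the later holomorphic-dependence and non-autonomous applications that motivate the uniform tracking announced in the introduction.
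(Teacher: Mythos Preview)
Your proposal is correct and follows exactly the route the paper takes: the proposition is quoted as \cite[Thm.~3.3]{Laplace-Extrapolation}, with the surrounding text noting that (H1)--(H7) were verified in \cite[Sec.~6.1]{Laplace-Extrapolation} except for (H2), which is precisely the ellipticity reformulation~\eqref{Ellipticity Dirac}, and (H7), which was conditional on Assumption~(E) until \cite[Thm.~4.4]{Darmstadt-KatoMixedBoundary} supplied it unconditionally. Your additional paragraph on tracking the constants is not in the paper's treatment of this particular proposition but is consistent with the paper's stated concern elsewhere about uniform dependence on ellipticity and geometry.
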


By McIntosh's theorem~\cite{McIntosh} \emph{quadratic estimates} as above imply boundedness of the $\H^\infty(\S_\psi)$-calculus of any angle $\psi \in (\omega_\B, \pi/2)$ for the \emph{restriction of $\Pi_B$} to $\cl{\Rg(\Pi_B)}$, which is a one-to-one bisectorial operator. See also \cite[Thm.~3.4.11]{EigeneDiss}. The bound for the functional calculus depends on the angle and the resolvent bounds for $\Pi_B$ as is easily seen from the proofs in \cite{McIntosh} or \cite{EigeneDiss}. Hence, we obtain the

\begin{proof}[Proof of Theorem~\ref{Thm: Kato}]
We have just seen that Lemma~\ref{Lem: Abstract Kato} applies to $\Pi_B$ and yields $\dom((\Pi_B^2)^{1/2}) = \dom(\Pi_B)$ with equivalent graph norms. This implies $\dom(L^{1/2}) = \IW^{1,2}(\Omega)$ with equivalent norms upon restricting to the first coordinate in the $3 \times 3$ operator matrices. Implied constants in this argument depend on geometry and ellipticity.
\end{proof}

\subsection{Holomorphic dependence}

Let us discuss holomorphic dependence in the spirit of Theorem~\ref{Thm: Holomorphy} in the case $p=2$.  We assume some familiarity with vector-valued holomorphic functions and refer to \cite[App.~A]{ABHN} for background information. 

Henceforth, let $O \subseteq \IC$ be an open set and $A(z)$ be coefficient matrices as above that depend holomorphically on $z \in O$. We assume that all of them satisfy the ellipticity assumptions from Section~\ref{Subsec: L} with the same parameters $\lambda, \Lambda$ and we write $L_z$ for the corresponding operators defined through the sesquilinear forms $a_z$. By a slight abuse of notation, $\omega$ denotes the supremum of all accretivity angles of the operators $L_z$ so that $\sigma(L_z) \subseteq \cl{\S_\omega^+}$ for every $z$.

For all $u, v \in \IW^{1,2}(\Omega)$ also the map $z \mapsto a_z(u,v)$ is holomorphic on $O$. This follows for example from Morrera's theorem after changing the order of integration. Hence, we have a holomorphic family of sectorial forms in the sense of~\cite{Kato}. It follows that the associated operators $L_z$ are \emph{resolvent holomorphic}, that is to say, 
\begin{align*}
 O \to \Lop(\L^2(\Omega)^m), \qquad z \mapsto (\mu - L_z)^{-1}
\end{align*}
is holomorphic for every $\mu \in \IC \setminus \cl{\S_\omega^+}$. For a proof see \cite[Thm.~VII.4.2]{Kato} or the elegant argument presented in \cite{Vogt-Voigt}. By superposition, this carries over to objects in the functional calculus for the operators $L_z$. Two important examples are as follows.

\begin{corollary}
\label{Cor: Functional calculus L L2}
In the situation above, let $f \in \H^\infty(\S_\phi^+)$ for some $\phi \in (\omega, \pi)$. Then the map $O \to \Lop(\L^2(\Omega)^m)$, $z \mapsto f(L_z)$ is holomorphic.
\end{corollary}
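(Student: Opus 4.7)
My plan is to reduce the corollary to the subclass $\H_0^\infty(\S_\phi^+)$ by appealing to the Cauchy integral formula \eqref{Eq: Def H0infty calculus}, and then to pass to arbitrary $f \in \H^\infty(\S_\phi^+)$ by combining the convergence lemma recalled in Section~\ref{Subsec: Functional calculus} with Vitali's theorem. The key preliminary observation is that, since every $L_z$ is maximal $\omega$-accretive with the same angle $\omega$, for any $\nu \in (\omega,\phi)$ the resolvents satisfy $\|(\mu - L_z)^{-1}\|_{\L^2\to\L^2} \leq C_\nu/|\mu|$ for all $\mu \in \bd \S_\nu^+$ with a constant $C_\nu$ \emph{independent of} $z \in O$, and Proposition~\ref{Prop: Hinfty on L2} gives the uniform bound $\|g(L_z)\|_{\L^2\to\L^2} \leq 4\|g\|_\infty$ for every $g \in \H^\infty(\S_\phi^+)$.

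First I would handle $g \in \H_0^\infty(\S_\phi^+)$. For such $g$, formula \eqref{Eq: Def H0infty calculus} represents $g(L_z)$ as an absolutely convergent Bochner integral
\[
 g(L_z) = \frac{1}{2\pi\i}\int_{\bd \S_\nu^+} g(\mu)(\mu - L_z)^{-1}\;\d\mu.
\]
The integrand $z \mapsto g(\mu)(\mu-L_z)^{-1}$ is holomorphic on $O$ for each fixed $\mu$ by the resolvent holomorphy recorded just above the corollary, and is dominated on $\bd \S_\nu^+$ by the $z$-independent integrable majorant $C_\nu |g(\mu)|/|\mu|$, the integrability of which is precisely the defining decay of $\H_0^\infty$. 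Differentiating under the integral (or, equivalently, verifying the hypotheses of Morera's theorem after Fubini) yields that $z \mapsto g(L_z)$ is a holomorphic $\Lop(\L^2(\Omega)^m)$-valued function on $O$.

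For general $f \in \H^\infty(\S_\phi^+)$, set $e(z) := z(1+z)^{-2}$, so that the approximants $f_n := e^{1/n} f$ belong to $\H_0^\infty(\S_\phi^+)$; by the previous step, each $z \mapsto f_n(L_z)$ is holomorphic. The convergence lemma gives $f_n(L_z) \to f(L_z)$ strongly on $\L^2(\Omega)^m$ for every fixed $z$, while the uniform calculus bound from Proposition~\ref{Prop: Hinfty on L2} provides $\|f_n(L_z)\|_{\L^2\to\L^2} \leq 4\|f\|_\infty$ across all $z \in O$. Consequently, for every $u, v \in \L^2(\Omega)^m$ the scalar functions $z \mapsto \scal{f_n(L_z)u}{v}$ form a locally uniformly bounded sequence of holomorphic functions on $O$ converging pointwise to $z \mapsto \scal{f(L_z)u}{v}$. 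Vitali's theorem identifies the limit as holomorphic; since $u,v$ are arbitrary, $z \mapsto f(L_z)$ is weakly, and hence norm, holomorphic from $O$ into $\Lop(\L^2(\Omega)^m)$.

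The only point that needs genuine care is the uniformity in $z$ of both the resolvent estimate on $\bd \S_\nu^+$ and the $\H^\infty$ bound. Both uniformities follow from the assumption that all coefficient matrices $A(z)$ share the same ellipticity parameters $\lambda, \Lambda$ and therefore admit a common bound $\omega$ on their accretivity angles. Once this is in place, the integral argument for $\H_0^\infty$ and the Vitali-type passage to $\H^\infty$ are routine, so I expect no substantial obstacle beyond bookkeeping.
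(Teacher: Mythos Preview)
Your argument is correct and follows the same route as the paper: holomorphy for $g\in\H_0^\infty(\S_\phi^+)$ via the Cauchy integral \eqref{Eq: Def H0infty calculus} together with resolvent holomorphy and Fubini/Morera, then passage to general $f\in\H^\infty(\S_\phi^+)$ by the convergence lemma and Vitali's theorem using the uniform bound from Proposition~\ref{Prop: Hinfty on L2}. One cosmetic point: strictly speaking $\|f_n(L_z)\|\leq 4\|f_n\|_\infty$ rather than $4\|f\|_\infty$, but since $\|f_n\|_\infty\to\|f\|_\infty$ this still gives the uniform bound in $n$ and $z$ needed for Vitali.
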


\begin{proof}
If $f \in \H_0^\infty(\S_\phi^+)$, then the claim follows from Morrera's theorem after changing the order of integration in the integral representation of $f(L_z)$. In the general case we conclude by Vitali's theorem: Indeed, as in Section~\ref{Subsec: Functional calculus} we can take a bounded sequence $\{f_n\}_n$ in $\H_0^\infty(\S_\phi^+)$ that converges pointwise to $f$ such that for every $z \in O$ we have strong convergence $f_n(L_z) \to f(L_z)$ on $\L^2(\Omega)^m$. The missing hypothesis for Vitali's theorem, that is the uniform bound in $n$ and $z$ for the holomorphic functions $z \mapsto f_n(L_z)$, is due to Proposition~\ref{Prop: Hinfty on L2}.
\end{proof}

\begin{corollary}
\label{Cor: Square root holomorphic L2}
In the situation above the map $O \to \Lop(\IW^{1,2}(\Omega), \L^2(\Omega)^m)$, $z \mapsto L_z^{1/2}$ is holomorphic.
\end{corollary}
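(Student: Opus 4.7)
The plan is to reduce to the regularised statement covered by Corollary~\ref{Cor: Functional calculus L L2} and pass to the limit via Vitali's theorem, relying throughout on the fact that the constants in Theorem~\ref{Thm: Kato} depend only on ellipticity, dimensions, and geometry. Since the ellipticity parameters $\lambda,\Lambda$ are fixed on $O$, these bounds are automatically uniform in $z \in O$, which is the essential input.

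First I will introduce the regularisers $f_\eps(w) := w^{1/2}(1+\eps w)^{-1}$ for $\eps > 0$ and observe that each $f_\eps$ belongs to $\H_0^\infty(\S^+_\phi)$ for any $\phi \in (\omega, \pi)$, since it decays like $|w|^{1/2}$ near $0$ and like $|w|^{-1/2}$ near $\infty$. Corollary~\ref{Cor: Functional calculus L L2} therefore applies and yields that $z \mapsto f_\eps(L_z)$ is holomorphic from $O$ into $\Lop(\L^2(\Omega)^m)$ for each fixed $\eps > 0$.

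Next, for $u \in \IW^{1,2}(\Omega) = \dom(L_z^{1/2})$ (Theorem~\ref{Thm: Kato}), the functional calculus factorises
\begin{align*}
  f_\eps(L_z) u = (1+\eps L_z)^{-1} L_z^{1/2} u.
\end{align*}
Maximal accretivity of $L_z$ gives $\|(1+\eps L_z)^{-1}\|_{\L^2 \to \L^2} \leq 1$ and strong convergence $(1+\eps L_z)^{-1} \to \Id$ on $\L^2(\Omega)^m$ as $\eps \to 0$, so $f_\eps(L_z) u \to L_z^{1/2} u$ in $\L^2(\Omega)^m$ pointwise in $z \in O$. Combined with Theorem~\ref{Thm: Kato} this yields
\begin{align*}
  \|f_\eps(L_z) u\|_2 \leq \|L_z^{1/2} u\|_2 \leq C \|u\|_{\IW^{1,2}(\Omega)}
\end{align*}
with $C$ independent of $\eps > 0$ and $z \in O$. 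Vitali's theorem, applied to the locally uniformly bounded holomorphic sequence $\{z \mapsto f_{1/n}(L_z) u\}_{n \in \IN}$, then shows that $z \mapsto L_z^{1/2} u$ is holomorphic as an $\L^2(\Omega)^m$-valued function on $O$.

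It remains to upgrade this pointwise-in-$u$ strong holomorphy to norm holomorphy of $z \mapsto L_z^{1/2}$ into $\Lop(\IW^{1,2}(\Omega), \L^2(\Omega)^m)$. This is the classical principle that strong operator-valued holomorphy together with local operator-norm bounds implies norm holomorphy via the Cauchy integral formula, see \cite[App.~A]{ABHN}; the requisite uniform bound $\|L_z^{1/2}\|_{\IW^{1,2}(\Omega) \to \L^2(\Omega)^m} \leq C$ is again supplied by Theorem~\ref{Thm: Kato}. I anticipate no separate obstacle: the entire technical substance of the argument is absorbed into the careful tracking of constants performed in Section~\ref{Sec: L2}, without which the Vitali step would fail.
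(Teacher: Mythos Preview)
Your proof is correct and follows essentially the same strategy as the paper: uniform bounds from Theorem~\ref{Thm: Kato}, reduction to strong holomorphy of $z \mapsto L_z^{1/2}u$ via \cite[Prop.~A.3]{ABHN}, approximation by a family in $\H_0^\infty$ to which Corollary~\ref{Cor: Functional calculus L L2} applies, and passage to the limit by Vitali. The only cosmetic difference is the choice of regulariser: you use $f_\eps(w) = w^{1/2}(1+\eps w)^{-1}$, whereas the paper truncates the resolution of the identity \eqref{Eq: Resolution of the identity} to write $L_z^{1/2}u = \lim_n F_n(L_z)L_z^{1/2}u$ with $F_n(\mu) = \tfrac{2}{\sqrt{\pi}}\int_{2^{-n}}^{2^n}(t^2\mu)^{1/2}\e^{-t^2\mu}\,\tfrac{\d t}{t}$; both choices lead to the same Vitali argument, and yours is arguably the more direct one.
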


\begin{proof}
The map under consideration is uniformly bounded on $O$ thanks to Theorem~\ref{Thm: Kato}. Hence, it suffices to check holomorphy of $z \mapsto L_z^{1/2}u$ for every $u \in \IW^{1,2}(\Omega)$, see \cite[Prop.~A.3]{ABHN} for this reduction. Applying $L_z^{1/2}$ on both side of \eqref{Eq: Resolution of the identity} yields
\begin{align*}
 L_z^{1/2}u = \lim_{n \to \infty} \frac{2}{\sqrt{\pi}} \int_{2^{-n}}^{2^n} L_z \e^{-t^2 L_z} u \; \d t =: F_n(L_z) L_z^{1/2}u,
\end{align*}
with convergence in $\L^2(\Omega)^m$. Here, $F_n(\mu) = \frac{2}{\sqrt{\pi}} \int_{2^{-n}}^{2^n} (t^2\mu)^{1/2} \e^{-t^2 \mu} \frac{\d t}{t}$ are bounded holomorphic functions on any sector contained in the right complex halfplane and a substitution reveals a uniform bound in $n$ and $\mu$. By the first inequality above, $L_z^{1/2}u$ is the pointwise limit of a sequence of holomorphic functions on $O$. And taking into account Proposition~\ref{Prop: Hinfty on L2} and Theorem~\ref{Thm: Kato}, the second one means that this sequence is uniformly bounded in $n$ and $z$. As before, Vitali's theorem yields the claim.
\end{proof}

\section{Off-diagonal estimates}
\label{Sec: Properties of the semigroup}

In this section we establish $\L^p \to \L^2$ off-diagonal estimates for the semigroup generated by $-L$ and related families. They are the proper substitute for Gaussian kernel bounds in our context and play a crucial role in all subsequent sections. Here, they shall already lead us to the proof Theorem~\ref{Thm: Range for Lp realization}.

\begin{definition}
\label{Def: OD estimates}
Let $J \subseteq \IC$ and $\cT = \{T(z)\}_{z \in J}$ a family of bounded linear operators $\L^2(\Xi)^{m_1} \to \L^2(\Xi)^{m_2}$, where $m_1, m_2 \in \IN$ and $\Xi \subseteq \R^d$ is (Lebesgue) measurable. Given $1 \leq p \leq q \leq \infty$, we say that $\cT$ satisfies \emph{$\L^p \to \L^q$ off-diagonal estimates} if for some constants $C, c \in (0,\infty)$ the estimate
\begin{align*}
 \|T(z) u\|_{\L^q(F)^{m_2}} \leq C |z|^{\frac{d}{2q} - \frac{d}{2p}} \e^{-c \frac{\dist(E,F)^2}{|z|}} \|u\|_{\L^p(E)^{m_1}}
\end{align*}
holds for all $z \in J$, all measurable sets $E, F \subseteq \Xi$, and all $u \in \L^p(\Xi)^{m_1} \cap \L^2(\Xi)^{m_1}$ that are supported in $E$. We say that $\cT$ is \emph{$\L^p \to \L^q$ bounded} if for all $u \in \L^p(\Xi)^{m_1} \cap \L^2(\Xi)^{m_1}$,
\begin{align*}
 \|T(z) u\|_{\L^q(\Xi)^{m_2}} \leq C |z|^{\frac{d}{2q} - \frac{d}{2p}} \|u\|_{\L^p(\Xi)^{m_1}}.
\end{align*}
In the case $p=q$ we simply speak of \emph{$\L^p$ off-diagonal estimates} and \emph{$\L^p$ boundedness}
\end{definition}

We begin with $\L^2 \to \L^2$ off-diagonal bounds.

\begin{proposition}
\label{Prop: OD estimates for semigroup on L2}
Suppose that $\Omega \subseteq \R^d$ is a bounded domain. Let $\psi \in [0, \pi/2 - \omega)$. Then $\{\e^{-zL}\}_{z \in \S^+_\psi}$, $\{zL\e^{-zL}\}_{z \in \S^+_\psi}$, and $\{\sqrt{z}\nabla \e^{-zL}\}_{z \in \S^+_\psi}$ satisfy $\L^2$ off-diagonal estimates and implicit constants depend on $\psi$, ellipticity, dimensions, and the diameter of $\Omega$.
\end{proposition}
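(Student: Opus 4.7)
The proof uses a classical Davies exponential perturbation for $\{\e^{-zL}\}$, a Cauchy integral formula to pass to $\{zL\e^{-zL}\}$, and a Caccioppoli-type cut-off argument to deal with the gradient family.

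\emph{Davies perturbation.} Fix a bounded $1$-Lipschitz function $\eta: \R^d \to \R$ and $\rho > 0$. Since multiplication by $\e^{\pm \rho \eta}$ preserves $\IW^{1,2}(\Omega)$, for $z = \sigma \e^{\i\theta}$ with $|\theta| \leq \psi$, $u(\sigma) := \e^{-zL}u_0$, and $v := \e^{\rho \eta} u$, the identity $\partial_\sigma u = -\e^{\i\theta} L u$ gives
\begin{align*}
\partial_\sigma \|v\|_2^2 = -2\Re \bigl( \e^{\i\theta}\fa(u, \e^{2\rho\eta}u) \bigr),
\end{align*}
and a substitution yields $\fa(u, \e^{2\rho\eta}u) = \fa(v, v) + R_\rho(v)$ with $|R_\rho(v)| \leq C \rho \|v\|_2 \|\nabla v\|_2 + C\rho^2 \|v\|_2^2$, where $C$ depends only on $\Lambda$ and $d$. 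Because $\fa(v,v) \in \cl{\S^+_\omega}$ and $\psi + \omega < \pi/2$, one has $\Re(\e^{\i\theta} \fa(v,v)) \geq \cos(\psi + \omega) \lambda (\|v\|_2^2 + \|\nabla v\|_2^2)$, so absorbing $R_\rho$ by AM--GM produces $\partial_\sigma \|v\|_2^2 \leq C(1 + \rho^2) \|v\|_2^2$, i.e., $\|v(\sigma)\|_2 \leq \e^{C(1+\rho^2)\sigma/2}\|v(0)\|_2$. Given $E, F \subseteq \Omega$ with $d_{EF} := \dist(E, F) > 0$ and $u_0$ supported in $E$, I choose $\eta(x) := \min(\dist(x, E), \diam \Omega)$, which is bounded, $1$-Lipschitz, vanishes on $E$, and is at least $d_{EF}$ on $F$. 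This gives $\|\e^{-zL}u_0\|_{\L^2(F)} \leq \e^{-\rho d_{EF} + C(1+\rho^2) |z|/2} \|u_0\|_2$, and optimizing over $\rho$ produces the Gaussian factor $\e^{-c d_{EF}^2/|z|}$ up to a multiplicative constant depending on $\diam(\Omega)$. For $|z| \geq \diam(\Omega)^2$ the Gaussian factor is bounded away from zero, so the desired bound follows from contractivity of the semigroup on $\L^2$.

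\emph{Derivative and gradient families.} Since $\zeta \mapsto \e^{-\zeta L}$ is holomorphic on $\S^+_{\pi/2 - \omega}$, Cauchy's formula gives
\begin{align*}
z L \e^{-zL} = -\frac{z}{2\pi\i} \oint_{|\zeta - z| = r|z|} \frac{\e^{-\zeta L}}{(\zeta - z)^2} \d \zeta
\end{align*}
for $r = r(\psi) > 0$ chosen so small that the contour stays in some $\S^+_{\psi'}$ with $\psi' \in (\psi, \pi/2 - \omega)$. Taking the $\L^2(F)$-norm inside the integral and inserting the off-diagonal estimate on $\S^+_{\psi'}$ transfers the Gaussian decay to $\{z L \e^{-zL}\}$. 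For the gradient family, I pick $\varphi \in \C_0^\infty(\R^d)$ with $\varphi = 1$ on $F$, $\supp \varphi \subseteq F^* := \{x : \dist(x,F) \leq d_{EF}/4\}$, and $\|\nabla \varphi\|_\infty \leq 8/d_{EF}$. Since each $\W^{1,2}_{D_k}(\Omega)$ is stable under multiplication by $\C_0^\infty(\R^d)$, $\varphi v \in \IW^{1,2}(\Omega)$ for $v := \e^{-zL}u_0$. Applying G{\aa}rding to $\varphi v$ and using the standard rearrangement $\fa(\varphi v, \varphi v) = \fa(v, \varphi^2 v) + (\text{cut-off errors})$ together with $\fa(v, \varphi^2 v) = \scal{L v}{\varphi^2 v}$, then absorbing the gradient contributions, yields
\begin{align*}
\|\varphi \nabla v\|_2^2 \leq C \bigl( \|L v\|_{\L^2(F^*)} \|v\|_{\L^2(F^*)} + (1 + d_{EF}^{-2}) \|v\|_{\L^2(F^*)}^2 \bigr).
\end{align*}
Multiplying by $|z|$ and inserting the off-diagonal bounds for $\{\e^{-zL}\}$ and $\{zL\e^{-zL}\}$ on the pair $(E, F^*)$ (whose distance is at least $3 d_{EF}/4$), together with the elementary fact that $t^{-1} \e^{-ct}$ is bounded on $[1, \infty)$, produces the required estimate.

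\emph{Main obstacle.} The delicate point is the Davies step: the G{\aa}rding estimate has to survive the complex rotation by $\e^{\i\theta}$ and the two perturbative terms (of orders $\rho$ and $\rho^2$) must be absorbed so that the resulting exponent in $\sigma$ grows no worse than $\rho^2$. This is precisely where the restriction $\psi < \pi/2 - \omega$ enters in an essential way.
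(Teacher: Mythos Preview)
Your argument is correct, but it diverges from the paper's in how the three families are handled, and this is worth highlighting.

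\textbf{Comparison.} The paper runs the Davies perturbation at the level of the operator: it defines the twisted form $\fa_{\rho,\varphi}(u,v) = \fa(\e^{-\rho\varphi}u,\e^{\rho\varphi}v)$ and observes that after shifting by $2c(\rho^2+\rho)$ one obtains a maximal accretive operator $L_{\rho,\varphi}+2c(\rho^2+\rho)$ with angle independent of $\rho$. The universal $\H^\infty$-bound (Proposition~\ref{Prop: Hinfty on L2}) then controls $\e^{-tL_{\rho,\varphi}}$ \emph{and} $tL_{\rho,\varphi}\e^{-tL_{\rho,\varphi}}$ simultaneously, and ellipticity of the twisted form gives $\sqrt{t}\nabla\e^{-tL_{\rho,\varphi}}$ as well; undoing the twist yields all three off-diagonal estimates in one stroke. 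Complex $z$ is handled by the substitution $L \mapsto \e^{\i\arg z}L$, which stays in the same class. Your route is more sequential: you obtain the semigroup first by differentiating $\|\e^{\rho\eta}u(\sigma)\|_2^2$ (handling complex $z$ directly via the rotation $\Re(\e^{\i\theta}\fa(v,v))\geq\cos(\psi+\omega)\Re\fa(v,v)$), then transfer to $zL\e^{-zL}$ via Cauchy's integral formula, and finally reach the gradient by a localized Caccioppoli inequality. The paper's approach is cleaner and gets all three estimates with uniform work, whereas yours is more modular and perhaps more transparent about where each ingredient is used; both give the same dependence of constants.

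\textbf{One small gap.} Your Caccioppoli step needs $d_{EF}>0$ to build the cut-off, and the bound you state carries the prefactor $|z|(1+d_{EF}^{-2})$. You close this only in the regime $d_{EF}^2\geq |z|$ (this is what your remark ``$t^{-1}\e^{-ct}$ bounded on $[1,\infty)$'' covers, with $t=d_{EF}^2/|z|$). In the complementary regime $d_{EF}^2<|z|$ (which includes $d_{EF}=0$) the Gaussian factor in the target estimate is bounded below, so you merely need the global bound $\|\sqrt{z}\nabla\e^{-zL}\|_{\L^2\to\L^2}\leq C$ on $\S^+_\psi$; this follows from G{\aa}rding applied to $v=\e^{-zL}u_0$ together with the $\L^2$-bounds on $\e^{-zL}$ and $zL\e^{-zL}$ you already have. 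You mention this split for the semigroup but should make it explicit for the gradient as well.
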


This will follow by Davies' perturbation method \cite{Riesz-Transforms, LSV, Davies} and we shall indicate the major steps in order to help the reader through. To get the method running, we need the following invariance property.

\begin{lemma}
\label{Lem: Form domain invariant under multiplication}
Every Lipschitz continuous function $\varphi: \R^d \to \R$ induces a bounded multiplication operator on $\IW^{1,2}(\Omega)$.
\end{lemma}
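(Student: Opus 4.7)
My plan is to prove the claim componentwise: since $\IW^{1,2}(\Omega) = \prod_{k=1}^m \W^{1,2}_{D_k}(\Omega)$, it suffices to check that multiplication by $\varphi$ is a bounded operator on each factor $\W^{1,2}_{D_k}(\Omega)$ with norm controlled by a quantity depending only on $\varphi$ and $\Omega$. Because $\varphi$ is Lipschitz on $\R^d$ and $\Omega$ is bounded, the restriction of $\varphi$ to $\cl{\Omega}$ lies in $\W^{1,\infty}(\Omega)$, with $\|\varphi\|_{\L^\infty(\Omega)} \leq |\varphi(x_0)| + \mathrm{Lip}(\varphi) \diam(\Omega)$ for any fixed $x_0 \in \Omega$ and $\|\nabla \varphi\|_{\L^\infty(\R^d)} \leq \mathrm{Lip}(\varphi)$. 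Setting $M := \|\varphi\|_{\L^\infty(\Omega)} + \mathrm{Lip}(\varphi)$, the Leibniz rule (valid a.e. for Lipschitz times Sobolev functions) gives, for any $u \in \W^{1,2}_{D_k}(\Omega)$, the pointwise identity $\nabla(\varphi u) = u \nabla \varphi + \varphi \nabla u$ a.e.\ in $\Omega$, from which
\begin{align*}
 \|\varphi u\|_2 + \|\nabla (\varphi u)\|_2 \leq (2M + 1) \bigl(\|u\|_2 + \|\nabla u\|_2\bigr).
\end{align*}

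What still requires argument is the boundary condition, namely that $\varphi u$ actually lies in $\W^{1,2}_{D_k}(\Omega)$ and not merely in $\W^{1,2}(\Omega)$. To handle this, I pick a defining sequence $u_n = \psi_n|_\Omega$ with $\psi_n \in \C_0^\infty(\R^d)$, $\supp(\psi_n) \cap D_k = \emptyset$, and $u_n \to u$ in $\W^{1,2}(\Omega)$, which exists by definition of $\W^{1,2}_{D_k}(\Omega)$. For each $n$ I mollify $\varphi$ by convolution with a standard mollifier, $\varphi_\eps := \rho_\eps * \varphi \in C^\infty(\R^d)$. Then $\varphi_\eps \psi_n \in \C_0^\infty(\R^d)$ and crucially $\supp(\varphi_\eps \psi_n) \subseteq \supp(\psi_n)$, which stays disjoint from $D_k$. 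Hence the restrictions $(\varphi_\eps \psi_n)|_\Omega$ all belong to $\C_{D_k}^\infty(\Omega) \subseteq \W^{1,2}_{D_k}(\Omega)$.

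Next I send $\eps \to 0$ at fixed $n$: since $\varphi_\eps \to \varphi$ locally uniformly and $\nabla \varphi_\eps \to \nabla \varphi$ a.e.\ with uniform bound $\|\nabla \varphi_\eps\|_\infty \leq \mathrm{Lip}(\varphi)$, dominated convergence yields $\varphi_\eps \psi_n \to \varphi \psi_n$ in $\W^{1,2}(\Omega)$. Since $\W^{1,2}_{D_k}(\Omega)$ is closed, this places $\varphi u_n = (\varphi \psi_n)|_\Omega$ in $\W^{1,2}_{D_k}(\Omega)$. Now sending $n \to \infty$ and using the boundedness estimate above with $u$ replaced by $u - u_n$ shows that $\varphi u_n \to \varphi u$ in $\W^{1,2}(\Omega)$. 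By closedness once more, $\varphi u \in \W^{1,2}_{D_k}(\Omega)$ with the desired norm bound.

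The only mildly delicate point is the approximation of $\varphi u$ by admissible test functions, which is why the double approximation (first mollify $\varphi$, then use the defining sequence for $u$) is needed; the interior Leibniz calculation is routine given that $\varphi \in \W^{1,\infty}(\Omega)$.
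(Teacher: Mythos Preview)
Your proof is correct and follows essentially the same approach as the paper: both establish boundedness on $\W^{1,2}(\Omega)^m$ via the product rule and then verify invariance of the closed subspace $\IW^{1,2}(\Omega)$ by approximation through $\C_{D_k}^\infty$ functions and mollification. The only cosmetic difference is that the paper mollifies the product $\varphi u$ directly (for $u \in \C_{D_k}^\infty(\R^d)$, the product already lies in $\W^{1,2}(\R^d)$ with compact support away from $D_k$), whereas you mollify $\varphi$ and run a double limit; both routes are equally valid.
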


\begin{proof}
Boundedness of the multiplication operator with respect to the $\W^{1,2}(\Omega)^m$-norm follows from the product rule. Hence, it suffices to check that the closed subspace $\IW^{1,2}(\Omega)$ is left invariant and by density this will follow from $\varphi u \in \IW^{1,2}(\Omega)|_\Omega$ for $u \in \prod_{k=1}^m \C_{D_k}^\infty(\R^d)$. But in this case $\varphi u \in \W^{1,2}(\R^d)^m$ with compact support and each of its components having support away from the respective Dirichlet part, so that approximants in $\prod_{k=1}^m \C_{D_k}^\infty(\R^d)$ for the $\W^{1,2}(\R^d)^m$-topology can be constructed via mollification with smooth kernels.
\end{proof}

\begin{proof}[Proof of Proposition~\ref{Prop: OD estimates for semigroup on L2}]

We begin with off-diagonal bounds for $z = t >0$. Let $\varphi: \R^d \to \R$ be Lipschitz continuous with $\|\nabla \varphi \|_\infty \leq 1$ and let $\rho >0$; both yet to be specified. Since by the preceding lemma $\IW^{1,2}(\Omega)$ is invariant under multiplication with $\e^{\pm \rho \varphi}$, we can define $L_{\rho, \varphi}:= \e^{\rho \varphi} L \e^{- \rho \varphi}$ by means of the form method using the bounded sesquilinear form
\begin{align*}
 \fa_{\rho, \varphi}: \IW^{1,2}(\Omega) \times \IW^{1,2}(\Omega) \to \IC, \quad (u,v) \mapsto \fa(\e^{-\rho \varphi}u, \e^{\rho \varphi} v).
\end{align*}
In order to see that $\fa_{\rho, \varphi}$ is sectorial, we multiply out the expression for $\fa(\e^{-\rho \varphi}u, \e^{\rho \varphi} u)$ obtained from the definition of $\fa$ in \eqref{a}, use boundedness and ellipticity of $\fa$, and control the error terms $\fa(u,u)-\fa(\e^{-\rho \varphi}u, \e^{\rho \varphi} u)$ by means of Young's inequality with $\eps$. This results in the two estimates
\begin{align*}
|\fa_{\rho, \varphi}(u,u)| \leq 2\Lambda(d+1) (\|u\|_2^2 + \|\nabla u\|_2^2) + c (\rho^2+\rho) \|u\|_2^2
\end{align*}
and
\begin{align}
\label{Eq1: OD estimates for semigroup on L2}
\Re \fa_{\rho, \varphi}(u,u) \geq \frac{\lambda}{2}(\|u\|_2^2 + \|\nabla u\|_2^2) - c (\rho^2 + \rho) \|u\|_2^2,
\end{align}
where $c \in (0,\infty)$ depends upon ellipticity and dimensions. Thus, $L_{\rho, \varphi} + 2c(\rho^2 + \rho)$ is maximal accretive with angle $\arctan(4\Lambda(d+1)/\lambda)$. The universal bound for its $\H^\infty$-calculus yields
\begin{align}
\label{Eq2: OD estimates for semigroup on L2}
 \|\e^{-tL_{\rho, \varphi}} \|_{\L^2 \to \L^2} + \|t (L_{\rho, \varphi} + 2c (\rho^2 + \rho)) \e^{-tL_{\rho, \varphi}} \|_{\L^2 \to \L^2}  \leq  4\e^{2c (\rho^2 + \rho) t} \qquad (t>0),
\end{align}
see Proposition~\ref{Prop: Hinfty on L2}. Moreover, we have by definition 
\begin{align*}
 \fa_{\rho, \varphi}(u,u) + 2c (\rho^2 + \rho) \|u\|_2^2 = \scal{L_{\rho,\varphi}u + 2c(\rho^2 + \rho)u}{u} \qquad (u \in \dom(L_{\rho,\varphi}))
\end{align*}
and as a holomorphic semigroup maps into the domain of its generator, the previous bounds along with the ellipticity estimate \eqref{Eq1: OD estimates for semigroup on L2} imply
\begin{align}
\label{Eq3: OD estimates for semigroup on L2}
 \|\sqrt{t} \nabla \e^{-tL_{\rho, \varphi}} \|_{\L^2 \to \L^2} \leq 4 \Big(\frac{\lambda}{2}\Big)^{-1/2}  \e^{2c (\rho^2 + \rho) t} \qquad (t>0).
\end{align}

Now, let $E, F \subseteq \Omega$ be measurable sets, and let $u \in \L^2(\Omega)^m$ be supported in $E$. We specialize $\varphi(x) = \dist(x,E)$ and obtain 
\begin{align*}
 \e^{-tL} u = \e^{- \rho \varphi} \e^{\rho \varphi} \e^{-tL} \e^{- \rho \varphi} u =  \e^{-\rho \varphi} \e^{-tL_{\rho, \varphi}} u \qquad (t>0),
\end{align*}
where in the last step we used that the similarity of operators $L_{\rho, \varphi}:= \e^{\rho \varphi} L \e^{- \rho \varphi}$ inherits to resolvents and hence to the functional calculi. From \eqref{Eq2: OD estimates for semigroup on L2} we can infer 
\begin{align*}
 \|\e^{-tL} u \|_{\L^2(F)} 
\leq \e^{-\rho \dist(E,F)} \|\e^{-tL_{\rho, \varphi}} u \|_{\L^2(\Omega)}
\leq 4 \e^{2c (\rho^2 + \rho)t - \rho \dist(E,F)} \|u\|_{\L^2(E)},
\end{align*}
which on choosing $\rho := \frac{\dist(E,F)}{4c t}$ and recalling that $\Omega$ is bounded, becomes the off-diagonal bound 
\begin{align*}
 \|\e^{-tL} u \|_{\L^2(F)}  \leq 4 \e^{-\frac{\dist(E,F)^2}{8ct}} \e^{\frac{\dist(E,F)}{2}}\|u\|_{\L^2(E)} \leq 4 \e^{\frac{\diam(\Omega)}{2}} \e^{-\frac{\dist(E,F)^2}{8ct}} \|u\|_{\L^2(E)}.
\end{align*}
The estimates for $tL \e^{-tL}$ and $\sqrt{t} \nabla \e^{-tL}$ follow likewise from either \eqref{Eq2: OD estimates for semigroup on L2} or \eqref{Eq3: OD estimates for semigroup on L2}.

Finally, to treat the general case $z \in \S^+_\psi$, we replace $L$ by $\e^{\i \arg z} L$: Since $|\arg z|< \pi/2 - \omega$, this is an operator in the same class as $L$ and ellipticity constants of the corresponding form depend on $\lambda, \Lambda, \psi$. Hence, the first part of the proof applies with $t = |z|$ and the claim follows on noting $\e^{-zL} = \e^{-t \e^{\i \arg z} L}$.
\end{proof}

The subsequent proposition builds the bridge to $\L^p \to \L^p$ and $\L^p \to \L^2$ estimates. Going through the cycle of all five implication shows that for the semigroup all concepts are more or less equivalent if one allows a small play in the Lebesgue exponents.

\begin{proposition}
\label{Prop: Hypercontractivity}
Assume $\Omega$ satisfies Assumption~\ref{N}. Let $p \in [1,2)$. For $\psi \in [0, \pi/2 - \omega)$ put $\cS = \{\e^{-zL}\}_{z \in \S^+_\psi}$ and $\cN = \{\sqrt{z} \nabla \e^{-zL}\}_{z \in \S^+_\psi}$. Then the following hold.
\begin{enumerate}
 \item \label{i Hypercontractivity} If $\{\e^{-tL}\}_{t>0}$ is $\L^p$ bounded, then $\cS$ is $\L^p \to \L^2$ bounded.
 \item \label{ii Hypercontractivity} If $\cS$ is $\L^p \to \L^2$ bounded, then so is $\cN$.
 \item \label{iii Hypercontractivity} If $\cS$ is $\L^p \to \L^2$ bounded and $q \in (p,2)$, then $\cS$ satisfies $\L^q \to \L^2$ off-diagonal estimates.
 \item \label{iv Hypercontractivity} If $\cS$ satisfies $\L^p \to \L^2$ off-diagonal estimates, then so does $\cN$.
 \item \label{v Hypercontractivity} If $\cS$ and $\cN$ satisfy $\L^p \to \L^2$ off-diagonal estimates, then $\cS$ and $\cN$ are $\L^p$ bounded, respectively.
\end{enumerate}
\end{proposition}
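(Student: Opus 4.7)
The five implications form a loop, so each step should feed into the next without losing control over constants. Throughout I would reduce the complex-time case $z\in\S^+_\psi$ to $z=t>0$ by rotating the coefficients exactly as in the proof of Proposition~\ref{Prop: OD estimates for semigroup on L2}: the operator $\e^{\i\arg z}L$ belongs to the same ellipticity class, and $\e^{-zL}=\e^{-|z|\e^{\i\arg z}L}$, so all bounds carry over from $t=|z|>0$ with constants depending on $\psi$.

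For \eqref{i Hypercontractivity} I would run a Nash--Moser argument. Under Assumption~\ref{N} the Sobolev extension gives $\IW^{1,2}(\Omega)\hookrightarrow \L^{2^*}(\Omega)^m$, which interpolated with H\"older against $\L^p$ yields an inequality of the form $\|v\|_2^{2/(1-\theta)}\leq C\|v\|_p^{2\theta/(1-\theta)}(\|v\|_2^2+\|\nabla v\|_2^2)$ for a suitable $\theta\in(0,1)$ (depending on $p,d$). Applied to $v_t=\e^{-tL}u$, combined with the ellipticity identity $\tfrac{d}{dt}\|v_t\|_2^2=-2\Re\,a(v_t,v_t)\leq -2\lambda(\|v_t\|_2^2+\|\nabla v_t\|_2^2)$ and the hypothesized $\|v_t\|_p\leq M\|u\|_p$, this produces an ordinary differential inequality $G^\beta\lesssim \|u\|_p^{2(\beta-1)}(-G')$ for $G(t)=\|v_t\|_2^2$. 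Integrating yields $\|\e^{-tL}u\|_2\lesssim t^{d/4-d/(2p)}\|u\|_p$, as required. Part \eqref{ii Hypercontractivity} then follows directly from the composition $\sqrt{z}\nabla \e^{-zL}=(\sqrt{z}\nabla \e^{-(z/2)L})\circ \e^{-(z/2)L}$ together with the hypothesis and the $\L^2$-bound for $\sqrt{z}\nabla \e^{-(z/2)L}$ from Proposition~\ref{Prop: OD estimates for semigroup on L2}.

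Part \eqref{iii Hypercontractivity} is best done by Riesz--Thorin interpolation of the truncated family $T_z f := \ind_F \e^{-zL}(\ind_E f)$. By hypothesis, $\|T_z\|_{\L^p\to\L^2}\lesssim |z|^{d/4-d/(2p)}$ (no exponential factor), and from Proposition~\ref{Prop: OD estimates for semigroup on L2}, $\|T_z\|_{\L^2\to\L^2}\lesssim \e^{-c\dist(E,F)^2/|z|}$. Interpolating at $q$ determined by $1/q=\theta/p+(1-\theta)/2$ gives exactly $d/4-d/(2q)=\theta(d/4-d/(2p))$ for the smoothing exponent, while the exponential decay survives with constant $c(1-\theta)$. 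For \eqref{iv Hypercontractivity} I would use an annular decomposition. Write again $\sqrt{z}\nabla \e^{-zL}=(\sqrt{z}\nabla \e^{-(z/2)L})\circ \e^{-(z/2)L}$ and partition $\Omega$ into dyadic annuli $A_k=\{x:2^k\sqrt{|z|}\leq\dist(x,E)<2^{k+1}\sqrt{|z|}\}$. Applied to $u$ supported in $E$, the hypothesized $\L^p\to\L^2$ decay from $E$ to $A_k$ composed with the $\L^2$ decay from $A_k$ to $F$ produces, for each $k$, the factor $|z|^{d/4-d/(2p)}\exp(-c(\dist(E,A_k)^2+\dist(A_k,F)^2)/|z|)$. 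The triangle inequality and $\dist(E,A_k)\sim 2^k\sqrt{|z|}$ give $\dist(E,A_k)^2+\dist(A_k,F)^2\gtrsim 4^k|z|+\dist(E,F)^2$, so summing over $k$ yields $\sum_k \e^{-c' 4^k}\cdot \e^{-c'\dist(E,F)^2/|z|}<\infty$ and the desired bound.

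For \eqref{v Hypercontractivity} I would use a covering by balls $\{B_j\}$ of radius $\sqrt{|z|}$ with bounded overlap in $\R^d$. H\"older gives $\|\e^{-zL}u\|_{\L^p(B_k\cap\Omega)}\leq |B_k\cap\Omega|^{1/p-1/2}\|\e^{-zL}u\|_{\L^2(B_k\cap\Omega)}$ and $|B_k\cap\Omega|\leq|B_k|\lesssim |z|^{d/2}$ absorbs the factor $|z|^{d/4-d/(2p)}$ coming from the off-diagonal bound applied ball-by-ball, so that no $d$-Ahlfors regularity of $\Omega$ is needed and only the sign $p<2$ matters. The problem reduces to $\ell^p$-boundedness of the Schur-type matrix $M_{kj}=\e^{-c\dist(B_j,B_k)^2/|z|}$, which holds because $\sum_j M_{kj}+\sum_k M_{kj}\lesssim 1$ uniformly. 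The argument for $\cN$ is identical.

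\textbf{Main obstacle.} I expect \eqref{iv Hypercontractivity} to be the most delicate step: it requires the geometric distance algebra to combine two different kinds of decay (an $\L^p\to\L^2$ smoothing-with-decay and a pure $\L^2$-decay) without losing the prefactor $|z|^{d/4-d/(2p)}$, and one must carefully handle the dichotomy $2^k\sqrt{|z|}\lessgtr\dist(E,F)$ to make the annular sum converge with the right exponential tail.
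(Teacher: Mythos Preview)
Your overall plan is sound and matches the paper's argument for parts \eqref{ii Hypercontractivity}, \eqref{iii Hypercontractivity}, and \eqref{v Hypercontractivity}. There is, however, a genuine gap in your reduction to complex time for part \eqref{i Hypercontractivity}, and your approach to \eqref{iv Hypercontractivity} is correct but more elaborate than necessary.

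The rotation trick from Proposition~\ref{Prop: OD estimates for semigroup on L2} does \emph{not} transfer to part \eqref{i Hypercontractivity}. The hypothesis there --- $\L^p$ boundedness of $\{\e^{-tL}\}_{t>0}$ --- is specific to $L$ and is not preserved under rotation: the real-time semigroup of $\e^{\i\arg z}L$ is $\{\e^{-t\e^{\i\arg z}L}\}_{t>0}$, which is precisely the complex-time semigroup of $L$ along a ray, and you have no $\L^p$ bound for it. (Rotation worked in Proposition~\ref{Prop: OD estimates for semigroup on L2} only because the $\L^2$ off-diagonal constants there depend solely on ellipticity, which \emph{is} preserved.) The paper handles complex time by a different, simpler device: pick $\psi' \in (\psi, \pi/2-\omega)$ and write $z = z' + t$ with $|\arg z'| = \psi'$ and $t>0$, so that $t \simeq |z|$; then $\e^{-zL} = \e^{-z'L}\e^{-tL}$, the first factor is an $\L^2$ contraction by maximal accretivity, and the second is handled by your Nash argument on the real axis.

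For \eqref{iv Hypercontractivity} your annular decomposition can be made to work, but you must treat the central region $\{\dist(\cdot,E) < \sqrt{|z|}\}$ separately (otherwise the sum over negative $k$ diverges since $\e^{-c'4^k}\to 1$). The paper's argument is considerably shorter: split $\Omega$ into just two pieces, $G = \{x \in \Omega : \dist(x,F) \geq \dist(E,F)/2\}$ and its complement. On $G$ the outer $\L^2$ off-diagonal factor for $\sqrt{z}\nabla\e^{-(z/2)L}$ already carries the full exponential, while on ${}^cG$ one checks $\dist(E,{}^cG) \geq \dist(E,F)/2$, so the inner $\L^p\to\L^2$ factor for $\e^{-(z/2)L}$ does the job. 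No summation is required.
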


\begin{proof}
We begin with \eqref{i Hypercontractivity}. Let $u \in \L^2(\Omega)^m$ with $\|u\|_p = 1$. First, we establish the $\L^p \to \L^2$ bounds for the semigroup in the case $z = t > 0$. We obtain the interpolation inequality
 \begin{align*}
  \|v\|_{\L^2(\Omega)}^2 \lesssim \|v\|_{\IW^{1,2}(\Omega)}^{2 \theta} \|v\|_{\L^p(\Omega)}^{2-2\theta} \qquad (v \in \IW^{1,2}(\Omega)),
 \end{align*}
 where $1/\theta = 1+ 2p/(2d-pd)$, from the classical Gagliardo-Nirenberg inequality for functions on $\R^d$, see \cite[p.~125]{Nirenberg}, and the boundedness of the extension operators $E_k$, see Section~\ref{Subsec: Sobolev spaces}. Here, Assumption~\ref{N} was used. We apply this with $v = \e^{-tL} u$ and obtain from the assumption and ellipticity
 \begin{align*}
  \|\e^{-tL}u \|_2^2 \lesssim \|\e^{-tL}u \|_{\IW^{1,2}}^{2 \theta} \lesssim \big(\Re \fa (\e^{-tL}u, \e^{-tL}u)\big)^{\theta} = \big(\Re \scal{L\e^{-tL}u}{\e^{-tL}u}\big)^{\theta}.
 \end{align*}
 Hence, $f(t):=\|\e^{-tL}u\|_2^2$ satisfies the differential inequality
 \begin{align*}
  f(t) \leq C (-f'(t))^{\theta} \qquad (t>0),
 \end{align*}
 where $C>0$ depends on geometry and $\Jnorm{p}$, see \eqref{Jnorm}. If $f$ vanishes at some point of the interval $(t/2,t)$, then $f(t) = 0$ by the semigroup property and we are done. Otherwise, we obtain 
 \begin{align*}
  \frac{t}{2} \leq - \int_{t/2}^t \frac{C f'(s)}{f(s)^{1/\theta}} \; \d s \leq \frac{C \theta}{1- \theta} f(t)^{1 - 1/\theta} = \frac{C \theta}{1- \theta} f(t)^{-2p/(2d-pd)},
 \end{align*}
 which, by definition of $f$, is the required $\L^p \to \L^2$ estimate. In order to extend this bound to $z \in \S^+_\psi$, we put $\psi':= (\psi + \frac{\pi}{2} - \omega)/2$ and decompose $z = z' + t$, where $|\arg z'| = \psi'$ and $t>0$, so that $|z| \simeq |z'| \simeq t$ with implicit constants depending on $\psi$ and $\omega$. The claim then follows from the contractivity of the semigroup on $\L^2$ and the first part of the proof:
 \begin{align*}
   \|\e^{-zL}u  \|_2 \leq  \|\e^{-z'L}  \|_{\L^2 \to \L^2}  \|\e^{-tL}u  \|_2 \lesssim |t|^{\frac{d}{4} - \frac{d}{2p}} \simeq |z|^{\frac{d}{4} - \frac{d}{2p}}.
 \end{align*}
 
Next, \eqref{ii Hypercontractivity} follows from the semigroup law and the assertion for $\cS$. Indeed, it suffices to write
 \begin{align*}
  \sqrt{2z} \nabla \e^{-2zL} = \sqrt{2}(\sqrt{z} \nabla \e^{-z L}) \e^{-z L}
 \end{align*}
and concatenate the $\L^2$ bound of the first factor (Proposition~\ref{Prop: OD estimates for semigroup on L2}) with the assumed $\L^p \to \L^2$ bound for the second one. 

As for \eqref{iii Hypercontractivity}, we interpolate by means of the Riesz-Thorin theorem the assumed $\L^p \to \L^2$ bound with the $\L^2$ off-diagonal estimates provided by Proposition~\ref{Prop: OD estimates for semigroup on L2}. (Once we fixed the sets $E$, $F$, in the definition of off-diagonal estimates.)

Assertion \eqref{iv Hypercontractivity} follows by a refinement of the argument for \eqref{ii Hypercontractivity}. We let $E,F \subseteq \Omega$ measurable sets, $u \in \L^2(\Omega)^m$ with support in $E$ and $z \in \S^+_\psi$. We also use a measurable set $G \subseteq \Omega$ to be specified yet. By the semigroup law we have
\begin{align*}
 \|\sqrt{2z} \nabla \e^{-2zL} u\|_{\L^2(F)}
&\leq \sqrt{2} \bigg(\big\|\sqrt{z} \nabla \e^{-z L} \ind_G \e^{-zL} u\big\|_{\L^2(F)} + \big\|\sqrt{z} \nabla \e^{-zL} \ind_{{}^c G} \e^{-zL}u \big\|_{\L^2(F)} \bigg)
\intertext{and hence by assumption and $\L^2$ off-diagonal estimates for the gradient of the semigroup,}
& \leq CC' |z|^{\frac{d}{4}-\frac{d}{2p}} \bigg(\e^{-c\frac{\dist(G,F)^2}{|z|} -c'\frac{\dist(E,G)^2}{|z|}} + \e^{-c\frac{\dist({}^c G,F)^2}{|z|}-c'\frac{\dist(E,{}^c G)^2}{|z|}} \bigg) \|u\|_{\L^p(E)},
\end{align*}
where $C,C',c,c' \in (0,\infty)$. For the choice $G = \{x \in \Omega : \dist(x, F) \geq \dist(E,F)/2 \}$ we have $\dist(G,F)  \geq \dist(E,F)/2$ and $\dist(E, {}^c G) \geq \dist(E,F)/2$, which in turn yields the claim.

Eventually, \eqref{v Hypercontractivity} follows from the subsequent lemma applied to $T = \e^{-zL}$ or $T= \sqrt{z} \nabla \e^{-zL}$ on choosing $g(r) = C |z|^{d/4 - d/(2p)} \e^{-c r^2/|z|}$ and $s=\sqrt{|z|}$.\qedhere
\end{proof}

\begin{lemma}
\label{Lem: OD implies bounded}
Let $1 \leq p \leq q \leq \infty$ and $T$ a bounded linear operator $\L^2(\Xi)^{m_1} \to \L^2(\Xi)^{m_2}$, where $m_1, m_2 \in \IN$ and $\Xi \subseteq \R^d$ is measurable. If $T$ satisfies $\L^p \to \L^q$ off-diagonal estimates in the form 
\begin{align*}
 \|T u\|_{\L^q(F \cap \Xi)} \leq g(\dist(E,F)) \|u\|_{\L^p(E \cap \Xi)},
\end{align*}
whenever $E$, $F$ are closed axis-parallel cubes in $\R^d$ and $u \in \L^p(\Xi)^{m_1} \cap \L^2(\Xi)^{m_1}$ is supported in $E \cap \Xi$ and $g$ is some decreasing function. Then $T$ is $\L^p$ bounded with norm bounded by $s^{d/p - d/q} \sum_{k \in \IZ^d} g(s \max\{|k|/\sqrt{d} -1, 0\})$ for any $s>0$ provided this sum is finite.
\end{lemma}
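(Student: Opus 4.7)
The plan is to tile $\R^d$ by a family $\{Q_k\}_{k \in \IZ^d}$ of essentially disjoint closed axis-parallel cubes of side length $s$, say $Q_k = sk + s[0,1]^d$, decompose $u = \sum_k u \ind_{Q_k}$, and apply the hypothesis cube-by-cube before reassembling via a discrete convolution estimate.

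First I would record the elementary geometric fact that any two such cubes $Q_j$ and $Q_k$ have diameter $s\sqrt{d}$ and centers at Euclidean distance $s|k-j|$, so that $\dist(Q_j,Q_k) \geq s \max\{|k-j|/\sqrt{d}-1,\,0\}$. Since $g$ is decreasing, applying the off-diagonal hypothesis to the decomposition yields, for each fixed $j \in \IZ^d$,
\begin{align*}
 \|T u\|_{\L^q(Q_j \cap \Xi)^{m_2}} \leq \sum_{k \in \IZ^d} g\bigl(s \max\{|k-j|/\sqrt{d}-1,\,0\}\bigr) \, \|u\|_{\L^p(Q_k \cap \Xi)^{m_1}}.
\end{align*}

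Next I would pass from $\L^q$ to $\L^p$ on each target cube by H\"older's inequality. Since $p \leq q$ and $|Q_j| = s^d$, we have $\|Tu\|_{\L^p(Q_j \cap \Xi)^{m_2}} \leq s^{d/p - d/q} \|Tu\|_{\L^q(Q_j \cap \Xi)^{m_2}}$ (with the obvious modification when $q=\infty$). Setting $a_k := \|u\|_{\L^p(Q_k \cap \Xi)^{m_1}}$ and $b_k := g(s\max\{|k|/\sqrt{d}-1,\,0\})$, the previous display then reads as the discrete convolution inequality $\|Tu\|_{\L^p(Q_j \cap \Xi)^{m_2}} \leq s^{d/p-d/q} (b * a)(j)$ (up to a reflection that is irrelevant since $b_{-k}=b_k$).

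Finally I would raise to the $p$-th power and sum over $j \in \IZ^d$; by the essential disjointness of the $Q_k$ one has $\|u\|_{\L^p(\Xi)^{m_1}}^p = \sum_k a_k^p$, and Young's convolution inequality on $\ell^p(\IZ^d)$ gives $\|b * a\|_{\ell^p} \leq \|b\|_{\ell^1}\|a\|_{\ell^p}$. Taking $p$-th roots produces the estimate
\begin{align*}
 \|Tu\|_{\L^p(\Xi)^{m_2}} \leq s^{d/p-d/q}\Bigl(\sum_{k \in \IZ^d} g(s \max\{|k|/\sqrt{d}-1,\,0\})\Bigr) \|u\|_{\L^p(\Xi)^{m_1}},
\end{align*}
which is exactly the claim. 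No step is genuinely hard; the only issue requiring a little care is the geometric bound linking $\dist(Q_j,Q_k)$ to the Euclidean norm $|k-j|$ that appears in the statement, and a separate but routine verbatim adaptation covers the endpoint $p=\infty$ or $q=\infty$.
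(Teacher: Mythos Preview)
Your proposal is correct and follows essentially the same route as the paper's proof: tile $\R^d$ by cubes $Q_k$ of side $s$, use H\"older's inequality on each cube to pass between $\L^p$ and $\L^q$, apply the off-diagonal hypothesis together with the bound $\dist(Q_j,Q_k)\geq s\max\{|j-k|/\sqrt{d}-1,0\}$, and conclude by Young's inequality for discrete convolutions on $\ell^p(\IZ^d)$. The only cosmetic difference is the order in which H\"older and the off-diagonal estimate are invoked, and that the paper derives the distance bound via the $\ell^\infty$ metric rather than the diameter argument you sketch.
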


This is essentially \cite[Lem.~4.3]{Riesz-Transforms} but because of two somewhat confusing misprints, one in the statement and one in proof, we decided to include the argument.

\begin{proof}
Let $u \in \L^p(\Xi)^{m_1} \cap \L^2(\Xi)^{m_1}$. We partition $\R^d$ into closed, axis-parallel cubes $\{Q_k\}_{k \in \IZ^d}$ of sidelength $s$ with center $sk$ and let $u_k := \ind_{Q_k \cap \Xi} u$. From H\"older's inequality and the assumption we obtain
\begin{align*}
 \|T u\|_{\L^p(\Xi)}^p 
 &= \sum_{k \in \IZ^d} \|Tu\|_{\L^p(Q_k \cap \Xi)}^p
 \leq s^{d-dp/q} \sum_{k \in \IZ^d} \|Tu\|_{\L^q(Q_k \cap \Xi)}^p \\
 &\leq s^{d-dp/q} \sum_{k \in \IZ^d} \Big(\sum_{j \in \IZ^d} \|Tu_j\|_{\L^q(Q_k \cap \Xi)}\Big)^p \\
 &\leq s^{d-dp/q} \sum_{k \in \IZ^d} \Big(\sum_{j \in \IZ^d} g(\dist(Q_j \cap \Xi, Q_k \cap \Xi)) \|u_j\|_p\Big)^p.
\end{align*}
Let $|\, \cdot \,|_\infty$ be the $\ell^\infty$ norm on $\R^d$ and $\dist_\infty$ the corresponding distance. We have $\dist_\infty(Q_j,Q_k) =\max\{|sj-sk|_\infty-s,0\}$ and thus $\dist(Q_j \cap \Xi,Q_k \cap \Xi) \geq s \max\{|j-k|/\sqrt{d}-1,0\}$. Since $g$ is decreasing, we can infer
\begin{align*}
 \|T u\|_{\L^p(\Xi)}
 &\leq s^{d/p-d/q} \bigg(\sum_{k \in \IZ^d} \bigg(\sum_{j \in \IZ^d} g \bigg(s \max \bigg\{\frac{|j-k|}{\sqrt{d}}-1,0 \bigg\}\bigg) \|u_j\|_p\bigg)^p\bigg)^{1/p} \\
 &\leq s^{d/p-d/q} \bigg(\sum_{k \in \IZ^d} g \bigg(s \max \bigg\{\frac{|k|}{\sqrt{d}}-1,0 \bigg\}\bigg)\bigg) \bigg(\sum_{j \in \IZ^d} \|u_j\|_p^p \bigg)^{1/p},
\end{align*}
where the second step is an application of Young's inequality for (discrete) convolutions. The sum in $j$ equals $\|u\|_{\L^p(\Xi)}^p$ and the claim follows.
\end{proof}

\begin{remark}
\label{Rem: Hypercontractivity}
The proof of Proposition~\ref{Prop: Hypercontractivity} reveals that in each implication implicit constants in the conclusion depend at most on those appearing in the premise and potentially on $p$, $\psi$, ellipticity, and geometry.
\end{remark}

\begin{remark}
\label{Rem: Hypercontractivity2}
As for exponents $p \in (2,\infty]$, the results analogous to Proposition~\ref{Prop: Hypercontractivity} for the semigroup $\cS$ follows from duality using that $(\e^{-zL})^* = \e^{-\cl{z}L^*}$. For the gradient family $\cN$ there is, however, no such direct argument. A proof on $\Omega = \R^d$ that makes specific use of the invariance of $\Omega$ under linear transformations is presented in \cite[Prop.~3.9]{Riesz-Transforms}. We shall not need this in the following and thus do not attempt to adapt it.
\end{remark}

The following lemma deals with the first part of Theorem~\ref{Thm: Range for Lp realization}.

\begin{lemma}
\label{Lem: JA contains Sobolev conjugates}
Suppose $\Omega$ satisfies Assumption~\ref{N} and let $p \in (2_*, 2^*)$. Then $\{\e^{-tL}\}_{t>0}$ is $\L^p$ bounded with a bound depending only on $p$, ellipticity, dimensions, and geometry.
\end{lemma}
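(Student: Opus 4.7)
I would deduce the lemma from Proposition~\ref{Prop: Hypercontractivity} applied with $\psi = 0$, i.e.\ for the real-time families $\cS = \{\e^{-tL}\}_{t > 0}$ and $\cN = \{\sqrt{t}\,\nabla\e^{-tL}\}_{t > 0}$. Starting from the $\L^{2_*} \to \L^2$ bound $\|\e^{-tL}\|_{\L^{2_*} \to \L^2} \leq C t^{-1/2}$ for $t > 0$, implication~\eqref{iii Hypercontractivity} of Proposition~\ref{Prop: Hypercontractivity} delivers $\L^q \to \L^2$ off-diagonal estimates for every $q \in (2_*, 2)$; implication~\eqref{iv Hypercontractivity} transfers them to $\cN$; and implication~\eqref{v Hypercontractivity} concludes with the $\L^q$ boundedness of $\cS$, which is exactly the claim for $p \in (2_*, 2)$.

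The heart of the matter is therefore to show $\|\e^{-tL}\|_{\L^{2_*} \to \L^2} \leq C t^{-1/2}$ uniformly for $t > 0$. By duality, and using that $L^*$ belongs to the same class as $L$ with identical ellipticity constants and the same boundary conditions, this is equivalent to
\begin{align*}
 \|\e^{-tL}\|_{\L^2 \to \L^{2^*}} \leq C t^{-1/2}.
\end{align*}
Assumption~\ref{N} supplies the extension operators $E_k$ and hence the Sobolev embedding $\IW^{1,2}(\Omega) \hookrightarrow \L^{2^*}(\Omega)$. Together with the $\L^2$-contractivity of the semigroup and the gradient bound $\|\sqrt{t}\,\nabla\e^{-tL}\|_{\L^2 \to \L^2} \leq C$ from Proposition~\ref{Prop: OD estimates for semigroup on L2}, this yields the short-time bound $\|\e^{-tL}\|_{\L^2 \to \L^{2^*}} \lesssim 1 + t^{-1/2}$, which is $\lesssim t^{-1/2}$ on $(0,1]$. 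For $t \geq 1$ I would use the semigroup law to write $\e^{-tL} = \e^{-L}\e^{-(t-1)L}$ and invoke the exponential stability of Lemma~\ref{Lem: Expontial stability} in the form $\|\e^{-(t-1)L}\|_{\L^2 \to \L^2} \leq \e^{-\lambda(t-1)/2}$, giving $\|\e^{-tL}\|_{\L^2 \to \L^{2^*}} \lesssim \e^{-\lambda(t-1)/2} \leq C t^{-1/2}$. Patching the two ranges produces the desired sharp decay.

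The range $p \in (2, 2^*)$ is then obtained by a direct duality argument: $\|\e^{-tL}\|_{\L^p \to \L^p} = \|\e^{-tL^*}\|_{\L^{p'} \to \L^{p'}}$, and since $L^*$ is in the same class the bound already proved at $p' \in (2_*, 2)$ transfers. In dimension $d = 2$ the convention $2^* = \infty$ is formal, but $\IW^{1,2}(\Omega) \hookrightarrow \L^r(\Omega)$ still holds for every finite $r$ under Assumption~\ref{N}; running the entire argument with $r$ in place of $2^*$ and $r'$ in place of $2_*$, and letting $r \to \infty$, covers every $p \in (1, \infty)$.

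The hard part is the key estimate: one must combine the Sobolev-based short-time bound with the exponential stability so as to obtain the sharp $t^{-1/2}$ decay that is required for Proposition~\ref{Prop: Hypercontractivity}\eqref{iii Hypercontractivity} to apply. All constants track explicitly through Proposition~\ref{Prop: OD estimates for semigroup on L2}, the extension-operator constants from Assumption~\ref{N}, Lemma~\ref{Lem: Expontial stability}, and Remark~\ref{Rem: Hypercontractivity}, yielding the stated dependence on $p$, ellipticity, dimensions, and geometry.
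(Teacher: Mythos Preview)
Your proof is correct and reaches the same conclusion via the same overall mechanism --- establish an $\L^2 \to \L^{p_0}$ (or dually $\L^{p_0'} \to \L^2$) bound for the semigroup near the Sobolev endpoint and then feed it through the cycle \eqref{iii Hypercontractivity}--\eqref{v Hypercontractivity} of Proposition~\ref{Prop: Hypercontractivity}. The difference lies in how the key hypercontractive bound is obtained. The paper proves $\|\e^{-tL}u\|_p \lesssim t^{-\theta/2}\|u\|_2$ directly for every $p \in (2,2^*)$ and every $t>0$ by the Gagliardo--Nirenberg inequality $\|v\|_p \lesssim \|v\|_{\IW^{1,2}}^\theta \|v\|_2^{1-\theta}$ combined with the form estimate $\lambda\|\e^{-tL}u\|_{\IW^{1,2}}^2 \leq \Re\scal{L\e^{-tL}u}{\e^{-tL}u} \lesssim t^{-1}\|u\|_2^2$; this delivers the sharp power decay uniformly in $t$ without any case distinction and without invoking exponential stability. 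Your route instead targets the endpoint $2^*$ via the plain Sobolev embedding, which only yields $(1+t^{-1/2})$, and then patches in Lemma~\ref{Lem: Expontial stability} for $t \geq 1$ to recover the global $t^{-1/2}$. Both are legitimate; yours is arguably more elementary (no interpolation inequality) at the cost of relying on invertibility of $L$, while the paper's argument is a one-liner once Gagliardo--Nirenberg is quoted and would survive even if $0 \in \sigma(L)$. Two minor remarks: the detour through \eqref{iv Hypercontractivity} is unnecessary for your purposes since \eqref{v Hypercontractivity} for $\cS$ requires only off-diagonal estimates for $\cS$; and in $d=2$ ``letting $r \to \infty$'' is not a limit argument --- you simply fix $r$ large enough that $r' < p$ for the given target exponent $p$.
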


\begin{proof}
By duality we may restrict ourselves to $p \in (2, 2^*)$, compare with Remark~\ref{Rem: Hypercontractivity2}. By the same remark, we can use \eqref{iii Hypercontractivity} and \eqref{v Hypercontractivity} from Proposition~\ref{Prop: Hypercontractivity} for $p>2$. The upshot is that it suffices to check $\L^2 \to \L^p$ boundedness of the semigroup. By ellipticity and the Cauchy-Schwarz inequality we have for $u \in \L^2(\Omega)^m$ and $t>0$,
\begin{align*}
  \lambda \|\e^{-tL} u  \|_{\IW^{1,2}}^2 \leq \Re a(\e^{-tL} u, \e^{-tL} u) = \Re \scal{L \e^{-tL} u}{\e^{-tL} u} \leq \|L\e^{-tL} u\|_2 \|\e^{-tL} u\|_2.
\end{align*}
On the other hand, we obtain for $v \in \IW^{1,2}(\Omega)$ the interpolation inequality
\begin{align*}
 \|v\|_p \lesssim \|v\|_{\IW^{1,2}}^\theta \|v\|_2^{1-\theta},
\end{align*}
where $1/p = (1-\theta)/2 + \theta/2^*$, from the classical Gagliardo-Nirenberg inequality for functions on $\R^d$, see \cite[p.~125]{Nirenberg}, and the boundedness of the extension operators $E_k$, see Section~\ref{Subsec: Sobolev spaces}. We pick $v = \e^{-tL} u$ and obtain with the aid of the previous bound
\begin{align*}
  \|\e^{-tL} u  \|_p
 \lesssim  \|\e^{-tL} u  \|_{\IW^{1,2}}^\theta  \|\e^{-tL} u  \|_2^{1-\theta}
 \lesssim t^{-\theta/2} \|u\|_2,
\end{align*}
where we have also used the semigroup properties $\|\e^{-tL} u \|_2 \leq \|u\|_2$, $\|L \e^{-tL} u \|_2 \lesssim t^{-1} \|u\|_2$. Implicit constants depend on $p$, ellipticity, dimensions, and geometry. Substituting the value of $\theta$, this turns out just to be $\L^2 \to \L^p$ boundedness of $\{\e^{-tL}\}_{t>0}$.  
\end{proof}

We cite the following regularity result for the operator $\Lop: \IW^{1,2}(\Omega) \to \IW^{-1,2}(\Omega)$, whose maximal restriction to $\L^2(\Omega)^m$ is $L$, see Section~\ref{Subsec: L}. Essentially, this follows from {\v{S}}ne{\u\ii}berg's theorem \cite{Sneiberg-Original}, see also \cite[Thm.~1.3.25]{EigeneDiss}, but tracking the interpolation constants in order to deduce the required uniformity of the bounds is a non-trivial task.

\begin{proposition}[{\cite[Thm.~6.2]{HJKR}}]
\label{Prop: Domain is W1p}
Under Assumptions~\ref{D} and \ref{N} there exists $\eps'>0$ such that $\Lop$ extends/restricts to an isomorphism $\IW^{1,p}(\Omega) \to \IW^{-1,p}(\Omega)$ for all $p \in (2-\eps',2+\eps')$. In addition, $\eps'$ and upper and lower bounds for $\Lop$ can be given in terms of ellipticity, dimensions, and geometry. 
\end{proposition}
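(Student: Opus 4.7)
The plan is to combine two ingredients: the isomorphism property at $p=2$ coming straight from Lax--Milgram, and a quantitative perturbation theorem from interpolation theory (\v{S}ne{\u\i}berg's theorem) that propagates the isomorphism along a complex interpolation scale. Throughout, one works with the two scales $\{\IW^{1,p}(\Omega)\}_{p}$ and $\{\IW^{-1,p}(\Omega)\}_{p}$ indexed by $p$ close to $2$, viewed via the standard identification $1/p = (1-\theta)/p_0 + \theta/p_1$ as complex interpolation families.

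First, I would establish that $\Lop: \IW^{1,p}(\Omega) \to \IW^{-1,p}(\Omega)$ is a \emph{bounded} operator for every $p\in(1,\infty)$, with norm controlled by $\Lambda(d+1)$ only. This is the direct dual formulation of the sesquilinear estimate $|a(u,v)| \leq \Lambda(d+1)(\|u\|_p + \|\nabla u\|_p)(\|v\|_{p'} + \|\nabla v\|_{p'})$, which follows from H\"older's inequality since the coefficients are essentially bounded. At $p=2$, ellipticity (Assumption~\ref{Ass: Ellipticity}) together with Lax--Milgram gives the two-sided bound $\|\Lop u\|_{\IW^{-1,2}} \simeq \|u\|_{\IW^{1,2}}$ with constants depending only on $\lambda$, $\Lambda$, and $d$.

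Second, I would verify that the two scales form genuine complex interpolation scales near $p=2$, i.e.\ $[\IW^{1,p_0}(\Omega),\IW^{1,p_1}(\Omega)]_\theta = \IW^{1,p}(\Omega)$ and the analogous statement for the negative-order scale. This is precisely the place where Assumptions~\ref{D} and \ref{N} enter: under these hypotheses one has bounded Sobolev extension operators $E_k$ adapted to the Dirichlet parts $D_k$ (as recalled in Section~\ref{Subsec: Sobolev spaces}) and, together with a bounded restriction, they form a retraction/coretraction pair onto the universal scale $\{\W^{1,p}_{D_k}(\R^d)\}_p$. Since complex interpolation commutes with retracts, the identification reduces to the corresponding statement on $\R^d$, which is classical. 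Duality then gives the interpolation identity for the negative-order spaces.

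Third, I would invoke \v{S}ne{\u\i}berg's stability theorem: a bounded operator between two complex interpolation scales that is invertible at one value of the parameter is invertible in a neighborhood, and the size of the neighborhood together with the norm of the inverse depend explicitly on the upper bound on the scale, the lower bound at the reference point, and the interpolation constants of the two scales. Plugging in the bounds collected in the first step yields the claimed $\eps'$ and the uniform control of upper and lower bounds in terms of $\lambda$, $\Lambda$, $d$, and geometry. The main obstacle is the quantitative bookkeeping: the interpolation constants for $\IW^{1,p}(\Omega)$ carry a non-trivial dependence on the geometry of $\Omega$ and on $\bd\Omega$ through the extension operators $E_k$, and tracking them through \v{S}ne{\u\i}berg's argument (which is ultimately a Neumann-series estimate in a suitable analytic family) is precisely what prevents a purely abstract proof and forces the hands-on treatment carried out in \cite{HJKR}.
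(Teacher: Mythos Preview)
The paper does not supply its own proof of this proposition; it is quoted from \cite[Thm.~6.2]{HJKR}, and the sentence preceding it explicitly says that the result ``essentially follows from \v{S}ne{\u\ii}berg's theorem~\cite{Sneiberg-Original} \ldots\ but tracking the interpolation constants in order to deduce the required uniformity of the bounds is a non-trivial task.'' Your proposal follows precisely this route---boundedness of $\Lop$ on the full $p$-scale via H\"older, invertibility at $p=2$ via Lax--Milgram, complex interpolation of the scales $\IW^{\pm 1,p}(\Omega)$, and the quantitative \v{S}ne{\u\ii}berg perturbation---and you correctly flag the constant-tracking through the extension operators as the genuine work, which is why the paper defers to \cite{HJKR} rather than reproving it.

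One small point: your reduction in the second step is slightly optimistic. The retraction via the $E_k$ lands you in $\W^{1,p}_{D_k}(\R^d)$, not in the unconstrained $\W^{1,p}(\R^d)$, so the interpolation identity you still need on $\R^d$ is for Sobolev spaces with a partial vanishing-trace condition on the closed set $D_k$. That is not quite ``classical''; it is exactly where Assumption~\ref{D} (Ahlfors regularity of $D_k$) is used, and it is part of what \cite{HJKR} establishes. This does not affect the overall strategy, which is the intended one.
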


Now, we are ready to give the proof of Theorem~\ref{Thm: Range for Lp realization}. Let us stress that our argument essentially differs from the whole space case \cite[Sec.~4.2]{Riesz-Transforms} in that it avoids a change of variables for the coefficients $A$. This is necessary since the resulting change of the underlying domain would affect geometric constants in an uncontrollable way.

\begin{proof}[Proof of Theorem~\ref{Thm: Range for Lp realization}]
In view of Lemma~\ref{Lem: JA contains Sobolev conjugates} we only need to prove the extrapolation from the range $(2_*, 2^*)$ in the case $d \geq 3$ under Assumptions~\ref{D} and \ref{N}.

Let $\eps'>0$ be as provided by Proposition~\ref{Prop: Domain is W1p}. We fix $p$, $q$, and $r$ such that
\begin{align*}
 \max\{2-\eps', 2_*, 1^* \} < p < q < r < 2,
\end{align*}
which is possible since $d \geq 3$ implies $1^* < 2$. We will prove $r_* \in \cJ(L)$ with a bound depending on $p$, $q$, $r$, ellipticity, and geometry. This implies the claim: First, $p,q,r$ share the same dependencies as $\eps'$ and therefore we have $r_* = 2_* - \eps$ for some $\eps>0$ depending on ellipticity, dimensions, and geometry. Second, Riesz-Thorin interpolation of the $\L^{r_*}$ bound for the semigroup with the contractivity on $\L^2$ yields $\L^s$ bounds for $s \in (r_*,2)$ without introducing further implicit constants. Third, the same argument with the same choice of parameters applies to $L^*$ and by duality we obtain $\L^s$ boundedness for $s \in (2, (r')^*)$, where $1/r' = 1 - 1/r$.

In order to prove $\L^{r_*}$ boundedness, we let $t>0$ and take $u$ in $\L^{p^*}$, a dense subspace of $\L^2 \cap \L^{r_*}$. By Cauchy's integral formula and since $\Lop$ extends $L$, we can write
\begin{align*}
 \e^{-tL} u = L \e^{-tL} L^{-1} u = - \frac{1}{2 \pi \i} \oint_{|z-t| = R} \frac{1}{(z-t)^2} \e^{-zL}\Lop^{-1}u \; \d z,
\end{align*}
where $R = \dist(t, \partial \S^+_\psi)/2$ and $\psi = \pi/4 - \omega/2$. We have $p^* \in (2,2^*)$, so $p^* \in \cJ(L)$ thanks to Lemma~\ref{Lem: JA contains Sobolev conjugates}. Proposition~\ref{Prop: Hypercontractivity} implies $\L^{q^*}$ boundedness of the semigroup for complex times $z \in \S^+_\psi$ and in particular along the integration contour above. Thus, we have 
\begin{align*}
  \|\e^{-tL}u  \|_{q^*} \lesssim t^{-1} \|\Lop^{-1} u\|_{q^*}.
\end{align*}
Now, $\Lop$ extends to an isomorphism $\IW^{1,q}(\Omega) \to \IW^{-1,q}(\Omega)$ by choice of $q$. Since $1^* < q < 2$, we obtain from Sobolev embeddings,
\begin{align*}
 \|\Lop^{-1} u\|_{q^*} \lesssim  \|\Lop^{-1} u\|_{\IW^{1,q}} \lesssim \|u\|_{\IW^{-1,q}} \lesssim \|u\|_{q_*}.
\end{align*}
Altogether, $\|\e^{-tL}u  \|_{q^*} \lesssim t^{-1} \|u\|_{q_*}$, that is, the semigroup is $\L^{q_*} \to \L^{q^*}$ bounded. Riesz-Thorin interpolation with the $\L^2$ off-diagonal estimates from Proposition~\ref{Prop: OD estimates for semigroup on L2} leads to $\L^{r_*} \to \L^s$ off-diagonal estimates for some $s>r_*$ determined by $q$ and $r$, which in turn implies $\L^{r_*}$ boundedness (Lemma~\ref{Lem: OD implies bounded}). 
\end{proof}
\section{Proof of Theorem~\ref{Thm: Hinfty in Lp}}
\label{Sec: Hinfty}

To obtain $\L^p$ estimates for the functional calculus for $L$ it will be convenient to calculate $f(L)$ in terms of the semigroup instead of the resolvent. This can be seen as some kind of Laplace transform inversion.

\begin{lemma}
\label{Lem: Functional calculus via Laplace transform}
Let $\omega < \theta < \nu < \psi < \pi/2$ and $g \in \H_0^\infty(\S^+_\psi)$. Put $\Gamma_{\pm} = (0,\infty)\e^{\pm \i (\pi/2 - \theta)}$ and $\gamma_{\pm} = (0,\infty)\e^{\pm \i \nu}$. Then $g(L)$ can also be computed as an $\L^2(\Omega)^m$-valued Bochner integral
\begin{align*}
 g(L) = \int_{\Gamma_+} \e^{-zL} \eta_+(z) \; \d z - \int_{\Gamma_{-}} \e^{-zL} \eta_{-}(z) \; \d z,
\end{align*}
where
\begin{align*}
 \eta_{\pm}(z) = \frac{1}{2 \pi \i} \int_{\gamma_{\pm}} \e^{z \xi} g(\xi) \; \d \xi \qquad (z \in \Gamma_\pm).
\end{align*}
\end{lemma}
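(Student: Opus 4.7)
The plan is to start from the right-hand side, apply Fubini to swap the order of integration, evaluate the resulting inner $z$-integral as the resolvent $(L-\xi)^{-1}$, and then recognise what remains as the Dunford integral \eqref{Eq: Def H0infty calculus} that defines $g(L)$.

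I would begin by justifying Fubini via absolute integrability on the product contour $\Gamma_\pm \times \gamma_\pm$. Three ingredients enter: $g \in \H_0^\infty(\S^+_\psi)$ provides the two-sided decay $|g(\xi)| \lesssim \min\{|\xi|^s, |\xi|^{-s}\}$; the semigroup is uniformly bounded on $\L^2(\Omega)^m$ along $\Gamma_\pm$ because $|\arg z| = \pi/2 - \theta < \pi/2 - \omega$; and $|\e^{z\xi}| = \e^{-c|z||\xi|}$ for some $c>0$, forced by the elementary angle computation $\arg(z\xi) = \pm(\pi/2 - \theta + \nu) \in (\pi/2, \pi)$, where $\nu > \theta$ is used crucially. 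Tonelli then reduces matters to the scalar double integral $\int_0^\infty \int_0^\infty \e^{-cs\rho} \min\{\rho^s, \rho^{-s}\} \, \d\rho \, \d s$, which is immediately seen to converge.

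After the interchange, the claim reduces to the operator identity
\begin{align*}
\int_{\Gamma_\pm} \e^{z\xi} \e^{-zL} \, \d z = (L - \xi)^{-1} \qquad (\xi \in \gamma_\pm),
\end{align*}
which I would prove by testing against $u \in \dom(L)$. Using the intertwining $L \e^{-zL} u = -\partial_z \e^{-zL} u$ gives $\partial_z(\e^{z\xi}\e^{-zL} u) = -\e^{z\xi}(L-\xi)\e^{-zL} u$, so the fundamental theorem of calculus along the ray $\Gamma_\pm$ yields $(L-\xi) \int_{\Gamma_\pm} \e^{z\xi}\e^{-zL} u \, \d z = -[\e^{z\xi}\e^{-zL} u]_0^\infty = u$, the boundary term at infinity being killed by the exponential decay of $|\e^{z\xi}|$ against the bounded $\e^{-zL} u$. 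Closedness of $L$, density of $\dom(L)$ in $\L^2(\Omega)^m$, and the analogous computation for the product in the reverse order upgrade this to equality of bounded operators.

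Plugging this back in, the right-hand side of the lemma rewrites as
\begin{align*}
-\frac{1}{2\pi\i}\int_{\gamma_+} g(\xi)(\xi-L)^{-1}\,\d\xi + \frac{1}{2\pi\i}\int_{\gamma_-} g(\xi)(\xi-L)^{-1}\,\d\xi,
\end{align*}
which matches the defining integral \eqref{Eq: Def H0infty calculus} for $g(L)$ once the convention that $\partial \S^+_\nu$ surrounds $\sigma(L) \subseteq \cl{\S^+_\omega}$ counterclockwise in the extended complex plane is spelled out. The only delicate points are keeping the angle inequalities sharp enough to force $\Re(z\xi)$ strictly negative on the entire product contour and tracking the minus signs produced by $(L-\xi)^{-1} = -(\xi-L)^{-1}$ against the orientation of $\partial \S^+_\nu$; the remainder of the argument is essentially bookkeeping.
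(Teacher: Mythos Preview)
Your proposal is correct and follows essentially the same route as the paper: compute $\int_{\Gamma_\pm} \e^{z\xi}\e^{-zL}\,\d z = -(\xi-L)^{-1}$ via the fundamental theorem of calculus using the angle condition $|\arg(z\xi)| = \pi/2 - \theta + \nu > \pi/2$, then invoke Fubini to recover the Dunford integral for $g(L)$. The only cosmetic difference is that the paper differentiates $(\xi-L)^{-1}\e^{z\xi}\e^{-zL}$ directly (the resolvent absorbing the domain issue), whereas you test against $u\in\dom(L)$ and extend by density; both are standard and equivalent.
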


\begin{proof}
Let $\xi \in \gamma_{\pm}$. For $z \in \Gamma_\pm$ we have $|\arg(z \xi)| = \frac{\pi}{2} - \theta + \nu > \frac{\pi}{2}$. Consequently, $(\xi - L)^{-1} \e^{z \xi} \e^{-zL}$ vanishes as $|z| \to \infty$ along the ray $\Gamma_\pm$ and we may compute, using the fundamental theorem of calculus,
\begin{align*}
 \int_{\Gamma_\pm} \e^{z \xi} \e^{-zL} \; \d z = \int_{\Gamma_\pm} \frac{\d}{\d z}\bigg((\xi - L)^{-1} \e^{z \xi} \e^{-zL}\bigg) \; \d z = -(\xi - L)^{-1}.
\end{align*}
By definition of the functional calculus
\begin{align*}
 g(L) = - \frac{1}{2 \pi \i} \int_{\gamma_+} g(\xi) (\xi - L)^{-1} \; \d \xi + \frac{1}{2 \pi \i} \int_{\gamma_{-}} g(\xi) (\xi - L)^{-1} \; \d \xi.
\end{align*}
From these two identities the claim follows by an application of Fubini's theorem.
\end{proof}

Next, we recall an important weak type $(p,p)$ criterion for bounded operators on $\L^2(\Xi)$ that goes back to \cite{BK}. If $\Xi = \R^d$, Proposition~\ref{Prop: BK Theorem} below is exactly the simplified version presented in \cite[Theorem~1.1]{Riesz-Transforms}, see also the subsequent Remark (7) in \cite{Riesz-Transforms} concerning vector-valued extensions. The result below on general measurable sets $\Xi$ is not mentioned therein but follows easily:  Take $R$ as the canonical restriction $\R^d \to \Xi$ and $E$ as the extension $\Xi \to \R^d$ by zero. Then observe that the $\R^d$-version applies to $T' := ETR$ and $A_r' := EA_r R$ with the same parameters and that $T$ and $T'$ have the same $\L^p$ bound.

\begin{proposition}
\label{Prop: BK Theorem}
Let $q \in [1,2)$. Let $T: \L^2(\Xi)^{m_1} \to \L^2(\Xi)^{m_2}$ be a bounded linear operator, where $m_1, m_2 \in \IN$ and $\Xi \subseteq \R^d$ is measurable. Assume there exists a family $\{A_r\}_{r>0}$ of bounded linear operators on $\L^2(\Xi)^{m_1}$ with the following properties: For $j \geq 2$,
\begin{align}
\label{Eq: BK Condition 1}
\bigg(\int_{C_j(B) \cap \Xi} |T(1-A_{r})u|^2 \bigg)^{1/2} \leq g(j) r^{d/2 - d/q} \bigg(\int_{B \cap \Xi} |u|^{q} \bigg)^{1/q}
\end{align}
and for $j \geq 1$,
\begin{align}
\label{Eq: BK Condition 2}
\bigg(\int_{C_j(B) \cap \Xi} |A_{r}u|^2 \bigg)^{1/2} \leq g(j) r^{d/2 - d/q} \bigg( \int_{B \cap \Xi} |u|^{q} \bigg)^{1/q},
\end{align}
whenever $B \subseteq \R^d$ is an open ball with radius $r$ and $u \in \L^2(\Xi)^{m_1}$ has support in $B \cap \Xi$. If $\Sigma := \sum g(j) 2^{dj/2}$ is finite, then $T$ is of weak type $(q,q)$ and hence $\L^p$ bounded for $p \in (q,2)$ with a bound depending on $q, m_1, m_2, \Sigma$, and an $\L^2$ bound for $T$.
\end{proposition}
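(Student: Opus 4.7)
The plan is to reduce to the $\R^d$-version of the criterion, which is exactly \cite[Theorem~1.1]{Riesz-Transforms}; this path is indicated already in the paragraph preceding the statement. I would introduce the canonical restriction $R : \L^q(\R^d)^k \to \L^q(\Xi)^k$, $R u := u|_\Xi$, and the extension by zero $E : \L^q(\Xi)^k \to \L^q(\R^d)^k$. Both are contractions for every $q \in [1, \infty]$ and satisfy $R E = \Id$ on $\L^q(\Xi)^k$. Setting $T' := E T R$ and $A_r' := E A_r R$ yields a bounded operator $T' : \L^2(\R^d)^{m_1} \to \L^2(\R^d)^{m_2}$ whose norm is at most $\|T\|_{\L^2 \to \L^2}$.

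The first step is to verify conditions \eqref{Eq: BK Condition 1} and \eqref{Eq: BK Condition 2} for $T'$ and $A_r'$ on $\R^d$ with the same function $g$. Given an open ball $B \subseteq \R^d$ of radius $r$ and $u \in \L^2(\R^d)^{m_1}$ supported in $B$, the function $R u$ is supported in $B \cap \Xi$. The identity $T'(1 - A_r') = E\, T(1 - A_r)\, R$, which follows from $RE = \Id$, together with the fact that $E$ extends by zero, reduces the $\R^d$-localisation integral on $C_j(B)$ to the assumed $\Xi$-localisation integral on $C_j(B) \cap \Xi$; the trivial estimate $\|R u\|_{\L^q(B \cap \Xi)} \leq \|u\|_{\L^q(B)}$ then transports the hypothesis verbatim. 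The verification for $A_r'$ is analogous.

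The second step is to invoke \cite[Theorem~1.1]{Riesz-Transforms} applied to $T'$ and $A_r'$: the finiteness of $\Sigma$ yields that $T'$ is of weak type $(q,q)$ on $\R^d$, and Marcinkiewicz interpolation with its $\L^2$-boundedness gives $\L^p$-boundedness for every $p \in (q,2)$, with a constant depending only on $q$, $m_1$, $m_2$, $\Sigma$, and $\|T\|_{\L^2 \to \L^2}$. The final step transfers the bounds to $T$ on $\Xi$: for $v \in \L^p(\Xi)^{m_1} \cap \L^2(\Xi)^{m_1}$ one has $E v \in \L^p(\R^d)^{m_1}$ with the same norm, and $\|T v\|_{\L^p(\Xi)} = \|E T v\|_{\L^p(\R^d)} = \|T' E v\|_{\L^p(\R^d)}$ because $R E v = v$; an identical argument via distribution functions yields the weak-type bound for $T$.

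I do not anticipate a genuine obstacle here: the whole argument is essentially bookkeeping around the isometric embedding $E : \L^p(\Xi) \to \L^p(\R^d)$. The only point requiring care is to check, via $RE = \Id$, that $T'(1 - A_r')$ and $A_r'$ satisfy the same off-diagonal estimates as their $\Xi$-counterparts with unchanged constants; once this is in place, the deep content of the proposition is entirely furnished by the $\R^d$-version cited above.
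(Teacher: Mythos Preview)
Your proposal is correct and follows exactly the approach the paper itself indicates in the paragraph immediately preceding the statement: reduce to the $\R^d$-version \cite[Theorem~1.1]{Riesz-Transforms} by conjugating with the restriction $R$ and the zero-extension $E$, verify that $T' = ETR$ and $A_r' = EA_rR$ inherit the off-diagonal bounds with the same $g$ via $RE = \Id$, and then transfer the resulting $\L^p$ and weak-type bounds back to $T$. You have simply written out in full the bookkeeping that the paper compresses into two sentences.
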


As a first application we prove

\begin{lemma}
\label{Lem: OD implies Hinfty}
Suppose $\{\e^{-tL}\}_{t>0}$ satisfies $\L^{q} \to \L^2$ off-diagonal estimates for some $q \in (1,2)$. Then
\begin{align*}
 \|f(L)u\|_p \leq C \|f\|_\infty \|u\|_p \qquad (f \in \H_0^\infty(\S^+_\psi),\, u \in \L^2(\Omega)^m ),
\end{align*}
whenever $\psi \in (\omega, \pi)$ and $p \in (q,2)$. Here, $C$ depends on $\psi, p, q$, ellipticity, dimensions, geometry and constants implicit in the assumption.
\end{lemma}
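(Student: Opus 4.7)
The plan is to apply the Blunck--Kunstmann weak-type criterion of Proposition~\ref{Prop: BK Theorem} to $T := f(L)$. The $\L^2$ boundedness of $T$ with norm at most $4\|f\|_\infty$ comes for free from Proposition~\ref{Prop: Hinfty on L2}. By homogeneity in $f$ we may normalise $\|f\|_\infty = 1$. For the approximating family I will choose $A_r := I - (I - \e^{-r^2 L})^n$ with an integer $n$ depending only on $d$ and $q$, to be fixed at the end.

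Condition~\eqref{Eq: BK Condition 2} should be immediate: the binomial expansion $A_r = \sum_{k=1}^n \binom{n}{k}(-1)^{k+1}\e^{-kr^2 L}$ reduces it to the hypothesised $\L^q \to \L^2$ off-diagonal estimates for $\e^{-kr^2 L}$, which occur at comparable scales as $1 \leq k \leq n$, producing a sequence of Gaussian-type decay that is summable with the weight $2^{dj/2}$.

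For Condition~\eqref{Eq: BK Condition 1} I first reduce to $\psi \in (\omega, \pi/2)$, which is legitimate since $\H_0^\infty(\S_\psi^+) \hookrightarrow \H_0^\infty(\S_{\psi'}^+)$ for any $\omega < \psi' < \psi$ and the functional calculus is consistent. Lemma~\ref{Lem: Functional calculus via Laplace transform} then provides the representation $f(L) = \int_{\Gamma_+}\e^{-zL}\eta_+(z)\,\d z - \int_{\Gamma_-}\e^{-zL}\eta_-(z)\,\d z$, and a direct contour estimate on the definition of $\eta_\pm$ (using $\|f\|_\infty \leq 1$ and $\Re(z\xi) \leq -\sin(\nu - \theta)|z|r$ along the rays) yields $|\eta_\pm(z)| \leq C_{\theta,\nu}/|z|$. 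Iterating $I - \e^{-r^2 L} = \int_0^{r^2} L\e^{-sL}\,\d s$ (rigorous on $\dom(L^n)$ and extended by density) produces the key representation
\[ \e^{-zL}(I - \e^{-r^2 L})^n u = \int_{[0,r^2]^n} L^n \e^{-(z + s_1 + \cdots + s_n)L} u\; \d s_1 \cdots \d s_n. \]
To bound $L^n \e^{-wL}$ as an $\L^q(B) \to \L^2(C_j(B))$ operator for $w$ in the complex sector containing $\Gamma_\pm$, I will first upgrade the hypothesised $\L^q \to \L^2$ off-diagonal estimates from real to complex times by the composition argument used in the proof of Proposition~\ref{Prop: Hypercontractivity}\eqref{iv Hypercontractivity}, and then factor $L^n \e^{-wL} = \e^{-wL/2}(L\e^{-wL/(2n)})^n$ to concatenate one $\L^q \to \L^2$ off-diagonal estimate with $n$ $\L^2$ off-diagonal estimates from Proposition~\ref{Prop: OD estimates for semigroup on L2}. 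This gives $\|L^n \e^{-wL}u\|_{\L^2(C_j(B))} \lesssim |w|^{-n + d/(2q) - d/4}\,\e^{-c\,4^j r^2/|w|}\,\|u\|_{\L^q(B)}$. Inserting this back, integrating in $s \in [0,r^2]^n$ and against $|\eta_\pm(z)| \leq C/|z|$ on $\Gamma_\pm$, and splitting the combined integral at the threshold $|w| \sim 4^j r^2$ gives the required bound with $g_1(j)$ decaying faster than $2^{-dj/2}$ once $n$ is large enough: on $\{|w| \leq 4^j r^2\}$ the Gaussian supplies $\e^{-c\,4^j}$, while on $\{|w| > 4^j r^2\}$ the polynomial factor supplies $(r^2/|w|)^n$.

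The hard part will be the careful bookkeeping of the interplay between the Gaussian off-diagonal decay of the semigroup, the cancellation furnished by $(I - \e^{-r^2 L})^n$, and the $1/|z|$ weight along $\Gamma_\pm$, while keeping all constants uniform and linear in $\|f\|_\infty$; a secondary point is that the upgrade of the $\L^q \to \L^2$ off-diagonal bounds from the real line to the complex sector relies on the reduction $\psi < \pi/2$. Once both BK conditions are in place, Proposition~\ref{Prop: BK Theorem} will deliver the claimed $\L^p$ bound, with a constant inheriting the stated dependence on $\psi$, $p$, $q$, ellipticity, dimensions, geometry, and the implicit constants in the hypothesis.
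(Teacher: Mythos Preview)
Your overall strategy is exactly the paper's: apply Proposition~\ref{Prop: BK Theorem} to $T=f(L)$ with the same approximating family $A_r = 1-(1-\e^{-r^2 L})^n$, and your treatment of \eqref{Eq: BK Condition 2} is identical. The divergence is in \eqref{Eq: BK Condition 1}. The paper applies Lemma~\ref{Lem: Functional calculus via Laplace transform} to the \emph{whole} symbol $g(z)=f(z)(1-\e^{-r^2 z})^n$, so that the semigroup appearing in the contour integral is $\e^{-zL}$ and the off-diagonal Gaussian reads $\e^{-c\,4^{j}r^2/|z|}$; this factor kills the $1/|z|$ singularity of $\eta_\pm$ at $z\to 0$, and the extra decay $|\eta_\pm(z)|\lesssim r^{2n}|z|^{-n-1}$ for large $|z|$ (coming from $|1-\e^{-r^2\xi}|^n\le r^{2n}|\xi|^n$) handles the tail.

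Your route---applying Lemma~\ref{Lem: Functional calculus via Laplace transform} to $f$ alone and representing $(1-\e^{-r^2L})^n$ via $\int_{[0,r^2]^n} L^n \e^{-(z+\sigma)L}\,\d s$---has a gap precisely at $z\to 0$. With only $|\eta_\pm(z)|\le C/|z|$ (which is all that $\|f\|_\infty\le 1$ gives), the off-diagonal Gaussian is now $\e^{-c\,4^{j}r^2/|w|}$ with $w=z+\sigma$; for $\sigma$ bounded away from $0$ (a region of positive $s$-measure $\simeq r^{2n}$) this factor stays bounded as $|z|\to 0$, so the $\d|z|/|z|$ integral diverges logarithmically. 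Concretely, restricting to $|z|\in(0,r^2)$ and $\sigma\in(nr^2/2,nr^2)$ your bound reads $\int_0^{r^2}\frac{\d t}{t}\cdot r^{2n}\cdot(nr^2)^{-n-\gamma/2}\e^{-c4^{j}/n}=\infty$. You could repair the integrability by invoking the $\H_0^\infty$ decay of $f$ to obtain $|\eta_\pm(z)|\lesssim |z|^{s-1}$ near $0$, but then your constant depends on the decay exponent $s$ of $f$, not only on $\|f\|_\infty$; this is fatal for the passage to $f\in\H^\infty(\S_\psi^+)$ in the proof of Theorem~\ref{Thm: Hinfty in Lp}, where the approximants $f_n=e^{1/n}f$ have $s_n=1/n\to 0$. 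The cleanest fix is to do what the paper does: fold $(1-\e^{-r^2 z})^n$ into the symbol before taking the Laplace transform, so that the cancellation shows up in $\eta_\pm$ and the Gaussian in $|z|$ is retained.
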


\begin{proof}
Without loss of generality we may assume $\psi < \pi/2$. Let $f \in \H_0^\infty(\S^+_\psi)$ be normalized such that $\|f\|_\infty = 1$. We appeal to Proposition~\ref{Prop: BK Theorem} with $T = f(L)$. We put $A_r = 1 - (1- \e^{-r^2 L})^n$, where $n \in \IN$ has to be determined yet. Proposition~\ref{Prop: Hinfty on L2} yields $\|T\|_{\L^2 \to \L^2} \leq 4$ and we need to check \eqref{Eq: BK Condition 1} and \eqref{Eq: BK Condition 2}. 

For the argument we put $\gamma := d/q - d/2 > 0$, let $B \subseteq \R^d$ be an open ball with radius $r>0$, and $u \in \L^2(\Omega)^m$ have its support in $B \cap \Omega$. Having expanded
\begin{align*}
A_r = \sum_{k=1}^n \binom{n}{k} (-1)^{k-1} \e^{-kr^2 L}, 
\end{align*}
the assumed $\L^{q} \to \L^2$ off-diagonal estimates (with constants $C,c \in (0,\infty)$) yield for $j \geq 1$,
\begin{align*}
\bigg(\int_{C_j(B) \cap \Omega} |A_r u|^2 \bigg)^{1/2} \leq
C 2^n  \e^{-c(2^j -2)^2/n^2} r^{-\gamma} \bigg(\int_{B \cap \Omega} |u|^{q} \bigg)^{1/q}.
\end{align*}
Hence, \eqref{Eq: BK Condition 2} holds with $g(j)2^{dj/2}$ summable no matter the value of $n$. 

Turning to \eqref{Eq: BK Condition 1}, we apply Lemma~\ref{Lem: Functional calculus via Laplace transform} to the function $g(z) = f(z)(1-\e^{-r^2 z})^n$ and write
\begin{align}
\label{Eq1: OD implies Hinfty}
 T(1-A_r)u =  \int_{\Gamma_+} \eta_+(z) \e^{-zL}u  \; \d z - \int_{\Gamma_{-}} \eta_{-}(z) \e^{-zL}u  \; \d z,
\end{align}
where
\begin{align*}
 \eta_{\pm}(z) = \frac{1}{2 \pi \i} \int_{\gamma_{\pm}} \e^{z \xi} g(\xi) \; \d \xi \qquad (z \in \Gamma_\pm).
\end{align*}
For $j \geq 2$ we take $\L^2(C_j(B) \cap \Omega)$-norms in \eqref{Eq1: OD implies Hinfty} and apply off-diagonal estimates to give
\begin{align}
\label{Eq2: OD implies Hinfty}
\bigg(\int_{C_j(B) \cap \Omega} |T(1-A_r)u|^2 \bigg)^{1/2}
&\lesssim (I_{j,+} + I_{j,-}) \bigg(\int_{B \cap \Omega} |u|^{q} \bigg)^{1/q},
\end{align}
where
\begin{align*}
 I_{j,\pm} = \int_{\Gamma_\pm} C |\eta_\pm(z)|  |z|^{-\gamma/2} \e^{-c 4^{j-1} r^2/|z|} \; \d |z|.
\end{align*}
By the mean value theorem and the normalization of $f$ we have $|g(\xi)| \leq \min\{2^n, r^{2n}|\xi|^n \}$ for $\xi \in \gamma_\pm$. Consequently, 
\begin{align*}
 |\eta_\pm(z)| \leq \alpha |z|^{-1} \min\{1, r^{2n} |z|^{-n} \} \qquad (z \in \Gamma_\pm),
\end{align*}
where $\alpha$ depends on $\psi$, $\omega$, $n$. Setting $|z| = t$, we deduce that
\begin{align*}
 I_{j,\pm}
 &\leq \alpha \e^{-c 4^{j-1}/2} \int_0^{r^2} t^{-\gamma/2}  \e^{-c 4^{j-1} r^2 /(2t)} \; \frac{\d t}{t} \\
 &\quad + \alpha r^{2n} \int_{r^2}^\infty t^{-\gamma/2-n}  \e^{-c 4^{j-1} r^2 /t} \; \frac{\d t}{t} \\
 &\leq\alpha r^{-\gamma} 2^{-\gamma(j-1)} \bigg( \e^{-c 4^{j-1}/2}  \int_0^\infty s^{-\gamma/2}  \e^{-c/(2s)} \; \frac{\d s}{s} \\
 &\qquad \qquad \qquad \qquad + 4^{-(j-1)n} \int_0^\infty s^{-\gamma/2-n} \e^{-c/s} \; \frac{\d s}{s} \bigg).
\end{align*}
The remaining integrals in $s$ are finite. Thus, we have found $I_{j,\pm} \leq g(j) r^{-\gamma}$ with $\{2^{dj/2} g(j)\}_{j \geq 2}$ summable provided $\gamma+ 2n > d/2$. For such choice of $n$, \eqref{Eq: BK Condition 1} follows from \eqref{Eq2: OD implies Hinfty}.
\end{proof}

Now we can complete the

\begin{proof}[Proof of Theorem~\ref{Thm: Hinfty in Lp}]
The necessity part follows simply because we can take $f(z) = \e^{-tz}$ in \eqref{Eq: Hinfty in Lp} for every $t>0$.

By duality it suffices to treat the sufficiency part in the case $p_0 \in \cJ(L) \cap [1,2)$. Let $p \in (p_0,2]$ and $\psi \in (\omega, \pi)$. Proposition~\ref{Prop: Hypercontractivity} provides $\L^q \to \L^2$ off-diagonal estimates for the semigroup, for instance for the choice $q=(p+p_0)/2$, and implied constants depend on $\Jnorm{p_0}$, $p$, and geometry. See Remark~\ref{Rem: Hypercontractivity} for the latter. Lemma~\ref{Lem: OD implies Hinfty} yields
\begin{align*}
 \|f(L)u\|_p \leq C \|f\|_\infty \|u\|_p \qquad (f \in \H_0^\infty(\S^+_\psi), u \in \L^2(\Omega)^m),
\end{align*}
with $C$ depending on  $\Jnorm{p_0}$, $p$, $\psi$, and geometry. This bound extends to $f \in \H^\infty(\S^+_\psi)$ and $u \in \L^2(\Omega)^m$, see Section~\ref{Subsec: Functional calculus}. Indeed, if $f_n \in \H_0^\infty(\S^+_\psi)$ are such that $f_n \to f$ pointwise, $\|f_n\|_\infty \to \|f\|_\infty$, and $f_n(L)u \to f(L)u$ in $\L^2$, then $f_n(L)u \to f(L)u$ also in $\L^p$ since $\Omega$ is bounded.
\end{proof}
\section{\texorpdfstring{$\L^p$}{Lp} bounds for the Riesz transform}
\label{Sec: Riesz}

As a primer to Theorem~\ref{Thm: Square root solution Lp} we study $\L^p$ boundedness of the Riesz transform $\nabla L^{-1/2}$. Due to Theorem~\ref{Thm: Kato} this is an $\L^2$ bounded operator. It follows from \eqref{Eq: Resolution of the identity} that
\begin{align}
\label{Eq: Riesz transform representation}
\nabla L^{-1/2} u = \frac{2}{\sqrt{\pi}} \int_0^\infty \nabla \e^{-t^2 L}u \; \d t \qquad (u \in \L^2(\Omega)^m)
\end{align}
in the sense of an improper Riemann integral.

\begin{lemma}
\label{lem: OD implies Riesz transform}
Suppose $\{\e^{-tL}\}_{t>0}$ satisfies $\L^q \to \L^2$ off-diagonal estimates for some $q \in (1,2)$. Then $\nabla L^{-1/2}$ is $\L^p$ bounded for every $p \in (q,2)$. The bound depends on $p$, $q$, ellipticity, dimensions, geometry and constants implicit in the assumption.
\end{lemma}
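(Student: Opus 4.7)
The strategy is to apply Blunck--Kunstmann's weak type criterion (Proposition~\ref{Prop: BK Theorem}) to $T = \nabla L^{-1/2}$ with the mollifier $A_r := I - (I - \e^{-r^2 L})^n$ for an integer $n > d/4$ to be fixed. The $\L^2$-boundedness of $T$ comes from Theorem~\ref{Thm: Kato}, and by Proposition~\ref{Prop: Hypercontractivity}.\eqref{iv Hypercontractivity} the hypothesis upgrades to $\L^q \to \L^2$ off-diagonal estimates for the gradient of the semigroup, namely
\[
 \|\nabla \e^{-zL} v\|_{\L^2(F)} \lesssim |z|^{-(\gamma+1)/2} \e^{-c\dist(E,F)^2/|z|}\|v\|_{\L^q(E)} \qquad (z \in \S_\psi^+)
\]
for a suitable sector, with $\gamma := d/q - d/2 > 0$.

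Verifying condition~\eqref{Eq: BK Condition 2} for $A_r$ is routine and parallels the treatment in the proof of Lemma~\ref{Lem: OD implies Hinfty}: expand $A_r$ by the binomial formula and apply the $\L^q \to \L^2$ off-diagonal estimates term-by-term to each $\e^{-kr^2L}$ ($k = 1,\dots,n$), producing a factor $g(j) \lesssim \e^{-c 4^j/n}$ which is trivially summable against $2^{dj/2}$.

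The main step is condition~\eqref{Eq: BK Condition 1}. The key observation is that $I - A_r = (I - \e^{-r^2 L})^n$, so $L^{-1/2}(I - A_r) = h(L)$ for $h(\xi) := \xi^{-1/2}(1-\e^{-r^2\xi})^n$, which belongs to $\H_0^\infty(\S_\psi^+)$ and obeys $|h(\xi)| \leq C_n \min\{|\xi|^{-1/2},\,r^{2n}|\xi|^{n-1/2}\}$ by the mean value theorem applied to $1-\e^{-r^2\xi}$. Lemma~\ref{Lem: Functional calculus via Laplace transform} then represents $h(L)$ as a Bochner integral of semigroup operators with weights $\eta_\pm(z)$ satisfying the matching bound $|\eta_\pm(z)| \leq C_n \min\{|z|^{-1/2},\,r^{2n}|z|^{-n-1/2}\}$, obtained by a direct substitution in the defining integral. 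Because $\nabla$ is closed and the integrand is absolutely Bochner-integrable (thanks to the $\L^2$-gradient bounds from Proposition~\ref{Prop: OD estimates for semigroup on L2}), one may pull $\nabla$ under the integral. Applying the $\L^q \to \L^2$ off-diagonal estimates for $\nabla \e^{-zL}$ together with Minkowski's inequality yields
\[
 \|T(I-A_r) u\|_{\L^2(C_j(B))} \lesssim \Big(\int_0^\infty \min\{t^{-1/2},\,r^{2n} t^{-n-1/2}\}\, t^{-(\gamma+1)/2} \e^{-c 4^{j-1}r^2/t}\, \d t\Big) \|u\|_{\L^q(B)}.
\]
Splitting the inner integral at $t = r^2$ and performing the substitution $\sigma = 4^{j-1}r^2/t$ on each piece yields $g(j) \lesssim r^{-\gamma}\bigl(\e^{-c 4^{j-1}} + 2^{-j(2n+\gamma)}\bigr)$, which is summable against $2^{dj/2}$ provided $n > d/4$.

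Once both BK conditions are in hand, Proposition~\ref{Prop: BK Theorem} produces the weak type $(q,q)$ estimate for $T$, and Marcinkiewicz interpolation with the $\L^2$-bound furnishes $\L^p$-boundedness for every $p \in (q,2)$. The main technical obstacle is the split-integral estimate in condition~\eqref{Eq: BK Condition 1}; the exponent $n$ must be chosen large enough (relative to $d$ and $\gamma$) to extract sufficient polynomial decay from the $r^{2n}|z|^{-n-1/2}$ branch of $|\eta_\pm|$, mirroring the analogous bookkeeping used in the proof of boundedness of the $\H^\infty$-calculus.
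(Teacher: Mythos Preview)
Your approach is correct and in fact takes a somewhat different route from the paper's own proof. The paper works directly from the real-variable representation $\nabla L^{-1/2}u = \tfrac{2}{\sqrt\pi}\int_0^\infty \nabla \e^{-t^2L}u\,\d t$, expands $(1-A_r)$ binomially, and after a change of variable is left with the kernel $g(t)=\sum_{k=0}^n\binom{n}{k}(-1)^k\ind_{(0,\infty)}(t-k)(t-k)^{-1/2}$. The crux is then to show $|g(t)|\lesssim t^{-n-1/2}$ for large $t$, which the paper does by a contour-integral/residue trick. Your route instead recognizes $L^{-1/2}(I-A_r)=h(L)$ with $h\in\H_0^\infty$ and recycles the Laplace-inversion machinery of Lemma~\ref{Lem: Functional calculus via Laplace transform} exactly as in the $\H^\infty$-calculus proof; the cancellation that the paper extracts from the residue computation is for you already encoded in the decay $|\eta_\pm(z)|\lesssim r^{2n}|z|^{-n-1/2}$. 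Your argument is more uniform with Section~\ref{Sec: Hinfty}; the paper's is more self-contained at the real-variable level.

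One technical point deserves tightening. You justify pulling $\nabla$ under the contour integral by saying ``the integrand is absolutely Bochner-integrable (thanks to the $\L^2$-gradient bounds from Proposition~\ref{Prop: OD estimates for semigroup on L2})''. That is not quite right \emph{globally} on $\Omega$: combining $|\eta_\pm(z)|\lesssim |z|^{-1/2}$ with $\|\nabla\e^{-zL}u\|_2\lesssim|z|^{-1/2}\|u\|_2$ gives $|z|^{-1}$ near the origin, which is not integrable along $\Gamma_\pm$. The fix is to localize first: restrict to $C_j(B)\cap\Omega$, where the $\L^q\to\L^2$ off-diagonal bound for $\nabla\e^{-zL}$ supplies the factor $\e^{-c4^{j-1}r^2/|z|}$ and makes $z\mapsto \eta_\pm(z)\,\nabla\e^{-zL}u|_{C_j(B)}$ absolutely Bochner-integrable in $\L^2(C_j(B)\cap\Omega)$. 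Then Hille's theorem (with $\nabla$ closed on that subdomain) lets you interchange, and the rest of your estimate goes through unchanged.
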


\begin{proof}
We appeal to Proposition~\ref{Prop: BK Theorem} with $T = \nabla L^{-1/2}$ and $A_r = 1 - (1- \e^{-r^2 L})^n$, where $n \in \IN$ has to be determined yet. We have seen that $T$ is $\L^2$ bounded. From the proof of Lemma~\ref{Lem: OD implies Hinfty} we also know that  $\L^q \to \L^2$ off-diagonal estimates for the semigroup imply \eqref{Eq: BK Condition 2} for any choice of $n$. So, we only have to check \eqref{Eq: BK Condition 1}. 

For the argument we put $\gamma := d/q - d/2 > 0$, let $B \subseteq \R^d$ be an open ball with radius $r>0$, and $u \in \L^2(\Omega)^m$ have its support in $B \cap \Omega$.  We calculate $T(1-A_r)u$ via \eqref{Eq: Riesz transform representation} and expand $A_r$ by the binomial theorem. This leads to the formula
\begin{align*}
 T(1-A_r) u 
= \frac{2}{\sqrt{\pi}} \int_0^\infty \sum_{k=0}^n (-1)^k \binom{n}{k} \nabla \e^{-(t^2 + kr^2)L} u \; \d t, 
\end{align*}
which, by substituting $t^2/r^2 + k$, can more conveniently be written as
\begin{align*}
T(1-A_r) u = \frac{1}{\sqrt{\pi}} \int_0^\infty g(t) r \nabla \e^{-r^2tL} u \;\d t,
\end{align*}
where
\begin{align}
\label{Eq1: OD implies Riesz transform}
 g(t) = \sum_{k=0}^n \binom{n}{k} (-1)^k \frac{\ind_{(0,\infty)}(t- k)}{\sqrt{t - k}}.
\end{align}
Proposition~\ref{Prop: Hypercontractivity}.\eqref{iv Hypercontractivity} yields $\L^q \to \L^2$ off-diagonal estimates for $\{\sqrt{t} \nabla \e^{-tL}\}_{t>0}$. Let $C,c \in (0,\infty)$ be the implied constants and recall from Remark~\ref{Rem: Hypercontractivity} that they do not bring in further dependencies. Taking $\L^2(C_j(B) \cap \Omega)$-norms in the above formula, we find for $j \geq 2$,
\begin{align}
\label{Eq2: OD implies Riesz transform}
 \bigg(\int_{C_j(B) \cap \Omega} |T(1-A_r)u|^2 \bigg)^{1/2}
\leq
C \pi^{-1/2} r^{-\gamma} (I_{-} + I_{+})  \bigg(\int_{B \cap \Omega} |u|^{q} \bigg)^{q},
\end{align}
where
\begin{align*}
 I_{-} = \int_0^{4n} |g(t)| t^{-\gamma/2-1/2} \e^{- c 4^{j-1}/t} \; \d t, \qquad I_{+} = \int_{4n}^\infty |g(t)| t^{-\gamma/2-1/2} \e^{- c 4^{j-1}/t} \; \d t.
\end{align*}
It remains to bound these integrals. We begin with the crude estimate
\begin{align*}
 I_{-} \leq \e^{-c \frac{4^{j-1}}{8n}} \int_0^{4n} |g(t)| t^{-\frac{\gamma}{2}-\frac{1}{2}} \e^{-\frac{c}{2t}} \; \d t,
\end{align*}
where the remaining integral is finite since $|g|$ is integrable on $(0,4n)$ and the other factor remains bounded as $t \to 0$. As for $I_+$, we first note that for $t> 4n$ all characteristic functions in \eqref{Eq1: OD implies Riesz transform} evaluate to $1$. Hence, the residue theorem yields
\begin{align*}
 g(t) = \frac{(-1)^n}{2 \pi \i} \oint_{|z|= t/2} \frac{n!}{z(z-1)\cdots(z-n)} \frac{1}{\sqrt{t-z}} \; \d z.
\end{align*}
Along the path of integration $t-z, z,z-1,\ldots, z-n$ are of absolute value at least $t/4$ each. Thus $|g(t)| \leq \alpha t^{-n-1/2}$ for some $\alpha \in (0,\infty)$ depending on $n$. In conclusion,
\begin{align*}
 I_+
 \leq \alpha \int_{4n}^\infty t^{-\gamma/2-n} \e^{- c 4^{j-1}/t} \; \frac{\d t}{t}
 \leq \alpha 2^{-\gamma(j-1)} 4^{-(j-1)n} \int_0^\infty s^{-\gamma/2-n} \e^{-c/s} \; \frac{\d s}{s},
\end{align*}
and the integral in $s$ is finite. We have found $I_{-} + I_{+} \leq g(j)$ with $\{2^{dj/2} g(j)\}_{j \geq 2}$ summable provided $\gamma+ 2n > d/2$. For such choice of $n$, \eqref{Eq: BK Condition 1} follows from \eqref{Eq2: OD implies Riesz transform}.
\end{proof}

\begin{corollary}
\label{Cor: Riesz estimate}
If $p_0 \in \cJ(L) \cap [1,2)$, then for every $p \in (p_0,2)$ the lower bound
\begin{align*}
 \|u\|_{\IW^{1,p}} \lesssim \|L^{1/2} u\|_{p} \qquad (u \in \IW^{1,2}(\Omega)).
\end{align*}
The implied constant depends on $\Jnorm{p_0}$, $p$, and geometry.
\end{corollary}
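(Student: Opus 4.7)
The estimate $\|u\|_{\IW^{1,p}} \lesssim \|L^{1/2} u\|_p$ splits naturally as $\|u\|_p + \|\nabla u\|_p \lesssim \|L^{1/2} u\|_p$. Both sides are finite for $u \in \IW^{1,2}(\Omega)$: by Theorem~\ref{Thm: Kato} we have $\IW^{1,2}(\Omega) = \dom(L^{1/2})$, so $L^{1/2} u \in \L^2(\Omega)^m \subseteq \L^p(\Omega)^m$ (since $\Omega$ is bounded and $p < 2$), and likewise $u$ and $\nabla u$ belong to $\L^p$. Moreover, Lemma~\ref{Lem: Expontial stability} makes $L$, and hence $L^{1/2}$, invertible on $\L^2$, so $u = L^{-1/2} L^{1/2} u$ holds in $\L^2$. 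The plan is therefore to show that $L^{-1/2}$ and $\nabla L^{-1/2}$, defined \emph{a priori} on $\L^2(\Omega)^m$, each extend boundedly to $\L^p$ on $\L^2 \cap \L^p$; applied to $v = L^{1/2} u$ these will yield the two desired bounds.

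For the gradient term I would invoke Lemma~\ref{lem: OD implies Riesz transform}. Since $\cJ(L)$ is an interval containing $p_0$, Proposition~\ref{Prop: Hypercontractivity}\eqref{i Hypercontractivity} yields $\L^{p_0} \to \L^2$ boundedness of $\cS = \{\e^{-zL}\}_{z \in \S^+_\psi}$. Choosing $q := (p+p_0)/2 \in (p_0, p)$, Proposition~\ref{Prop: Hypercontractivity}\eqref{iii Hypercontractivity} then upgrades this to $\L^q \to \L^2$ off-diagonal estimates for $\cS$. Since $p \in (q, 2)$, Lemma~\ref{lem: OD implies Riesz transform} provides $\L^p$-boundedness of $\nabla L^{-1/2}$ with constant depending only on $\Jnorm{p_0}$, $p$, and geometry; tracking the implicit constants is routine in view of Remark~\ref{Rem: Hypercontractivity}.

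For the $\L^p$-norm of $u$ itself, I plan to exploit the Balakrishnan-type representation
\begin{align*}
 L^{-1/2} v = \frac{1}{\sqrt{\pi}} \int_0^\infty t^{-1/2} \e^{-tL} v \, \d t \qquad (v \in \L^2(\Omega)^m),
\end{align*}
which converges as an $\L^2$-valued improper Riemann integral thanks to the functional calculus discussed in Section~\ref{Subsec: square root L}. Since $\cJ(L)$ is an interval containing both $p_0$ and $2$, the exponent $p$ also lies in $\cJ(L)$, so the semigroup is uniformly $\L^p$-bounded. Riesz--Thorin interpolation of this bound against the exponential $\L^2$-decay $\|\e^{-tL}\|_{\L^2 \to \L^2} \leq \e^{-\lambda t/2}$ from Lemma~\ref{Lem: Expontial stability} supplies $\|\e^{-tL}\|_{\L^p \to \L^p} \leq M \e^{-\sigma t}$ for some $\sigma > 0$ and $M$ depending on $\Jnorm{p_0}$, $\lambda$, and $p$. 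Splitting the integral at $t = 1$, the factor $t^{-1/2}$ is integrable near zero and the exponential tail handles $t \to \infty$; the integral therefore converges absolutely in $\L^p$ on $\L^2 \cap \L^p$, giving $\|L^{-1/2} v\|_p \lesssim \|v\|_p$ with the claimed dependencies, and specialising $v = L^{1/2} u$ closes the argument.

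I do not foresee a substantive obstacle, since the hard work has already been done in Proposition~\ref{Prop: Hypercontractivity}, Lemma~\ref{lem: OD implies Riesz transform}, and Theorem~\ref{Thm: Kato}. The only minor technical point that needs care is verifying that the Balakrishnan integral, originally an $\L^2$-valued improper Riemann integral, converges to the same element in the $\L^p$-topology on $\L^2 \cap \L^p$; this follows at once from the absolute $\L^p$-convergence established above together with the continuous inclusion $\L^2(\Omega)^m \subseteq \L^p(\Omega)^m$.
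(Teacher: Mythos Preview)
Your proposal is correct and follows essentially the same route as the paper. The gradient bound via Proposition~\ref{Prop: Hypercontractivity} with $q=(p+p_0)/2$ and Lemma~\ref{lem: OD implies Riesz transform} is identical, and your Balakrishnan representation $L^{-1/2}v=\pi^{-1/2}\int_0^\infty t^{-1/2}\e^{-tL}v\,\d t$ is the paper's resolution of the identity \eqref{Eq: Resolution of the identity} after the substitution $t\mapsto t^2$; the paper likewise interpolates the $\L^{p_0}$ bound with the exponential $\L^2$ decay from Lemma~\ref{Lem: Expontial stability} to make this integral converge in $\L^p$. One small wording issue: your phrase ``interpolation of this bound'' should refer to the $\L^{p_0}$ bound (as your dependency on $\Jnorm{p_0}$ indicates), not the $\L^p$ bound, since interpolating between $p$ and $2$ would only give decay at exponents strictly between them.
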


\begin{proof}
Let $p \in (p_0,2)$. Proposition~\ref{Prop: Hypercontractivity} provides $\L^q \to \L^2$ off-diagonal estimates for $\{\e^{-tL}\}_{t>0}$, for instance for the choice $q=(p+p_0)/2$, and implied constants depend on $\Jnorm{p_0}$, $p$, and geometry, see Remark~\ref{Rem: Hypercontractivity}. Hence, Lemma~\ref{lem: OD implies Riesz transform} applies. An equivalent way of stating its conclusion is the estimate 
\begin{align*}
 \|\nabla u\|_p \lesssim \|L^{1/2}u \|_p \qquad (u \in \IW^{1,2}(\Omega)),
\end{align*}
where the implied constant shares the same dependencies. To add $\|u\|_p$ on the left-hand side, we first interpolate the assumed $\L^{p_0}$ bound for the semigroup with the exponential decay on $\L^2$ stated in Lemma~\ref{Lem: Expontial stability}. This yields $\|\e^{-tL}\|_{\L^p \to \L^p} \leq \Jnorm{p_0}^{1-\theta} \e^{-\lambda \theta t/2}$ for $t>0$, where $1/p = (1-\theta)/p_0 + \theta/2$. Now we can use \eqref{Eq: Resolution of the identity} to give
\begin{align*}
 \|u\|_p \leq \frac{2}{\sqrt{\pi}} \int_0^\infty \Jnorm{p_0}^{1-\theta} \e^{- \lambda \theta t^2 /2} \|L^{1/2} u\|_p \; \d t = \frac{\sqrt{2} \Jnorm{p_0}^{1-\theta}}{\sqrt{\lambda \theta}} \|L^{1/2} u\|_p. & \qedhere
\end{align*}
\end{proof}

\section{A Calder\'{o}n--Zygmund decomposition}
\label{Sec: CZ decomposition}

\noindent In this section we shall craft a Calder\'{o}n--Zygmund decomposition within $\IW^{1,p}$. We extend the approach from \cite{ABHR}. The crucial insight in this paper was that the following Hardy inequality can be used to maintain Dirichlet boundary conditions for the `good' and all `bad' functions. For a proof see  \cite[Thm.~6.1]{ABHR} or \cite[Thm.~3.2]{Hardy-Poincare}. We agree on $\dist(x, \emptyset) = \infty$ for $x \in \R^d$ so that the estimate below holds trivially for empty Dirichlet parts.

\begin{proposition}
\label{Prop: Hardy}
Suppose $\Omega$ is a bounded domain that satisfies Assumptions~\ref{D} and \ref{N} and let $p \in (1,\infty)$. For every $k=1,\ldots,m$ there is a Hardy-type inequality
\begin{align*}
 \int_\Omega \bigg| \frac{v}{\dist_{D_k}} \bigg|^p \lesssim \int_\Omega |\nabla v|^p \qquad (v \in \W^{1,p}_{D_k}(\Omega)).
\end{align*}

\end{proposition}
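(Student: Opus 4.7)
The plan is to reduce the local Hardy inequality on $\Omega$ to a global one on $\R^d$ by extension, and then to derive the latter from the $(d-1)$-Ahlfors regularity of $D_k$.

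By density of $\C_{D_k}^\infty(\Omega)$ in $\W^{1,p}_{D_k}(\Omega)$, it suffices to prove the inequality for smooth $v \in \C_{D_k}^\infty(\Omega)$. I would then apply the bounded Sobolev extension operator $E_k \colon \W^{1,p}_{D_k}(\Omega) \to \W^{1,p}_{D_k}(\R^d)$ recalled in Section~\ref{Subsec: Sobolev spaces} (whose existence relies on Assumption~\ref{N}) to obtain a compactly supported extension $\tildu \in \W^{1,p}_{D_k}(\R^d)$. Since $\dist_{D_k}$ is computed with respect to the same set $D_k$ whether one views it inside $\Omega$ or inside $\R^d$, restricting the resulting global integral to $\Omega$ dominates the left-hand side of the claim. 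The matter thus reduces to the global Hardy estimate
\begin{align*}
\int_{\R^d} \Big|\frac{u}{\dist_E}\Big|^p \lesssim \int_{\R^d} |\nabla u|^p
\end{align*}
for closed $(d-1)$-Ahlfors regular $E \subseteq \R^d$ and compactly supported $u \in \W^{1,p}_E(\R^d)$.

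For this global inequality I would invoke Maz'ya's capacitary characterization: the Hardy estimate is equivalent to \emph{uniform $(1,p)$-thickness} of the zero set, meaning $\mathrm{cap}_{1,p}(E \cap B(x,r), B(x,2r)) \gtrsim r^{d-p}$ for all $x \in E$ and $r \in (0,1)$. Assumption~\ref{D} guarantees $\cH_{d-1}(E \cap B(x,r)) \gtrsim r^{d-1}$, and Frostman/Adams-type lower bounds for Riesz capacities in terms of Hausdorff content yield the desired capacitary bound for every $p \in (1,\infty)$. Once uniform thickness is in place, the homogeneous Hardy inequality can be built from a Whitney decomposition of $\R^d \setminus E$: each Whitney cube $Q$ satisfies $\ell(Q) \simeq \dist(Q, E)$, and thickness of the nearby portion of $E$ yields a local Poincar\'e-type bound $\int_Q |u|^p \lesssim \ell(Q)^p \int_{Q^*} |\nabla u|^p$ (with $Q^*$ a moderate dilate); multiplying by $\dist_E^{-p} \sim \ell(Q)^{-p}$ and summing over Whitney chains leading from each cube down to $E$ produces the desired Hardy estimate.

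Combining the global Hardy inequality applied to $\tildu$ with the extension bound gives $\int_\Omega |v/\dist_{D_k}|^p \lesssim \|v\|_p^p + \|\nabla v\|_p^p$, so it remains to absorb the $\L^p$ term. I would handle this with a standalone Poincar\'e inequality for $\W^{1,p}_{D_k}(\Omega)$ when $D_k \neq \emptyset$, which is strictly weaker than Hardy and follows by the same capacitary chaining strategy without tracking the weight $\dist_{D_k}^{-p}$; the degenerate case $D_k = \emptyset$ is trivial from the convention $\dist_\emptyset \equiv \infty$. I expect the main obstacle to be Maz'ya's implication from uniform $(1,p)$-thickness to the Hardy estimate: the Ahlfors-regularity-to-thickness step is a routine Frostman/capacity computation, but the thickness-to-Hardy direction requires careful truncation in the Whitney decomposition together with a quasi-Riesz-potential argument, and this is the technical heart of the cited theorems.
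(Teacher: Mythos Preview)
The paper does not supply its own proof of this proposition; it simply cites \cite[Thm.~6.1]{ABHR} and \cite[Thm.~3.2]{Hardy-Poincare}. Your strategy---reduce to a global Hardy inequality via the extension operator, then deduce the global inequality from uniform $(1,p)$-thickness of $D_k$, which in turn follows from its $(d-1)$-Ahlfors regularity---is exactly the route taken in \cite{Hardy-Poincare}, so your proposal is aligned with the intended argument.

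Two points deserve tightening. First, the extension $E_k v$ is not compactly supported in general; you need an additional smooth cutoff away from $\cl{\Omega}$ (as the paper itself does on p.~176 of \cite{ABHR}, and again in Step~1 of the Calder\'on--Zygmund lemma here) before invoking the global Hardy estimate. Second, your local Poincar\'e claim $\int_Q |u|^p \lesssim \ell(Q)^p \int_{Q^*} |\nabla u|^p$ on a single Whitney cube with a \emph{moderate} dilate $Q^*$ is not correct as stated: for Whitney cubes far from $E$ the function has no reason to vanish on $Q^*$, so only a Poincar\'e inequality with mean holds there. The correct mechanism---which you do allude to with ``Whitney chains leading from each cube down to $E$''---is to telescope the means along a chain reaching a cube adjacent to $E$, where thickness finally kills the mean. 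In \cite{Hardy-Poincare} this is packaged as a pointwise Hardy inequality $|u(x)|/\dist_E(x) \lesssim \big(M(|\nabla u|^q)(x)\big)^{1/q}$ for some $q<p$ (following Lehrb\"ack), after which the maximal theorem gives the integral bound directly and the absorption step via a separate Poincar\'e inequality becomes unnecessary.
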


We let $\cQ$ be the system of closed axis-parallel cubes with non-empty interior in $\R^d$. The Hardy-Littlewood maximal operator $M$ is defined for $u \in \Lloc^1(\R^d)$ by
\begin{align*}
 (M u)(x) := \sup_{x \in Q \in \cQ} \barint_Q |u| \qquad (x \in \R^d),
\end{align*}
where $\barint_Q := |Q|^{-1}\int_Q$ denotes the average over $Q$. Then $|u| \leq M u$ holds almost everywhere and for some constant $c_d > 0$ depending only on $d$ there is a weak type estimate
\begin{align*}
 \big|\big\{x \in \R^d : \, |(M u)(x)| > \alpha \big\}\big| \leq \frac{c_d}{\alpha} \|u\|_{\L^1(\R^d)} \qquad (\alpha > 0, \, u \in \L^1(\R^d)),
\end{align*}
see for example \cite[Ch.~2]{Grafakos}. Let us denote the coordinates of a $\IC^m$-valued function $v$ by $v^{(k)}$.

\begin{lemma}
\label{Lem: Adapted CZ decomposition}
Suppose $\Omega$ is a bounded domain with Assumptions~\ref{D} and \ref{N} and let $1 < p < \infty$. For every $u \in \IW^{1,p}(\Omega)$ and every $\alpha > 0$ there exists a countable index set $J$, cubes $Q_j \in \cQ$, $j \in J$, and measurable functions $g, b_j: \Omega \to \IC^m$ such that for some $C \geq 1$, independent of $u$ and $\alpha$, the following hold.
\begin{enumerate} 
 \item \label{i} $\displaystyle u = g + \sum_{j \in J} b_j$ pointwise almost everywhere. \\[3pt]

 \item \label{ii} Each $b_j$ has support in $Q_j$ and each $x \in \R^d$ is contained in at most $12^d$ of the $Q_j$. \\[3pt]
 
 \item \label{iii} $g \in \IW^{1,\infty}(\Omega)$ with $\displaystyle \|g \|_{\IW^{1,\infty}(\Omega)} + \sum_{k=1}^m \|g^{(k)}/\dist_{D_k}\|_{\L^\infty(\Omega)} \leq C \alpha$. \\[3pt]

 \item \label{iv} $b_j \in \IW^{1,p}(\Omega)$ with $\displaystyle \int_\Omega \bigg(\fabs{\nabla b_j}^p + \fabs{b_j}^p + \sum_{k=1}^m \bigg|\frac{b_j^{(k)}}{\dist_{D_k}} \bigg|^p \bigg) \leq C \alpha^p \fabs{Q_j}$ for every $j \in J$. \\[3pt]

 \item \label{v} $\displaystyle \sum_{j \in J} \fabs{Q_j} \leq \frac{C}{\alpha^p} \|u\|_{\IW^{1,p}(\Omega)}^p$. \\[3pt]

 \item \label{vi} $g \in \IW^{1,p}(\Omega)$ with $\|g\|_{\IW^{1,p}(\Omega)} \leq C \|u\|_{\IW^{1,p}(\Omega)}$. \\[3pt]

 \item \label{vii} If $u \in \L^q(\Omega)^m$ for some $q \in [1,\infty)$, then also $b_j \in \L^q(\Omega)^m$ for every $j \in J$. \\[3pt]
 
 \item \label{viii} For every subset $J' \subseteq J$ the sum $\sum_{j \in J'} b_j$ converges unconditionally in $\IW^{1,p}(\Omega)$.
\end{enumerate}
\end{lemma}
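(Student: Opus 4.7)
The plan is to adapt the Calder\'on--Zygmund strategy of \cite{ABHR} to the vectorial setting where each coordinate $u^{(k)}$ carries its own Dirichlet part $D_k$. First, use the extension operators $E_k$ from Section~\ref{Subsec: Sobolev spaces} to extend each $u^{(k)}$ to a compactly supported $\tildu^{(k)} \in \W^{1,p}_{D_k}(\R^d)$ with comparable norm. A global version of Proposition~\ref{Prop: Hardy} (available because each $D_k$ is either empty or $(d-1)$-Ahlfors regular and $\tildu^{(k)}$ vanishes on $D_k$) ensures that
\begin{align*}
 F(x) := \fabs{\tildu(x)}^p + \fabs{\nabla \tildu(x)}^p + \sum_{k=1}^m \bigg|\frac{\tildu^{(k)}(x)}{\dist_{D_k}(x)}\bigg|^p
\end{align*}
belongs to $\L^1(\R^d)$ with norm $\lesssim \|u\|_{\IW^{1,p}(\Omega)}^p$. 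Put $\Xi := \{M F > \alpha^p\}$; the weak-type $(1,1)$ of $M$ gives $|\Xi| \lesssim \alpha^{-p}\|u\|_{\IW^{1,p}(\Omega)}^p$. Cover $\Xi$ by a Whitney family of dyadic cubes $\{Q_j\}_{j \in J}$ with $\ell(Q_j) \simeq \dist(Q_j, \Xi^c)$ and the standard bounded overlap of $\{2Q_j\}$, and attach a smooth partition of unity $\{\chi_j\}$ subordinate to mild dilates of the $Q_j$ satisfying $\|\nabla \chi_j\|_\infty \lesssim \ell(Q_j)^{-1}$ and $\sum_j \chi_j = \ind_\Xi$.

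Next, build the bad functions through a coordinatewise near/far dichotomy. Put $c_j^{(k)} := \barint_{Q_j} u^{(k)}$ and set
\begin{align*}
 b_j^{(k)} := \begin{cases} (u^{(k)} - c_j^{(k)}) \chi_j & \text{if } 4Q_j \cap D_k = \emptyset \quad\text{(far)}, \\ u^{(k)} \chi_j & \text{if } 4Q_j \cap D_k \neq \emptyset \quad\text{(near)}. \end{cases}
\end{align*}
In the near case $b_j^{(k)}$ inherits the Dirichlet trace from $u^{(k)}$, while in the far case $\supp(b_j^{(k)})$ lies strictly away from $D_k$; in both situations $b_j \in \IW^{1,p}(\Omega)$. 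Define $g := u - \sum_j b_j$. Then $g = u$ on $\R^d \setminus \Xi$, and on $\Xi$ only the far contributions survive, namely $g^{(k)} = \sum_{j:\, 4Q_j \cap D_k = \emptyset} c_j^{(k)} \chi_j$.

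Properties (ii), (v), (vii) and (viii) are immediate consequences of, respectively, Whitney geometry, the measure bound on $\Xi$, the pointwise definition of $b_j$, and bounded overlap combined with (iv). For (iv), each Whitney cube has an enlarged neighbor $\hat Q_j$ meeting $\Xi^c$ so that $\barint_{\hat Q_j} F \leq \alpha^p$; Poincar\'e on $Q_j$ then handles the subtracted mean in the far case, the Hardy term is a direct summand of $F$ in the near case and is majorised by $\ell(Q_j)^{-p}|u^{(k)} - c_j^{(k)}|^p$ in the far case (since $\dist_{D_k} \gtrsim \ell(Q_j)$ on $\supp \chi_j$). Then (vi) follows from (iii) and (v).

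The main difficulty will be (iii). Outside $\Xi$, Lebesgue differentiation of $F$ at its Lebesgue points yields $\fabs{u}, \fabs{\nabla u}, |u^{(k)}/\dist_{D_k}| \leq \alpha$ almost everywhere, so $g = u$ is controlled there. Inside $\Xi$ the key estimate is
\begin{align*}
 |c_j^{(k)}| \leq \ell(Q_j) \Bigl(\barint_{Q_j} \bigl|u^{(k)}/\dist_{D_k}\bigr|^p\Bigr)^{1/p} \lesssim \alpha \, \ell(Q_j),
\end{align*}
valid in the far case since $\dist_{D_k}(x) \gtrsim \ell(Q_j)$ on $Q_j$ and $\barint_{Q_j} F \lesssim \alpha^p$ through the Whitney neighbor. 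Combined with $\dist_{D_k}(x) \gtrsim \ell(Q_j)$ on $\supp \chi_j$ for far $j$, this yields $|g^{(k)}/\dist_{D_k}| \lesssim \alpha$ on $\Xi$. The uniform Lipschitz bound on $g$ follows by estimating $\nabla g^{(k)} = \sum c_j^{(k)} \nabla \chi_j$ on $\Xi$ via $\|\nabla \chi_j\|_\infty \lesssim \ell(Q_j)^{-1}$ and the finite overlap, then continuously gluing across $\partial \Xi$ by a telescoping of the $c_j^{(k)}$ over adjacent Whitney cubes.
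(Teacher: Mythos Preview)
Your overall strategy is the same as the paper's: extend, form the adapted maximal function including the Hardy terms, take a Whitney decomposition of the superlevel set, and define the bad pieces by subtracting means on cubes far from the Dirichlet part. The paper uses a three-way classification ($k$-usual, $k$-boring, $k$-special) instead of your two-way near/far split, but this is a minor technical device; your simpler split can be made to work.

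The genuine gap is in your treatment of (iii). Your ``key estimate''
\begin{align*}
 |c_j^{(k)}| \leq \ell(Q_j) \Bigl(\barint_{Q_j} \bigl|u^{(k)}/\dist_{D_k}\bigr|^p\Bigr)^{1/p}
\end{align*}
is false in the far case. If $\dist_{D_k} \gtrsim \ell(Q_j)$ on $Q_j$, then $|u^{(k)}| = \dist_{D_k} \cdot |u^{(k)}/\dist_{D_k}| \gtrsim \ell(Q_j)\,|u^{(k)}/\dist_{D_k}|$, which is the \emph{reverse} of what you need. (The inequality you wrote does hold on \emph{near} cubes, where $\dist_{D_k} \lesssim \ell(Q_j)$, but there no mean is subtracted.) In general $|c_j^{(k)}|$ is only of order~$\alpha$, not $\alpha\,\ell(Q_j)$: take $\tildu^{(k)} \equiv 1$ near a tiny far cube.

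This wrong bound is exactly what you feed into the gradient estimate. Writing $\nabla g^{(k)} = \sum c_j^{(k)} \nabla \chi_j$ and invoking only $\|\nabla \chi_j\|_\infty \lesssim \ell(Q_j)^{-1}$ with finite overlap yields $\sum |c_j^{(k)}|/\ell(Q_j)$, which blows up on small cubes. The mechanism you relegate to ``gluing across $\partial\Xi$'' is in fact the heart of the argument \emph{inside} $\Xi$: one must exploit the cancellation $\sum_{j} \nabla\chi_j = 0$ on $\Xi$ to replace each $c_j^{(k)}$ by $c_j^{(k)} - \barint_{Q^*} \tildu^{(k)}$ for a fixed enlarged Whitney neighbour $Q^*$ containing all cubes through the point $x$, and then control these differences by Poincar\'e on $Q^*$ (giving the missing factor $\ell(Q_j)$). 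The paper does precisely this in its Step~6, together with a separate bound on the virtual contribution $\sum_{j\text{ near}} c_j^{(k)}\nabla\chi_j$, which \emph{does} satisfy $|c_j^{(k)}| \lesssim \alpha\,\ell(Q_j)$ for the reason above. Likewise, the correct route to $|g^{(k)}/\dist_{D_k}| \lesssim \alpha$ on $\Xi$ is not to bound $|c_j^{(k)}|$ in isolation but to use that $\dist_{D_k}(y) \lesssim \dist_{D_k}(x)$ for $x,y$ in the same far cube, so that $|c_j^{(k)}|/\dist_{D_k}(x) \lesssim \barint_{Q_j} |\tildu^{(k)}/\dist_{D_k}| \lesssim \alpha$.
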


\begin{proof}
The proof follows the classical pattern relying on a Whitney decomposition of an exceptional set determined by an adapted maximal function. It is divided into seven steps.

\subsubsection*{Step 1: Adapted maximal function}

Recall the bounded Sobolev extension operators $E_k$ introduced in Section~\ref{Subsec: Sobolev spaces}. Then $\tildu^{(k)}:= E_k u^{(k)}$, $k =1,\ldots,m$, defines an extension $\tildu \in \IW^{1,p}(\R^d)$ of $u$. Since $\Omega$ is bounded, a suitable smooth truncation far away from $\cl{\Omega}$ allows us to modify the extension in such a way that even the Hardy-type terms as in Proposition~\ref{Prop: Hardy} are controlled, that is to say,
\begin{align}
\label{Eq: Def of CE}
\int_{\R^d} \bigg(|\tildu| + |\nabla \tildu| + \sum_{k=1}^m \bigg|\frac{\tildu^{(k)}}{\dist_{D_k}} \bigg|\bigg)^p \lesssim \int_\Omega \bigg(|u| + |\nabla u|\bigg)^p.
\end{align}
The procedure is explained in detail on p.176 of \cite{ABHR}. We define the open set
\begin{align*}
 U:= \bigg \{x \in \R^d : \, M \bigg(\Big(|\nabla \tildu| + |\tildu| + \sum_{k=1}^m \Big|\frac{\tildu^{(k)}}{\dist_{D_k}} \Big|\Big)^p \bigg) > \alpha^p \bigg \}.
\end{align*}
First we treat the case $U = \emptyset$. Then for the choices $J = \emptyset$ and $g = u$ all assertions are immediate except for \eqref{iii}. Referring to this, we use that $\tildu$ is an extension of $u$ and obtain for almost every $x\in \Omega$,
\begin{align*}
 \bigg(|\nabla g(x)| + |g(x)| + \sum_{k=1}^m \bigg|\frac{g^{(k)}(x)}{\dist_{D_k}(x)} \bigg|\bigg)^p
&= \bigg(|\nabla \tildu(x)| + |\tildu(x)| + \sum_{k=1}^m \bigg|\frac{\tildu^{(k)}(x)}{\dist_{D_k}(x)} \bigg| \bigg)^p.
\end{align*}
The right-hand side is dominated almost everywhere by its maximal function, which in turn in globally bounded by $\alpha^p$. We shall discuss at the end of the proof in the general case why this implies $g \in \IW^{1,\infty}(\Omega)$.

So, from now on we can assume that $U$ is a non-empty open subset of $\R^d$. By the weak type estimate for the maximal operator and \eqref{Eq: Def of CE} we obtain
\begin{align}
\label{Eq1: Adapted CZ decomposition}
\begin{split}
 \fabs{U} 
&\lesssim \frac{1}{\alpha^p} \bigg \| |\nabla \tildu| + |\tildu| + \sum_{k=1}^m \bigg|\frac{\tildu^{(k)}}{\dist_{D_k}} \bigg| \bigg\|_p^p
\lesssim \frac{1}{\alpha^p} \|u\|_{\IW^{1,p}}^p < \infty.
\end{split}
\end{align}
In particular, $F:= \R^d \setminus U$ is non-empty. This allows for choosing a Whitney decomposition of $U$, that is, an at most countable index set $J$ and a collection of cubes $Q_j \in \cQ$, $j \in J$, with diameter $d_j$ that satisfy
\begin{alignat*}{2}
 &\mathrm{(A)} \quad U = \bigcup_{j \in J} \frac{8}{9} Q_j, \hspace{50pt}
 &&\mathrm{(B)} \quad {\frac{8}{9}  Q_j^\circ} \cap {\frac{8}{9} Q_k^\circ} = \emptyset \text{ if $j \neq k$}, \\
 &\mathrm{(C)} \quad Q_j \subseteq U \text{ for all $j$}, \hspace{0pt}
 &&\mathrm{(D)} \quad \sum_{j \in J} \ind_{Q_j} \leq 12^d, \\
 &\mathrm{(E)} \quad \frac{5}{6} d_j \leq \dist(Q_j, F) \leq 4 d_j \text{ for all $j$},
\end{alignat*}
see \cite[Lemma 5.5.1/2]{Bennett-Sharpley} for this classical tool but replace the cubes $Q$ by their enlarged counterparts $\frac{9}{8} Q$ therein. Here, $Q^\circ$ denotes the interior of $Q$. Two important consequences can be recorded immediately: Firstly, (E) implies
\begin{align}
\label{Eq2: Adapted CZ decomposition: Enlarged cubes intersect complement}
12 \sqrt{d} Q_j \cap F \neq \emptyset \qquad (j \in J).
\end{align}
Secondly, (D) in combination with \eqref{Eq1: Adapted CZ decomposition} immediately implies \eqref{v} since
\begin{align}
\label{Eq99: Adapted CZ decomposition}
 \sum_{j \in J} \fabs{Q_j} \leq \int_U  \sum_{j \in J} \ind_{Q_j} \leq 12^d \fabs{U} \lesssim \frac{1}{\alpha^p} \|u\|_{\IW^{1,p}(\Omega)}^p.
\end{align}

\subsubsection*{Step 2: Definition of the good and bad functions}

Let $\{\varphi_j\}_{j \in J}$ be a partition of unity on $U$, that is $\sum_{j \in J} \varphi_j = 1$ on $U$, with the properties
\begin{alignat*}{2}
 &\mathrm{(a)} \quad \varphi_j \in \C^\infty(\R^d) \hspace{50pt}
 &&\mathrm{(b)} \quad \supp \varphi_j \subseteq Q_j^\circ \\
 &\mathrm{(c)} \quad \varphi_j = 1 \text{ on $\frac{8}{9} Q_j$} \hspace{50pt}
 &&\mathrm{(d)} \quad \|\varphi_j\|_\infty + d_j \|\nabla \varphi_j\|_\infty \lesssim 1
\end{alignat*}
for all $j \in J$, see \cite[Sec.~5.5]{Bennett-Sharpley} for the construction. Given $1 \leq k \leq m$, we distinguish between three properties a cube $Q_j$ can have:
\begin{alignat*}{1}
 &\bullet \quad \text{$Q_j$ is \emph{$k$-usual} if $d_j < 1 $ and $\dist(Q_j, D_k) \geq d_j$,} \\
 &\bullet \quad \text{$Q_j$ is \emph{$k$-boring} if $\dist(Q_j, D_k) \geq d_j \geq 1$,} \\
 &\bullet \quad  \text{$Q_j$ is \emph{$k$-special} if $\dist(Q_j, D_k) \leq d_j$.}
\end{alignat*}
Then we let $\tildu_{Q_j}^{(k)} := \barint_{Q_j} \tildu^{(k)}$ and define
\begin{align*}
 \tildb_j^{(k)} := \begin{cases}
		\varphi_j (\tildu^{(k)} - \tildu_{Q_j}^{(k)} ) & \text{if $Q_j$ is $k$-usual}\\
		\varphi_j \tildu^{(k)} & \text{if $Q_j$ is $k$-boring or $k$-special}
              \end{cases} \qquad (1 \leq k \leq m, \, j \in J).
\end{align*}
Setting $\tildg := \tildu - \sum_{j \in J} \tildb_j$ as well as $b_j:= \tildb_j|_\Omega$ and $g:= \tildg|_\Omega$, $j \in J$, these functions automatically satisfy \eqref{i}. Due to (D) there is no problem of convergence with this sum and also \eqref{ii} holds true. Moreover, \eqref{vii} holds since the extension operators $E_k$ are bounded $\L^q(\Omega) \to \L^q(\R^d)$ for every $1 \leq q < \infty$, see Section~\ref{Subsec: Sobolev spaces}.

Next, we check that the $b_j$ are contained in $\IW^{1,p}(\Omega)$: For fixed $1 \leq k \leq m$ we have $\tildb_j^{(k)} \in \W^{1,p}(\R^d)$ by construction. If $Q_j$ is either $k$-usual or $k$-boring, then $\dist(Q_j, D_k) \geq d_j > 0$ and via mollification $\tildb_j^{(k)}$ can be approximated by $\C_{D_k}^\infty(\R^d)$-functions in the norm of $\W^{1,p}(\R^d)$. If $Q_j$ is $k$-special, then $\tildu^{(k)} \in \W_{D_k}^{1,p}(\R^d)$ implies $\tildb_j^{(k)} = \varphi_j \tildu^{(k)} \in \W_{D_k}^{1,p}(\R^d)$.

\subsubsection*{Step 3: Proof of \eqref{iv}}

After the considerations above it remains to prove the estimate. To this end, we fix a coordinate $1 \leq k \leq m$.

We start with a $k$-usual cube, in which case $\nabla \tildb_j^{(k)} = \varphi_j \nabla \tildu^{(k)} + (\tildu^{(k)} - \tildu_{Q_j}^{(k)}) \nabla \varphi_j$. Using~(d),
\begin{align}
\label{Eq3: Adapted CZ decomposition}
\begin{split}
 \int_{Q_j} |\nabla \tildb_j^{(k)}|^p 
&\lesssim \int_{Q_j} \bigg(|\varphi_j \nabla \tildu^{(k)}|^p + |(\tildu^{(k)} - \tildu_{Q_j}^{(k)}) \nabla \varphi_j|^p \bigg)\\
&\lesssim \int_{Q_j} |\nabla \tildu^{(k)}|^p + \frac{1}{d_j^p} \int_{Q_j} |\tildu^{(k)} - \tildu_{Q_j}^{(k)}|^p,
\end{split}
\end{align}
where the rightmost integral can be estimated via Poincar\'{e}'s inequality
\begin{align}
\label{Eq4: Adapted CZ decomposition}
 \frac{1}{d_j^p} \int_{Q_j} |\tildu^{(k)} - \tildu_{Q_j}^{(k)}|^p
\lesssim \int_{Q_j} |\nabla \tildu^{(k)}|^p.
\end{align}
Invoking \eqref{Eq2: Adapted CZ decomposition: Enlarged cubes intersect complement}, we pick some $z_j \in Q_j^* \cap F$, where $Q_j^* = 12 \sqrt{d} Q_j$, in order to bring into play the maximal operator:
\begin{align}
\label{Eq5: Adapted CZ decomposition}
\begin{split}
 \int_{Q_j} |\nabla \tildb_j^{(k)}|^p
&\lesssim \int_{Q_j} \bigg(|\varphi_j \nabla \tildu^{(k)}|^p + |(\tildu^{(k)} - \tildu_{Q_j}^{(k)}) \nabla \varphi_j|^p \bigg)
\lesssim \int_{Q_j^*} |\nabla \tildu^{(k)}|^p \\
&\leq |Q_j^*| \barint_{Q_j^*} |\nabla \tildu^{(k)}|^p
\lesssim |Q_j| M(|\nabla \tildu|^p)(z_j).
\end{split}
\end{align}
Now, we capitalize $z_j \in F$ to give
\begin{align}
\label{Eq6: Adapted CZ decomposition}
 \int_\Omega |\nabla b_j^{(k)}|^p 
\leq \int_{Q_j} |\nabla \tildb_j^{(k)}|^p
 \lesssim \alpha^p \fabs{Q_j}.
\end{align}
The corresponding estimate for $|b_j^{(k)}|$ can be derived similarly, starting from
\begin{align}
\label{Eq7: Adapted CZ decomposition}
 \int_{\Omega} |b_j^{(k)}|^p
\leq \int_{Q_j} |\tildb_j^{(k)}|^p
= \int_{Q_j} |\tildu^{(k)} - \tildu_{Q_j}^{(k)}|^p|\varphi_j|^p
\lesssim d_j^p \int_{Q_j} |\nabla \tildu^{(k)}|^p
\leq \int_{Q_j} |\nabla \tildu^{(k)}|^p
\end{align}
and proceeding as in \eqref{Eq5: Adapted CZ decomposition} and \eqref{Eq6: Adapted CZ decomposition}. For the third term $b_j^{(k)}/\dist_{D_k}$ we note that on $k$-usual cubes $\dist_{D_k} \geq d_j$ holds and so by \eqref{Eq4: Adapted CZ decomposition} and the same argument as in \eqref{Eq5: Adapted CZ decomposition} and \eqref{Eq6: Adapted CZ decomposition} we get
\begin{align*}
 \int_{\Omega} \bigg|\frac{b_j^{(k)}}{\dist_{D_k}}\bigg|^p
\leq \int_{Q_j} \bigg| \frac{\tildb_j^{(k)}}{\dist_{D_k}}\bigg|^p
\lesssim \frac{1}{d_j^p} \int_{Q_j} |\tildu^{(k)} - \tildu_{Q_j}^{(k)}|^p
\lesssim \alpha^p \fabs{Q_j}.
\end{align*}

We turn to the $k$-boring cubes. Then $\tildb_j^{(k)} = \tildu^{(k)} \varphi_j$ and $\dist_{D_k} \geq d_j \geq 1$ a.e.\ on $Q_j$. By (d) we have
\begin{align}
\label{Eq11: Adapted CZ decomposition}
\begin{split}
|\tildb_j^{(k)}| + |\nabla \tildb_j^{(k)}| + \bigg| \frac{\tildb_j^{(k)}}{\dist_{D_k}} \bigg| 
&\leq |\tildu^{(k)} \varphi_j| + |\varphi_j \nabla \tildu^{(k)}| + |\tildu^{(k)} \nabla \varphi_j | + \bigg|\frac{\varphi_j \tildu^{(k)}}{\dist_{D_k}}\bigg|\\
&\lesssim |\tildu^{(k)}| + |\nabla \tildu^{(k)}| + \frac{1}{d_j} |\tildu^{(k)}| + \bigg| \frac{\tildu^{(k)}}{\dist_{D_k}} \bigg| \qquad(\text{a.e.\ on $Q_j$})
\end{split}
\end{align}
and the usual start of play for the maximal operator, following \eqref{Eq5: Adapted CZ decomposition} and \eqref{Eq6: Adapted CZ decomposition}, leads to
\begin{align}
\label{Eq10: Adapted CZ decomposition}
\begin{split}
\int_{\Omega} \bigg(|b_j^{(k)}|^p + |\nabla b_j^{(k)}|^p + \bigg|\frac{b_j^{(k)}}{\dist_{D_k}}\bigg|^p \bigg)
&\leq \int_{Q_j} \bigg(|\tildb_j^{(k)}| + |\nabla \tildb_j^{(k)}| + \bigg|\frac{\tildb_j^{(k)}}{\dist_{D_k}}\bigg| \bigg)^p\\
&\lesssim \int_{Q_j} \bigg(|\tildu^{(k)}| + |\nabla \tildu^{(k)}| + \frac{1}{d_j} |\tildu^{(k)}| + \bigg|\frac{\tildu^{(k)}}{\dist_{D_k}}\bigg| \bigg)^p \\
&\lesssim \int_{Q_j} \bigg( |\tildu^{(k)}| + |\nabla \tildu^{(k)}| + \bigg|\frac{\tildu^{(k)}}{\dist_{D_k}}\bigg| \bigg)^p\\
&\lesssim \alpha^p \fabs{Q_j}.
\end{split}
\end{align}
Finally, if $Q_j$ is $k$-special, then again $\tildb_j^{(k)} = \tildu^{(k)} \varphi_j$ and we conclude as in \eqref{Eq10: Adapted CZ decomposition} above with one crucial difference: This time we do not absorb the non-Hardy term $|\tildu^{(k)}| / d_j$ into $|\tildu^{(k)}|$, but rather we use
\begin{align}
\label{Eq12: Adapted CZ decomposition}
 \dist_{D_k}(x) = \dist(x,D_k) \leq \diam(Q_j) + \dist(Q_j, D_k) \leq 2 d_j \qquad (x \in Q_j),
\end{align}
in order to absorb it into the Hardy-term $|\tildu^{(k)}|/\dist_{D_k}$.

\subsubsection*{Step 4: Non-gradient terms of the good function}

Let $1 \leq k \leq m$. In this step we prove for almost every $x \in \R^d$ the estimate
\begin{align*}
 |\tildg^{(k)}(x)|^p + \bigg|\frac{\tildg^{(k)}(x)}{\dist_{D_k}(x)}\bigg|^p \lesssim \alpha^p.
\end{align*}
On $F$ all bad functions $\tildb_j$ vanish. Hence, $\tildg = \tildu$ on this set and the required estimate follows on controlling the left-hand side above by its maximal function. So, we can concentrate on $x \in U$. Denoting by $J_u$, $J_b$, and $J_s$ the sets of those $j \in J$ such that $Q_j$ is $k$-usual, $k$-boring, and $k$-special, respectively, we obtain on $U$ that
\begin{align*}
 \tildg^{(k)}
= \tildu^{(k)} - \sum_{j \in J} \tildb_j^{(k)}
= \tildu^{(k)} - \sum_{j \in J_u} \varphi_j(\tildu^{(k)} - \tildu_{Q_j}^{(k)}) - \sum_{j \in J_b \cup J_s} \varphi_j\tildu^{(k)}
= \sum_{j \in J_u} \tildu_{Q_j}^{(k)}\varphi_j,
\end{align*}
since $\sum_{j \in J} \varphi_j = 1$ on $U$. Now, let $x \in U$ and let $J_{u,x}$ be the set of those $j \in J_u$ for which $x$ is contained in the $k$-usual cube $Q_j$. We recall from (D) that $\# J_{u,x} \leq 12^d$. Due to (d) and H\"older's inequality for sequences we find
\begin{align}
\label{Eq14: Adapted CZ decomposition}
 |\tildg^{(k)}(x)|^p
\leq \bigg(\sum_{j \in J_{u,x}} |\tildu_{Q_j}^{(k)}| \bigg)^p
\lesssim 12^{d(p-1)}\sum_{j \in J_{u,x}} \bigg(\barint_{Q_j} |\tildu^{(k)}|\bigg)^p
\lesssim \sum_{j \in J_{u,x}} \barint_{Q_j} |\tildu^{(k)}|^p.
\end{align}
Picking again elements $z_j \in 12 \sqrt{d} Q_j \cap F$, the same argument we have used several times before, for instance in \eqref{Eq5: Adapted CZ decomposition} and \eqref{Eq6: Adapted CZ decomposition}, provides control on the right-hand side by $\alpha^p$. This is the first required estimate on $U$. For the second one involving $\dist_{D_k}$, we first observe that if $y \in Q_j$ for some $j \in J_{u,x}$, then since $x \in Q_j$ as well,
\begin{align*}
 \dist_{D_k}(y) \leq \diam(Q_j) + \dist_{D_k}(x) = d_j + \dist_{D_k}(x) \leq 2 \dist_{D_k}(x) 
\end{align*}
by the defining property of $k$-usual cubes. Combining this estimate with \eqref{Eq14: Adapted CZ decomposition}, we conclude as usual,
\begin{align*}
 \bigg|\frac{\tildg^{(k)}(x)}{\dist_{D_k}(x)}\bigg|^p
&\lesssim \sum_{j \in J_{u,x}} \barint_{Q_j} \bigg|\frac{\tildu^{(k)}(y)}{\dist_{D_k}(x)}\bigg|^p \; \d y
\leq 2^p \sum_{j \in J_{u,x}} \barint_{Q_j} \bigg|\frac{\tildu^{(k)}(y)}{\dist_{D_k}(y)}\bigg|^p \; \d y
\lesssim \alpha^p.
\end{align*}

\subsubsection*{Step 5: Proofs of \eqref{vi} and \eqref{viii}}

In order to estimate $\nabla \tildg$ we have to make sure that the gradient can be pushed through the sum defining $\tildg$. We shall prove on the way the unconditional convergence stated in \eqref{viii}. To this end, let $1 \leq k \leq m$. Also let $J_0 \subseteq J$ be a finite set. Adopting the notation from Step~4 and arguing similarly to \eqref{Eq14: Adapted CZ decomposition}, we obtain
\begin{align*}
 \bigg\| \sum_{j \in J_0} \tildb_j^{(k)} \bigg \|_{\W^{1,p}}^p
&\lesssim \sum_{j \in J_u \cap J_0} \int_{Q_j} \bigg( |\varphi_j (\tildu^{(k)} - \tildu_{Q_j}^{(k)} )|^p + |\varphi_j \nabla \tildu^{(k)}|^p + |(\tildu^{(k)} - \tildu_{Q_j}^{(k)}) \nabla \varphi_j|^p \bigg) \\
&\quad + \sum_{j \in (J_b \cup J_s) \cap J_0} \int_{Q_j} \bigg( |\varphi_j \tildu^{(k)}|^p + |\varphi_j \nabla \tildu^{(k)}|^p + |\tildu^{(k)} \nabla \varphi_j|^p \bigg). 
\intertext{Investing the estimates \eqref{Eq3: Adapted CZ decomposition}, \eqref{Eq4: Adapted CZ decomposition}, and \eqref{Eq7: Adapted CZ decomposition} on $k$-usual cubes, \eqref{Eq11: Adapted CZ decomposition} on $k$-boring cubes and in addition \eqref{Eq12: Adapted CZ decomposition} on $k$-special cubes, we find}
&\lesssim \sum_{j \in J_0} \int_{Q_j} \bigg(|\tildu^{(k)}| + |\nabla \tildu^{(k)}| + \bigg| \frac{\tildu^{(k)}}{\dist_{D_k}}\bigg| \bigg)^p \\
&= \int_{\R^d} \sum_{j \in J_0} \ind_{Q_j}(x) \bigg(|\tildu^{(k)}(x)| + |\nabla \tildu^{(k)}(x)| + \bigg| \frac{\tildu^{(k)}(x)}{\dist_{D_k}(x)}\bigg| \bigg)^p \; \d x.
\end{align*}
As a consequence of (D) the series $\sum_{j \in J_0} \ind_{Q_j}$ converges pointwise to a function bounded everywhere by $12^d$. Therefore Lebesgue's theorem implies that the partial sums of $\sum_{j \in J} \tildb_j^{(k)}$ form Cauchy sequences in $\W^{1,p}(\R^d)$. The limit is independent of the order of summation again by (D). The same applies to $\sum_{j \in J'} \tildb_j^{(k)}$ for any $J' \subseteq J$ and hence we obtain \eqref{viii}.

Revisiting the calculation above for $J_0 = J$ and recalling \eqref{Eq: Def of CE}, we find
\begin{align}
\label{Eq: CZ g estimate}
\begin{split}
 \bigg\| \sum_{j \in J} \tildb_j^{(k)} \bigg \|_{\W^{1,p}(\R^d)}^p
\lesssim \int_{\R^d} \bigg(|\tildu^{(k)}(x)| + |\nabla \tildu^{(k)}(x)| + \bigg| \frac{\tildu^{(k)}(x)}{\dist_{D_k}(x)}\bigg| \bigg)^p \; \d x
\lesssim \|u\|_{\IW^{1,p}(\Omega)}^p.
\end{split}
\end{align}
We recall from Step~2 that all $\tildb_j^{(k)}$ are contained in $\W_{D_k}^{1,p}(\R^d)$. Since the latter is a closed subspace of $\W^{1,p}(\R^d)$, it also contains $\sum_{j \in J}  \tildb_j^{(k)}$ and $\tildg^{(k)} = \tildu^{(k)} - \sum_{j \in J}  \tildb_j^{(k)}$. Restricting to $\Omega$ gives $g^{(k)} \in \W_{D_k}^{1,p}(\Omega)$, that is, $g \in \IW^{1,p}(\Omega)$. Finally, the estimate in \eqref{vi} follows directly from \eqref{Eq: CZ g estimate}

\subsubsection*{Step 6: Gradient estimate of the good function}

Let $1 \leq k \leq m$. The objective of this step is to prove $|\nabla \tildg^{(k)}(x)| \lesssim \alpha$ for almost every $x \in \R^d$. Thanks to \eqref{viii}  we can compute
\begin{align*}
 \nabla \tildg^{(k)}
= \nabla \tildu^{(k)} - \sum_{j \in J_u} (\varphi_j \nabla \tildu^{(k)} + (\tildu^{(k)} - \tildu_{Q_j}^{(k)})\nabla \varphi_j) - \sum_{j \in J_b \cup J_s} (\varphi_j \nabla \tildu^{(k)} + \tildu^{(k)} \nabla \varphi_j )
\end{align*}
and all sums converge in $\L^p(\R^d)$. As in the previous step we write
\begin{align}
\label{Eq15: Adapted CZ decomposition}
\nabla \tildg^{(k)} = \nabla \tildu^{(k)} - \nabla \tildu^{(k)} \sum_{j \in J} \varphi_j - \tildu^{(k)} \sum_{j \in J} \nabla \varphi_j  + \sum_{j \in J_u} \tildu_{Q_j}^{(k)}\nabla \varphi_j.
\end{align}
Now, on $F = \R^d \setminus U$ all terms on the right-hand side vanish but the first one and we get
\begin{align*}
|\nabla \tildg^{(k)}(x)|^p = |\nabla \tildu^{(k)}(x)|^p \leq M(|\nabla \tildu|^p)(x) \leq \alpha^p \qquad (\text{a.e.\ $x \in F$}).
\end{align*}
So, we can concentrate on the similar estimate on $U$. Due to (D) the sum $\sum_{j \in J} \varphi_j$ converges in $\Lloc^1(\R^d)$ and by construction the limit is identically $1$ on $U$. Thus, $\sum_{j \in J} \nabla \varphi_j = 0 $ on $U$ in the sense of distributions and \eqref{Eq15: Adapted CZ decomposition} collapses to
\begin{align*}
 \nabla \tildg^{(k)}(x) =  \sum_{j \in J_u} \tildu_{Q_j}^{(k)} \nabla \varphi_j(x) \qquad (x \in U).
\end{align*}
We will not estimate this sum directly. Instead, we define
\begin{align*}
 h_u(x):= \sum_{j \in J_u} \tildu_{Q_j}^{(k)} \nabla \varphi_j(x), \qquad h_{s,b}(x):= \sum_{j \in J_s \cup J_b} \tildu_{Q_j}^{(k)} \nabla \varphi_j(x) \qquad (x \in U)
\end{align*}
and aim at proving $\fabs{h_{s,b}(x)}^p \lesssim \alpha^p$ and $\fabs{h_u(x) + h_{s,b}(x)}^p \lesssim \alpha^p$ for almost every $x \in U$. This of course implies the same bound for $h_u = \nabla \tildg^{(k)}$ and the proof will be complete. 

As for $h_{s,b}(x)$, we recall from \eqref{Eq12: Adapted CZ decomposition} that $\d_{D_k}(y) \leq 2d_j$ holds for all $y$ in a $k$-special cube $Q_j$ and that by definition the diameter of a $k$-boring cube is at least $1$. With $J_{b,x}$ and $J_{s,x}$ the sets of those $j \in J$ for which $x$ is contained in the $k$-boring and $k$-special cube $Q_j$, respectively, we obtain in analogy with \eqref{Eq14: Adapted CZ decomposition} the bound
\begin{align*}
 |h_{s,b}(x)|^p
\lesssim \sum_{j \in J_{b,x}}  \frac{1}{d_j^p} |\tildu_{Q_j}^{(k)}|^p + \sum_{j \in J_{s,x}} \frac{1}{d_j^p} |\tildu_{Q_j}^{(k)}|^p
\leq \sum_{j \in J_{b,x}} \barint_{Q_j} |\tildu^{(k)}|^p + \sum_{j \in J_{s,x}} \barint_{Q_j} \bigg|\frac{\tildu^{(k)}(y)}{\dist_{D_k}(y)}\bigg|^p \; \d y.
\end{align*} 
The usual maximal operator argument together with (D) provides control by $\alpha^p$. 

Preliminary to the estimate of $h_u(x) + h_{s,b}(x)$ fix an index $j_0 \in J$ such that $x \in Q_{j_0}$. For any cube $Q_j$ that contains $x$ as well, we obtain from (E) that
\begin{align}
\label{Eq-1: Adapted CZ decomposition}
 \frac{5}{6} d_j \leq \dist(Q_j, F) \leq \dist(x,F) \leq \dist(Q_{j_0}, F) + d_{j_0} \leq 5 d_{j_0}.
\end{align}
The same estimate is true with the roles of $j$ and $j_0$ interchanged. So, with $Q_{j_0}^* := 14 \sqrt{d} Q_{j_0}$ every such cube satisfies $Q_j \subseteq Q_{j_0}^*$. Again denote by $J_x$ the set of all $j \in J$ such that $Q_j$ contains $x$. Due to $\sum_{j \in J} \nabla \varphi_j = 0 $ almost everywhere on $U$ we find
\begin{align*}
 h_u(x) + h_{s,b}(x) = \sum_{j \in J_x} \tildu_{Q_j}^{(k)}\nabla \varphi_j(x) = \sum_{j \in J_x} (\tildu_{Q_j}^{(k)} - \tildu_{Q_{j_0}^*}^{(k)}) \nabla \varphi_j(x)
\end{align*}
and thus by (d) and H\"older's inequality for sequences
\begin{align*}
 |h_u(x) + h_{s,b}(x)|^p \lesssim  12^{d(p-1)} \sum_{j \in J_x}  \frac{1}{d_j^p} |\tildu_{Q_j}^{(k)} - \tildu_{Q_{j_0}^*}^{(k)}|^p.
\end{align*}
Now, for $j \in J_x$ we have
\begin{align*}
 |\tildu_{Q_j}^{(k)} - \tildu_{Q_{j_0}^*}^{(k)}|^p 
&= \bigg| \barint_{Q_j} \tildu^{(k)}(y)  -\tildu_{Q_{j_0}^*}^{(k)} \; \d y \bigg|^p
\lesssim \barint_{Q_{j_0}^*} |\tildu^{(k)}(y) - \tildu^{(k)}_{Q_{j_0}^*}|^p \; \d y
\end{align*}
since $Q_j \subseteq Q_{j_0}^*$ and $d_{j_0} \leq 6 d_{j}$ by \eqref{Eq-1: Adapted CZ decomposition} with the roles of $j$ and $j_0$ interchanged. By means of Poincar\'{e}'s inequality \eqref{Eq4: Adapted CZ decomposition} on the cube $Q_{j_0}^*$, we finally find
\begin{align*}
|\tildu_{Q_j}^{(k)} - \tildu_{Q_{j_0}^*}^{(k)}|^p 
\lesssim \diam(Q_{j_0}^*)^p \barint_{Q_{j_0}^*} |\nabla \tildu^{(k)}|^p \lesssim d_j^p  \barint_{Q_{j_0}^*} |\nabla \tildu^{(k)}|^p,
\end{align*}
leading to
\begin{align*}
 |h_u(x) + h_{s,b}(x)|^p 
\leq  \sum_{j \in J_x}  \barint_{Q_{j_0}^*} |\nabla \tildu^{(k)}|^p.
\end{align*}
As from now, the estimate by $\alpha^p$ can be completed in the usual manner.

\subsubsection*{Step 7: Proof of \eqref{iii}}

After all it remains to check $g^{(k)} \in \W_{D_k}^{1, \infty}(\Omega)$ for $1 \leq k \leq m$ with appropriate bound. The statement of Steps~4 and 6 is
\begin{align}
\label{Eq: Good function gradient}
 \|\tildg^{(k)}\|_{\L^\infty(\R^d)} + \|\nabla \tildg^{(k)}\|_{\L^\infty(\R^d)} +  \bigg \|\frac{\tildg^{(k)}}{\dist_{D_k}} \bigg\|_{\L^\infty(\R^d)} \lesssim \alpha.
\end{align}
So, $\tildg^{(k)}: \R^d \to \IC$ is essentially bounded with essentially bounded distributional gradient. Thus, it has a Lipschitz continuous representative $\widetilde{\mathfrak{g}}^{(k)}$ with Lipschitz norm bounded by generic multiple of $\alpha$, see for example \cite[Sec.~5.8.2]{Evans} for this classical result. Finally, $\widetilde{\mathfrak{g}}^{(k)}$ vanishes everywhere on $D_k$ since every $x \in D_k$ can be approximated by a sequence along which $\widetilde{\mathfrak{g}}^{(k)}/\dist_{D_k}$ remains uniformly bounded.
\end{proof}

\begin{remark}
\label{Rem: CZ}
Assumption~\ref{D} implies that all Dirichlet parts $D_k$ have vanishing Lebesgue measure, which in turn entails $\IW^{1,\infty}(\Omega) \subseteq \IW^{1,p}(\Omega)$ for every $p \in [1,\infty)$, see \cite[Lem.~3.1]{ABHR}. Hence, the good function $g$ belongs to all Sobolev spaces $\IW^{1,p}(\Omega)$.
\end{remark}

\section{Proof of Theorem~\ref{Thm: Square root solution Lp}}
\label{Sec: Reverse estimates for the Riesz transform}

\noindent Throughout this section we assume \ref{D}, \ref{N}, and \ref{Omega}. Necessity is the easy part so let us begin with that.

\subsection{Proof of \eqref{ii Square root solution Lp}}

We borrow an idea from \cite[p.~26]{Riesz-Transforms}.
First of all, according to Theorem~\ref{Thm: Range for Lp realization} we have $(1,2) \subseteq \cJ(L)$ if $d=2$ and $[2_*,2) \subseteq \cJ(L)$ if $d \geq 3$. Henceforth, we only need to treat the case where $d \geq 3$ and $L^{1/2}$ extends to an isomorphism $\IW^{1,p} \to \L^p(\Omega)^m$ for some $p \in [1,2_*)$. We need to prove $(p,2) \subseteq \cJ(L)$.

We claim first that $L^{-1/2}$ extends to a bounded operator $\L^q(\Omega)^m \to \L^{q^*}(\Omega)$ for every $q \in [p,2]$. Indeed, by Riesz-Thorin interpolation it suffices to check the endpoints and -- keeping in mind the Sobolev embedding $\IW^{1,q}(\Omega) \subseteq \L^{q^*}(\Omega)^m$ -- we obtain the case $q=p$ from the assumption and the case $p=2$ from Theorem~\ref{Thm: Kato}.

This being said, we put $p_0 := p$, $p_j = p_{j-1}^*$ for $j = 1,\ldots$ and stop at the first number $j$ with $p_j \in (2_*,2]$.
By construction this happens for some $j\geq 1$ and the above applies to $q=p_0,\ldots,p_j$. We find for every $t>0$ the chain of bounded mappings
\begin{align*}
 \L^{p_0}(\Omega)^m 
 \xrightarrow{L^{-1/2}}\L^{p_1}(\Omega)^m
 \cdots \xrightarrow{L^{-1/2}} \cdots \L^{p_j}(\Omega)^m 
 \xrightarrow{\e^{-t/2 L}} \L^2(\Omega)^m
 \xrightarrow{L^{j/2} \e^{-t/2 L}} \L^2(\Omega)^m,
\end{align*}
where the second to last arrow with operator norm controlled by $t^{d/4 -d/(2p_j)}$ is due to Proposition~\ref{Prop: Hypercontractivity}.\eqref{i Hypercontractivity} on noting that $p_j$ is an interior point of $\cJ(L)$. The final arrow with operator norm controlled by $t^{-j/2}$ follows from the bounded $\H^\infty$-calculus on $\L^2(\Omega)^m$. But the chain above induces $\e^{-tL}$ and since $d/p = d/p_j + j$ holds by construction, we have shown its $\L^p \to \L^2$ boundedness. Proposition~\ref{Prop: Hypercontractivity} yields $(p,2) \subseteq \cJ(L)$. \hfill $\square$ \\

\vspace{-12pt}
\subsection{Proof of \eqref{i Square root solution Lp}}
We turn to \eqref{i Square root solution Lp} and claim that it suffices to prove the following key proposition. \note{In dimension $d \geq 3$, the range of admissible exponents in this proposition can exceed $\cJ(L)$ by up to one Sobolev conjugate.}

\note{
\begin{proposition}
\label{Prop: Weak type estimate square root}
Let $p_0 \in \cJ(L) \cap (1,2)$. In dimension $d \geq 3$, assume that $p \in (1,2)$ is such that $p^* \in (p_0,2]$ and in dimension $d=2$, let $p \in (p_0, 2]$. The weak-type bound
\begin{align*}
 \Big| \Big\{ x \in \Omega: |L^{1/2}u (x)| > \alpha \Big\} \Big| \lesssim \frac{1}{\alpha^p} \|u\|_{\IW^{1,p}}^p \qquad (\alpha>0, \, u \in \IW^{1,2}(\Omega))
\end{align*}
holds with an implicit constant depending on $\Jnorm{p_0}$ and geometry.
\end{proposition}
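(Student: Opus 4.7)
The plan is to combine the Sobolev-type Calder\'on--Zygmund decomposition of Lemma~\ref{Lem: Adapted CZ decomposition} at level $\alpha$ with the off-diagonal machinery of Section~\ref{Sec: Properties of the semigroup}. Write $u = g + \sum_{j \in J} b_j$ with cubes $Q_j$ of sidelength $r_j$, and set $E^* := \bigcup_j 4Q_j$. Assumption~\ref{Omega} together with property \eqref{v} of the decomposition yield $|E^* \cap \Omega| \lesssim \sum_j |Q_j| \lesssim \alpha^{-p}\|u\|_{\IW^{1,p}}^p$, so it remains to bound $|\{|L^{1/2}g| > \alpha/3\}|$ and $|\{|L^{1/2}\sum_j b_j| > \alpha/3\} \setminus E^*|$ by the same quantity.

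The good-function part is routine: property~\eqref{iii} gives $|g|, |\nabla g| \lesssim \alpha$ pointwise and property~\eqref{vi} gives $\|g\|_{\IW^{1,p}} \lesssim \|u\|_{\IW^{1,p}}$, so H\"older produces $\|g\|_{\IW^{1,2}}^2 \lesssim \alpha^{2-p}\|u\|_{\IW^{1,p}}^p$; Chebyshev together with $\|L^{1/2}g\|_2 \lesssim \|g\|_{\IW^{1,2}}$ from Theorem~\ref{Thm: Kato} closes the estimate.

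The bad-function part is the crux. Fix an integer $N > d/2$ and split, for each $j$,
\begin{align*}
L^{1/2} b_j = L^{1/2}(I - \e^{-r_j^2 L})^N b_j + \sum_{k=1}^{N}\binom{N}{k}(-1)^{k+1} L^{1/2}\e^{-kr_j^2 L} b_j.
\end{align*}
Since $p \in \cJ(L) \cap (1, 2_*)$, Proposition~\ref{Prop: Hypercontractivity} provides $\L^p \to \L^2$ off-diagonal estimates for $\{\e^{-tL}\}_{t>0}$; factoring $\sqrt{s}L^{1/2}\e^{-sL} = \bigl(\sqrt{s}L^{1/2}\e^{-sL/2}\bigr)\,\e^{-sL/2}$ and invoking Proposition~\ref{Prop: Hinfty on L2} on the first factor upgrades them to $\L^p \to \L^2$ off-diagonal estimates for $\{\sqrt{s}L^{1/2}\e^{-sL}\}_{s>0}$. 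Plugging in the CZ bounds $\|b_j\|_p, \|\nabla b_j\|_p \lesssim \alpha |Q_j|^{1/p}$ from property~\eqref{iv}, a dyadic annular summation in distance from $Q_j$ controls the $\L^2(\Omega \setminus E^*)$-norm of each semigroup term. The ``far'' piece $L^{1/2}(I - \e^{-r_j^2 L})^N b_j$ is then represented as $\int_0^\infty \psi_N(s/r_j^2)\sqrt{s}L^{1/2}\e^{-sL}b_j\,\frac{\d s}{s}$ for a suitable $\psi_N \in \H_0^\infty(\S_\psi^+)$ encoding the $N$-fold cancellation at scale $r_j^2$, and is treated by the same off-diagonal argument.

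The hard part is that Kato's identity $\|L^{1/2}u\|_2^2 = \Re \fa(u,u)$ is only available in the symmetric setting, so the gradient and Hardy information from CZ property~\eqref{iv} cannot be fed into a quadratic-form integration by parts; instead it must thread through the semigroup integral representation above, and this is where the $N$-fold cancellation in the ``far'' term becomes essential to absorb the polynomial loss $r_j^{-1}$ coming from $L^{1/2}$. The restriction $p < 2_*$ enters precisely because it guarantees $p^* \leq 2$, making the Sobolev embeddings compatible with the $\L^p \to \L^2$ off-diagonal machinery. Assembling the pieces via Chebyshev and the bounded overlap of cubes (property~\eqref{ii}) yields the desired weak-type bound with implicit constant depending only on $\Jnorm{p}$ and geometry; the outline follows \cite{ABHR}, with the $\H^\infty$-calculus factorization above substituting for the symmetric form manipulations unavailable in the non-self-adjoint setting.
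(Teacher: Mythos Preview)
Your treatment of the good function is correct and matches the paper. The bad-function argument, however, has a genuine gap: you control each $L^{1/2}\e^{-kr_j^2L}b_j$ and each $L^{1/2}(I-\e^{-r_j^2L})^Nb_j$ individually in $\L^2(\Omega\setminus E^*)$, but you do not explain how to \emph{sum over $j$}. Bounded overlap of the $Q_j$ is a pointwise statement; it does not control $\big\|\sum_j T_jb_j\big\|_2$ when the $T_j$ act at different scales $r_j$. A bare triangle inequality on your near terms produces $\sum_j \alpha r_j^{d/2-1}$, which is not dominated by $\sum_j|Q_j|$. This multi-scale summation is the heart of the matter and requires tools beyond off-diagonal decay for a single block.

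The paper splits differently --- writing $L^{1/2}b=\frac{2}{\sqrt\pi}\int_0^\infty L\e^{-t^2L}b\,\d t$ and cutting the integral at $t=r_j$ --- and handles the two ranges with two distinct devices. For $t<r_j$ it pairs against a normalised $v\in\L^2$, bounds the contribution of each $j$ pointwise on $Q_j$ by $(M(|v|^2))^{1/2}$, and then sums via Kolmogorov's inequality. For $t>r_j$ it groups cubes by dyadic scale and rewrites the sum as $\sum_{k}f(4^kL)\beta_k$ for a fixed $f\in\H_0^\infty$ and $\beta_k=\sum_{j\in J_k}2^{-k}b_j$; the sum in $k$ is then controlled by a discrete square-function estimate (Lemma~\ref{Lem: Discrete QE}) whose proof rests on the bounded $\H^\infty$-calculus of $L$ on $\L^{p^*}$ furnished by Theorem~\ref{Thm: Hinfty in Lp}. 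This last point is where $p<2_*$ is genuinely used --- not merely to make Sobolev embeddings land in $\L^2$, but to have the functional calculus on $\L^{p^*}$ available, a tool your outline never invokes. (A minor side issue: Proposition~\ref{Prop: Hypercontractivity} gives $\L^q\to\L^2$ off-diagonal estimates only for $q\in(p,2)$, not at $q=p$; the paper works at $q=p^*$.)
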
}


Let us first see how the proposition leads to the proof of the first part of Theorem~\ref{Thm: Square root solution Lp}. 

\note{In dimension $d=2$, let $p_0 \in \cJ(L) \cap (1,2)$ be arbitrary. In dimension $d \geq 3$, thanks to Theorem~\ref{Thm: Range for Lp realization}, we can guarantee that $\cJ(L) \cap (1,2_*)$ is non-empty. Let it contain $p_0$.}
	
Due to Proposition~\ref{Prop: Weak type estimate square root} and Theorem~\ref{Thm: Kato} we have at hand (extensions to) bounded operators
\begin{align*}
 L^{1/2}: \IW^{1,p_0}(\Omega) \to \L^{p_0,\infty}(\Omega)^m, \qquad L^{1/2}: \IW^{1,2}(\Omega) \to \L^2(\Omega)^m,
\end{align*}
where $\L^{p_0,\infty}$ denotes the usual weak $\L^{p_0}$-space. We refer to \cite[Sec.~1.18.6]{Triebel} for background. Now, let $p \in (p_0,2)$ and $1/p = (1-\theta)/p_0 + \theta/2$. By real interpolation this entails boundedness of
\begin{align*}
 L^{1/2}: (\IW^{1,p_0}(\Omega), \IW^{1,2}(\Omega))_{\theta, p} \to (\L^{p_0,\infty}(\Omega)^m, \L^2(\Omega)^m)_{\theta, p},
\end{align*}
see e.g.\ \cite[Sec.~1.3.3]{Triebel} for background on these notions. Up to equivalent norms the left-hand space is $\IW^{1,p}(\Omega)$, see \cite[Thm.~8.1]{ABHR}. The right-hand space is $\L^p(\Omega)^m$ up to equivalent norms, see \cite[Thm.~2, Sec.~1.18.6]{Triebel}. Thus,
\begin{align}
\label{Eq: Reverse estimate}
 \|L^{1/2} u\|_{\L^p(\Omega)^m} \lesssim \|u\|_{\IW^{1,p}(\Omega)} \qquad (u \in \IW^{1,2}(\Omega)).
\end{align}
(In \cite{ABHR} only the case $m=1$ was considered, but real interpolation interchanges with finite products of spaces by an abstract principle, see \cite[Sec.~1.17.1]{Triebel} or \cite[Cor.~1.3.8]{EigeneDiss}.) Since $p$ is an interior point of $\cJ(L)$, Corollary~\ref{Cor: Riesz estimate} provides the estimate reverse to \eqref{Eq: Reverse estimate}. This means that $L^{1/2}$ extends to a one-to-one operator $\IW^{1,p}(\Omega) \to \L^p(\Omega)^m$ with closed range. Furthermore, from Theorem~\ref{Thm: Kato} we know $\Rg(L^{1/2}) = \L^2(\Omega)^m$. Hence, this extension has dense range and therefore is an isomorphism. We have picked up implicit constants depending on $\Jnorm{p_0}$, geometry, and on $p$. The latter comes in particular from real interpolation. 

This completes the proof of Theorem~\ref{Thm: Square root solution Lp} modulo the

\begin{proof}[{Proof of Proposition~\ref{Prop: Weak type estimate square root} \note{($d \geq 3$)}}]
The argument follows the lines of \cite[Prop.~10.1]{ABHR} with two essential differences: A different Calder\'{o}n--Zygmund decomposition and the presence of the technical condition \note{$p^* \in (p_0,2]$}. The raison d'\^etre for the latter is to have at our disposal
\begin{itemize}
 \item $\L^{p^*} \to \L^2$ off-diagonal estimates for $\{\e^{-tL}\}_{t>0}$ and 
 \item $\L^{p^*}$ boundedness of the $\H^\infty(\S^+_\psi)$-calculus for $L$ of some fixed angle $\psi \in (0, \pi/2)$.
\end{itemize}
Indeed, since \note{$p^* \in (p_0,2]$} the first property is a consequence of Proposition~\ref{Prop: Hypercontractivity}.\eqref{iii Hypercontractivity} and implicit constants depend only on \note{$\Jnorm{p_0}$} and geometry, see Remark~\ref{Rem: Hypercontractivity}. Similarly, the second property is due to Theorem~\ref{Thm: Hinfty in Lp} with the same dependence of implicit constants.

To get started, let $\alpha > 0$ and $u \in \IW^{1,2}(\Omega)$. In particular we have $u \in \IW^{1,p}(\Omega)$ and within this space we decompose 
\begin{align*}
 u = g +b, \quad b = \sum_{j \in J} b_j
\end{align*}
according to Lemma~\ref{Lem: Adapted CZ decomposition}. Since $g$ is contained in $\IW^{1,2}(\Omega)$, see Remark~\ref{Rem: CZ}, the above is also a decomposition in that space. In the further course of the proof \eqref{i} - \eqref{viii} will refer to the respective features of the Calder\'{o}n--Zygmund decomposition. We then split
\begin{align}
\label{Eq: Weak type g b splitting}
\begin{split}
 \Big| \Big\{ x \in \Omega: |L^{1/2}u (x)| > \alpha \Big\} \Big|
&\leq \Big| \Big\{ x \in \Omega: |L^{1/2}g (x)| > \frac{\alpha}{2} \Big\} \Big| \\
&\quad + \Big| \Big\{ x \in \Omega: |L^{1/2}b (x)| > \frac{\alpha}{2} \Big\} \Big|.
\end{split}
\end{align}

\subsubsection*{Step 1: Estimate of the good part}

The good function produces an easy-to-handle term. By H\"older's inequality, \eqref{iii}, and \eqref{vi} (or \eqref{Eq: Good function gradient} as it were), we have
\begin{align*}
 \|g\|_{\IW^{1,2}}^2 \lesssim \alpha^{2-p} \|g\|_{\IW^{1,p}}^p \lesssim \alpha^{2-p} \|u\|_{\IW^{1,p}}^p
\end{align*}
and the desired bound follows from Tchebychev's inequality and Theorem~\ref{Thm: Kato}:
\begin{align*}
 \Big| \Big\{ x \in \Omega: |L^{1/2}g (x)| > \frac{\alpha}{2} \Big\} \Big|
\leq \frac{4}{\alpha^2} \|L^{1/2} g\|_2^2 \lesssim \frac{1}{\alpha^2} \|g\|_{\IW^{1,2}}^2 \lesssim \frac{1}{\alpha^p}\|u\|_{\IW^{1,p}}^p. 
\end{align*}

\subsubsection*{Step 2: Further decomposition of the bad part}

We turn to the second term on the right-hand side of \eqref{Eq: Weak type g b splitting}. 
We start out with the formula,
\begin{align*}
 L^{1/2} b = \frac{2}{\sqrt{\pi}} \int_0^\infty L \e^{-t^2 L}b \; \d t,
\end{align*}
which is a direct consequence of \eqref{Eq: Resolution of the identity}. Hence,
\begin{align*}
\Big| \Big\{ x \in \Omega: |L^{1/2}b (x)| > \frac{\alpha}{2} \Big\} \Big|
&= \bigg| \bigg\{ x \in \Omega: \liminf_{n \to \infty} \bigg|\frac{2}{\sqrt{\pi}} \int_{2^{-n}}^\infty L \e^{-t^2 L}b(x) \; \d t\bigg| > \frac{\alpha}{2} \bigg\} \bigg| \\
&\leq \liminf_{n \to \infty} \bigg| \bigg\{ x \in \Omega: \bigg|\frac{2}{\sqrt{\pi}} \int_{2^{-n}}^\infty L \e^{-t^2 L}b(x) \; \d t\bigg| > \frac{\alpha}{2} \bigg\} \bigg|.
\end{align*}
Denote the sidelength of $Q_j$ by $\ell_j$ and write $r_j= 2^\ell$ for the unique value $\ell \in \IZ$ that satisfies $2^\ell \leq \ell_j < 2^{\ell+1}$. Then, we split the integral for every $n \in \IN$ as
\begin{align}
\label{Eq: Weak type m splitting}
\begin{split}
\bigg| \bigg\{ &x \in \Omega: \bigg|\frac{2}{\sqrt{\pi}} \int_{2^{-n}}^\infty L \e^{-t^2 L}b(x) \; \d t\bigg| > \frac{\alpha}{2} \bigg\} \bigg| \\
&\leq  \bigg| \bigg\{x \in \Omega: \bigg| \sum_{j \in J} \int_{2^{-n}}^{r_j \vee 2^{-n}} L \e^{-t^2 L}b_j(x) \; \d t\bigg| > \frac{\sqrt{\pi} \alpha}{8} \bigg\} \bigg|\\
&+ \bigg| \bigg\{x \in \Omega: \bigg| \sum_{j \in J} \int_{r_j \vee 2^{-n}}^\infty L \e^{-t^2 L}b_j(x) \; \d t\bigg| > \frac{\sqrt{\pi} \alpha}{8} \bigg\} \bigg|,
\end{split}
\end{align}
where sum and integral could be interchanged since on the one hand the sum over the $b_j$ converges in $\L^{p^*}$ due to \eqref{viii} and Sobolev embeddings (making use of the extension operators as usual) and on the other hand $L \e^{-t^2 L}$ is bounded on $\L^{p^*}$ with norm under control by $t^{-2}$ by the bounded $\H^\infty$-calculus. 
We also note that the $b_j$ are in $\L^2$, see \eqref{vii}.

\subsubsection*{Step 3: Estimate of the first term on the right of \eqref{Eq: Weak type m splitting}}

Of course we may assume $r_j > 2^{-n}$. From Tchebychev's inequality we can infer
\begin{align}
\label{Eq: Weak type start of step 3}
\begin{split}
 \bigg| \bigg\{ &x \in \Omega :  \bigg| \sum_{j \in J} \int_{2^{-n}}^{r_j} L \e^{-t^2 L}b_j(x) \; \d t\bigg| > \frac{\sqrt{\pi} \alpha}{8} \bigg\} \bigg| \\
&\leq \bigg|\bigcup_{j \in J} 4 Q_j\bigg| + \frac{64}{\pi \alpha^2} \bigg\| \ind_{\Omega \setminus \cup_{j \in J} 4 Q_j} \sum_{j \in J} \int_{2^{-n}}^{r_j} L \e^{-t^2 L}b_j \; \d t \bigg \|_2^2.
\end{split}
\end{align}
The union of the cubes $4Q_j$ does not cause any problems since its measure can be controlled by $ \|u\|_{\IW^{1,p}}^p/\alpha^p$, see \eqref{v}. We start to estimate the leftover $\L^2$ norm by testing against $v \in \L^2(\Omega)^m$ with $\|v\|_2 = 1$:
\begin{align}
\label{Eq: Weak type duality}
\begin{split}
 \bigg|\int_\Omega & \cl{v} \ind_{\Omega \setminus \cup_{j \in J} 4 Q_j} \bigg(\sum_{j \in J} \int_{2^{-n}}^{r_j} L \e^{-t^2 L}b_j \; \d t\bigg) \; \d x \bigg| \\
&\leq \sum_{j \in J} \int_{\Omega \setminus 4Q_j} |v| \bigg| \int_{2^{-n}}^{r_j} L \e^{-t^2 L}b_j \; \d t\bigg| \; \d x.
\end{split}
\end{align}
For fixed $j$ we split $\Omega \setminus 4Q_j$ into annuli $C_k(Q_j) \cap \Omega$, where $C_k(Q_j) = 2^{k+1}Q_j \setminus 2^k Q_j$, and apply the Cauchy-Schwarz inequality to give
\begin{align}
\label{Eq: Weak type duality one cube}
\begin{split}
 \int_{\Omega \setminus 4Q_j} |v| \bigg| &\int_{2^{-n}}^{r_j} L \e^{-t^2 L}b_j \; \d t\bigg| \; \d x \\
 &\leq \sum_{k = 2}^\infty \|v\|_{\L^2(C_k(Q_j) \cap \Omega)} \bigg\|\int_{2^{-n}}^{r_j} L \e^{-t^2 L}b_j \; \d t\bigg\|_{\L^2(C_k(Q_j) \cap \Omega)}.
\end{split}
\end{align}
Identifying $v$ with its extension by zero to $\R^d$, we obtain for every $y \in Q_j$ that
\begin{align*}
 \|v\|_{\L^2(C_k(Q_j) \cap \Omega)}^2 
\lesssim 2^{dk} \ell_j^d  M(|v|^2)(y).
\end{align*}
To control the other norm on the right-hand side of \eqref{Eq: Weak type duality one cube} we recall that as far as off-diagonal estimates are concerned, we have $\L^{p^*} \to \L^2$ for $\{\e^{-t L}\}_{t>0}$.  Since we also have $\L^2 \to \L^2$ for $\{tL \e^{-t L}\}_{t>0}$ from Proposition~\ref{Prop: OD estimates for semigroup on L2}, we obtain $\L^{p^*} \to \L^2$ for $\{tL \e^{-t L}\}_{t>0}$ by composition as in the proof of Proposition~\ref{Prop: Hypercontractivity}.\eqref{iv Hypercontractivity}. All implied results contain a statement about implicit constants and so we may note without any pain that
\begin{align*}
 \|t^2 L \e^{-t^2 L}b_j \|_{\L^2(C_k(Q_j) \cap \Omega)} \leq C t^{d/2 - d/p^*} \e^{-c 4^{k-1} r_j^2 / t^2}\|b_j\|_{\L^{p^*}(\Omega)},
\end{align*}
where $C,c \in (0,\infty)$ depend on \note{$\Jnorm{p_0}$} and geometry, and we have used that $b_j$ is supported in $Q_j$, see \eqref{ii}. From Sobolev embeddings and \eqref{iv} we can infer $\|b_j\|_{p^*} \lesssim \alpha \ell_j^{d/p}$ so that altogether
\begin{align*}
  \bigg\|\int_{2^{-n}}^{r_j} L \e^{-t^2 L}b_j \; \d t\bigg\|_{\L^2(C_k(Q_j) \cap \Omega)}
&\leq \int_{2^{-n}}^{r_j} \|L \e^{-t^2 L}b_j \|_{\L^2(C_k(Q_j)\cap \Omega)} \; \d t\\
&\lesssim  \alpha \ell_j^{d/p} \int_0^{r_j} t^{d/2 - d/p^* - 2} \e^{-c4^{k-1} r_j^2/t^2} \; \d t\\
& = \frac{1}{2} \alpha \ell_j^{\frac{d}{p}} (4^k r_j^2 )^{\frac{d}{4} - \frac{d}{2p^*} - \frac{1}{2}} \int_{4^k}^\infty s^{-\frac{d}{4} + \frac{d}{2p^*} - \frac{1}{2}} \e^{-\frac{cs}{4}} \; \d s,
\intertext{the last step being due to a change of variables $s= 4^{k} r_j^2 / t^2$. Abbreviating $\gamma = -d/2 + d/{p^*} + 1 > 0$ and using $2r_j \geq \ell_j$, we obtain}
&\leq \frac{1}{2} \alpha \ell_j^{d/2} 2^{-(k-1)\gamma} \int_{4^k}^\infty s^{\gamma/2 -1} \e^{-cs/4} \; \d s \\
&\leq \frac{1}{2} \alpha \ell_j^{d/2} 2^{-(k-1)\gamma} \e^{-c4^{k-3}} \int_0^\infty s^{\gamma/2 -1} \e^{-cs/8} \; \d s.
\end{align*}
The integral in $s$ is finite. Coming back to \eqref{Eq: Weak type duality one cube}, so far we have for every $y \in Q_j$ that
\begin{align*}
\int_{\Omega \setminus 4Q_j} |v| \bigg| \int_{2^{-n}}^{r_j} L \e^{-t^2 L}b_j \; \d t\bigg| \; \d x
\lesssim \alpha \ell_j^d \Big(M(|v|^2)(y)\Big)^{1/2} \sum_{k=2}^\infty 2^{(d/2-\gamma)k} \e^{-c4^{k-3}},
\end{align*}
where the sum over $k$ is convergent. So, we can average with respect to $y$ to give
\begin{align*}
\int_{\Omega \setminus 4Q_j} |v| \bigg| \int_{2^{-n}}^{r_j} L \e^{-t^2 L}b_j \; \d t\bigg| \; \d x
\lesssim \alpha \int_{Q_j} \Big(M(|v|^2)(y)\Big)^{1/2} \; \d y.
\end{align*}
Now, we re-insert this estimate on the right-hand of our starting point \eqref{Eq: Weak type duality}. Invoking the finite overlap property \eqref{ii} of the $Q_j$, we obtain
\begin{align*}
 \bigg|\int_\Omega v \ind_{\cup_{j \in J} 4 Q_j} \bigg(\sum_{j \in J} \int_{2^{-n}}^{r_j} L \e^{-t^2 L}b_j \; \d t\bigg) \; \d x \bigg|
&\lesssim \alpha \int_{\bigcup_{j \in J} Q_j} \Big(M(|v|^2)(y)\Big)^{1/2} \; \d y.
\end{align*}
From Kolmogorov's inequality \cite[Lem.~5.16]{Duo}, \eqref{v}, and the normalization of $v$ we can infer 
\begin{align*}
\int_{\bigcup_{j \in J} Q_j} \Big(M(|v|^2)(y)\Big)^{1/2} \; \d y 
\lesssim \bigg|\bigcup_{j \in J} Q_j\bigg|^{1/2} \||v|^2\|_1^{1/2}
\leq \bigg( \sum_{j \in J} |Q_j| \bigg)^{1/2} \|v\|_2
\lesssim \frac{1}{\alpha^{p/2}} \|u\|_{\IW^{1,p}}^{p/2}.
\end{align*}
Going all the way back to the start, we have also bound the $\L^2$ norm occurring in \eqref{Eq: Weak type start of step 3} by $\alpha^{2-p} \|u\|_{\IW^{1,p}}^p$ and therefore completed Step~3.

\subsubsection*{Step 4: Estimate of the second term on the right of \eqref{Eq: Weak type m splitting}}

In preparation of the estimate, we define 
\begin{align*}
 f(z) = \int_1^\infty z \e^{-t^2 z} \; \d t \qquad (\Re z > 0)
\end{align*}
and note $f \in \H_0^\infty(\S^+_\psi)$ for any angle $\psi \in (0, \frac{\pi}{2})$, see also \cite[p.~198]{ABHR}. We have bounded operators $f(r^2L)$ on $\L^2$ for $r>0$, which extend boundedly to $\L^{p^*}$ as we have noted right at the start. By the very definition of the functional calculus and Fubini's theorem we can write more conveniently
\begin{align*}
 f(r^2 L) = \int_1^\infty r^2 L \e^{-t^2 r^2 L} \; \d t = r \int_r^\infty L \e^{-t^2 L} \; \d t.
\end{align*}
Introducing $J_k:= \{j \in J: r_j \vee 2^{-n} = 2^k\}$ for $k \in \IZ$, we therefore have
\begin{align*}
 \sum_{j \in J} \int_{r_j \vee 2^{-n}}^\infty L \e^{-t^2 L}b_j \; \d t
= \sum_{k \in \IZ} \sum_{j \in J_k} \int_{2^k}^\infty L \e^{-t^2 L}b_j \; \d t
= \sum_{k \in \IZ} \sum_{j \in J_k} \frac{1}{2^k} f(4^k L) b_j.
\end{align*}
We start the actual estimate with Tchebychev's inequality 
\begin{align*}
 \bigg| \bigg\{&x \in \Omega: \bigg| \sum_{j \in J} \int_{r_j \vee 2^{-n}}^\infty L \e^{-t^2 L}b_j(x) \; \d t\bigg| > \frac{\sqrt{\pi} \alpha}{8} \bigg\} \bigg| \\
&\leq \frac{8^{p^*}}{\pi^{p^*/2} \alpha^{p^*}} \bigg \|\sum_{k \in \IZ} \sum_{j \in J_k} \frac{1}{2^k} f(4^k L) b_j \bigg\|_{p^*}^{p^*}.
\intertext{Since $\sum_{j \in J_k} b_j$ converges in $\IW^{1,p}(\Omega)$ by \eqref{viii} and hence in $\L^{p^*}$, we may write}
&= \frac{8^{p^*}}{\pi^{p^*/2} \alpha^{p^*}} \bigg \|\sum_{k \in \IZ} f(4^k L) \bigg(\sum_{j \in J_k} 2^{-k} b_j \bigg) \bigg\|_{p^*}^{p^*}
\intertext{and obtain from Lemma~\ref{Lem: Discrete QE} below the bound}
&\lesssim \frac{1}{\alpha^{p^*}} \bigg\|\bigg(\sum_{k \in \IZ} \Big|\sum_{j \in J_k} 2^{-k} b_j\Big|^2 \bigg)^{1/2} \bigg\|_{p^*}^{p^*}
= \frac{1}{\alpha^{p^*}} \int_\Omega \bigg(\sum_{k \in \IZ} \Big|\sum_{j \in J_k} 2^{-k} b_j(x) \Big|^2 \bigg)^{p^*/2} \; \d x.
\intertext{Here, implicit constants depend only on $p^*$ and a bound for the $\H^\infty$-calculus for $L$ on $\L^{p^*}$ (of some angle $\psi \in (0, \pi/2)$). We have seen that the latter can be given in terms of \note{$\Jnorm{p_0}$} and geometry. Due to $p^*/2 \leq 1$ we can continue by}
& \leq \frac{1}{\alpha^{p^*}} \int_\Omega \sum_{k \in \IZ} \Big(\sum_{j \in J_k} |2^{-k} b_j(x)| \Big)^{p^*} \; \d x.
\intertext{As a consequence of \eqref{ii} the sum in $j$ contains for fixed $x$ at most $12^d$ non-zero terms. Thus, we can replace the inner $\ell^1$-norm by an $\ell^{p^*}$-norm at the expense of a constant depending on $p$ and $d$ in order to give}
&\lesssim \frac{1}{\alpha^{p^*}} \int_\Omega \sum_{k \in \IZ} \sum_{j \in J_k} 2^{-k p^*} |b_j(x)|^{p^*} \; \d x
\leq \frac{2^{p^*}}{\alpha^{p^*}} \sum_{j \in J} \ell_j^{-p^*} \int_\Omega  |b_j(x)|^{p^*} \; \d x,
\intertext{where we have used $\frac{1}{2} \ell_j \leq r_j \leq 2^k$ for $j \in J_k$. Finally, by Sobolev embeddings, \eqref{iv}, and \eqref{v} we deduce}
&\lesssim \frac{1}{\alpha^{p^*}} \sum_{j \in J} \ell_j^{-p^*} \|b_j\|_{\IW^{1,p}}^{p^*}
\lesssim \sum_{j \in J} \ell_j^{-p^*} |Q_j|^{p^*/p} = \sum_{j \in J} |Q_j|
\lesssim \frac{1}{\alpha^p} \|u\|_{\IW^{1,p}}^p.
\end{align*}
This completes the proof of the proposition modulo Lemma~\ref{Lem: Discrete QE}, which we prove below.
\end{proof}

\note{
\begin{proof}[{Proof of Proposition~\ref{Prop: Weak type estimate square root} \note{($d = 2$)}}]
In this case the range $\cJ(L) \cap (1,2) = (1,2)$ is already the largest possible (Theorem~\ref{Thm: Range for Lp realization}) and hence there is no Sobolev conjugate to gain. Consequently, we repeat the proof above word by word, using the exponent $p$ instead of $p^*$. This means that we work with the functional calculus in $\L^p$ and instead of $\|b_j\|_{p^*} \lesssim \alpha \ell_j^{d/p}$ we rely on the bound
\begin{align*}
	\|b_j\|_{p} \leq \|\tildb_j\|_{p} \lesssim \ell_j \|\nabla \tildb_j\|_{p} \lesssim \alpha \ell_j^{1+d/p}.
\end{align*}
We have used the construction of $b_j$, then $\supp(\tildb_j) \subseteq Q_j^\circ$ and Poincaré's inequality, and finally \eqref{Eq6: Adapted CZ decomposition} and \eqref{Eq10: Adapted CZ decomposition}.
\end{proof}
}

Let us remark that for $\Omega = \R^d$ the following lemma was obtained in \cite[Lem.~4.14]{Riesz-Transforms} for $p<2$ by duality and a weak type criterion for $p>2$, which we do not have at our disposal. Later, in \cite{ABHR} it was proved for $p \in (1,\infty)$ through profound $\mathcal{R}$-boundedness techniques for the functional calculi that make it hard to track implicit constants. Here, we present a more elementary approach.

\begin{lemma}
\label{Lem: Discrete QE}
Let $p \in (1,\infty)$, $\Xi \subseteq \R^d$ be a measurable set, and $n \in \IN$. Let $T$ be a one-to-one sectorial operator on $\L^2(\Xi)^n$ such that for some $\psi \in (0,\pi)$ it holds
\begin{align*}
 \|f(T)u\|_p \leq C_\psi \|f\|_\infty \|u\|_p \qquad (f \in \H^\infty(\S^+_\psi), \, u \in \L^2(\Xi)^n \cap \L^p(\Xi)^n).
\end{align*}
Let $f \in \H_0^\infty(\S^+_\psi)$. Then there is a constant $C \in (0,\infty)$ that depends on $f, \psi$, such that 
\begin{align*}
 \bigg\|\sum_{k \in \IZ} f(4^k T) u_k \bigg\|_p \leq C C_\psi \bigg\| \bigg(\sum_{k \in \IZ} |u_k|^2 \bigg)^{1/2} \bigg\|_p
\end{align*}
for every sequence $\{u_k \}_{k \in \IZ} \subseteq \L^2(\Xi)^n \cap \L^p(\Xi)^n$ for which the right-hand side is finite.
\end{lemma}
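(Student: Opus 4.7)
The plan is a Rademacher randomisation argument that reduces the claim to a single application of the bounded $\H^\infty$-calculus hypothesis combined with the scalar Khintchine inequality. Throughout, let $(\eps_k)_{k \in \IZ}$ denote independent Rademacher variables on a probability space $(\Omega', \mathbb{P})$.

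First I would exploit that $f \in \H_0^\infty(\S_\psi^+)$ yields constants $c, s > 0$ with $|f(z)| \le c \min\{|z|^s, |z|^{-s}\}$. A dyadic splitting of $\sum_{k \in \IZ} |f(4^k z)|$ at $k \approx -\log_4 |z|$ produces two geometric series and hence a uniform bound on $\S_\psi^+$ independent of $z$. Consequently the random multiplier
\begin{align*}
F_\omega(z) := \sum_{k \in \IZ} \eps_k(\omega) f(4^k z)
\end{align*}
lies in $\H^\infty(\S_\psi^+)$ with $\|F_\omega\|_\infty \le C_f$ uniformly in $\omega$, and the hypothesis on $T$ then gives $\|F_\omega(T) v\|_p \le C_\psi C_f \|v\|_p$ for every $v \in \L^2(\Xi)^n \cap \L^p(\Xi)^n$, again uniformly in $\omega$.

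Second, for a finitely supported family $(u_j)$ the orthogonality $\mathbb{E}[\eps_j \eps_k] = \delta_{jk}$ combined with the convergence lemma for the $\H_0^\infty$-calculus recalled in Section~\ref{Subsec: Functional calculus} (which identifies $F_\omega(T) u$ on $\L^2$ with the pointwise sum $\sum_k \eps_k(\omega) f(4^k T) u$) will deliver the reproducing identity
\begin{align*}
\sum_j f(4^j T) u_j = \mathbb{E}\Big[ F_\omega(T) \sum_j \eps_j(\omega) u_j \Big].
\end{align*}
Taking $\L^p$ norms, invoking Jensen's inequality on $\Omega'$ and the uniform bound from the previous step, I obtain
\begin{align*}
\Big\| \sum_j f(4^j T) u_j \Big\|_p \le C_\psi C_f \, \mathbb{E}\Big\| \sum_j \eps_j u_j \Big\|_p.
\end{align*}
A Fubini-style application of the classical scalar Khintchine inequality pointwise in $x \in \Xi$ then converts the right-hand side into a constant (depending only on $p$) times $\|(\sum_j |u_j|^2)^{1/2}\|_p$, which is the asserted bound for finite sums.

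Finally, for a general sequence whose square function is in $\L^p$, I would apply the finite-sum estimate to truncation differences to show that the partial sums form a Cauchy sequence in $\L^p$, pass to the limit, and transfer the bound. The only step I expect to require some care is the identification of $F_\omega(T)$ with the pointwise-in-$\omega$ sum $\sum_k \eps_k(\omega) f(4^k T)$ on $\L^2$; but this is exactly what the convergence lemma provides, since the truncations of the Rademacher series are uniformly bounded $\H^\infty$-functions on $\S_\psi^+$ converging pointwise to $F_\omega$. Everything else is a bookkeeping exercise and I anticipate no serious obstacle.
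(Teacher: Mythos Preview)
Your approach is correct and in fact slightly more direct than the paper's. Both arguments rest on the same key observation that $\sup_{z \in \S_\psi^+} \sum_{k \in \IZ} |f(4^k z)| < \infty$ for $f \in \H_0^\infty(\S_\psi^+)$, and both feed this into the $\H^\infty$-calculus hypothesis via Rademacher randomisation. The difference is in how the randomisation is organised. The paper first dualises: it reduces the claim to the square-function estimate $\big\|\big(\sum_k |f^*(4^k T^*) v|^2\big)^{1/2}\big\|_q \lesssim \|v\|_q$ for the adjoint $T^*$ on $\L^q$, and then proves the latter by writing the square function as an $\L^2$-average of Rademacher sums, invoking Kahane's inequality to pass to an $\L^1$-average, and applying the $\H^\infty$-bound for $T^*$ to the multiplier $\sum_k r_k f^*(4^k z)$. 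You stay on the primal side and use the reproducing identity $\sum_j f(4^j T) u_j = \mathbb{E}\big[F_\omega(T) \sum_j \eps_j u_j\big]$ directly, which avoids the duality step altogether.

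Your route is marginally cleaner. One small price is the constant: the paper's Kahane step (comparing $\L^1$ and $\L^2$ Rademacher averages) uses an absolute constant, so the final $C$ depends only on $f$ and $\psi$ as stated in the lemma, whereas your pointwise Khintchine step introduces a $p$-dependence into $C$. This is harmless for the application in Step~4 of Proposition~\ref{Prop: Weak type estimate square root}, where a $p^*$-dependence is allowed anyway. A second minor point: your ``scalar Khintchine'' is applied to the $\IC^n$-valued vectors $u_j(x)$, but Khintchine in a finite-dimensional Hilbert space is equally standard, so no issue there.
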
 

\begin{proof}
The adjoint $T^*$ has the same properties as $T$ with $p$ replaced by its H\"older conjugate $q$. This follows from the identity $g(T)^* = g^*(T^*)$, where $g^*(z) = \cl{g(\cl{z})}$ and $g \in \H^\infty(\S^+_\psi)$. Arguing by duality, it suffices to show 
\begin{align}
\label{Eq1: Discrete QE}
 \bigg\|\bigg(\sum_{k \in \IZ} |f^*(4^k T^*) v|^2 \bigg)^{\frac{1}{2}} \bigg\|_q \leq C C_\psi \|v\|_q \qquad (v \in \L^2(\Xi)^n \cap \L^q(\Xi)^n).
\end{align}
Let $\{r_k\}_{k \in \IZ}$ be a sequence of symmetric independent $\{-1,1\}$-valued random variables on the unit interval and let $N \in \IN$. By orthogonality of the $r_k$ in $\L^2(0,1)$ we have
\begin{align*}
  \bigg\|\bigg(\sum_{k = -N}^N |f^*(4^k T^*) v|^2 \bigg)^{\frac{1}{2}} \bigg\|_q^q
&= \int_\Xi \bigg(\int_0^1 \Big|\sum_{k = -N}^N r_k(s) f^*(4^k T^*) v(x) \Big|^2 \; \d s \bigg)^{\frac{q}{2}} \; \d x.
\intertext{Kahane's inequality \cite[Thm.~11.1]{DJT} allows us to replace the $\L^2$ norm in $s$ by an $\L^1$ norm at the expense of an absolute constant $C \in [1,\infty)$. Then we can apply Jensen's inequality to give}
&\leq  C^q \int_\Xi \int_0^1 \Big|\sum_{k = -N}^N r_k(s) f^*(4^k T^*) v(x) \Big|^q \; \d s \; \d x \\
&= C^q \int_0^1 \bigg\|\sum_{k = -N}^N r_k(s) f^*(4^k T^*) v \bigg\|_q^q \; \d s \\
&\leq C^q \sup_{|a_k| \leq 1} \bigg\|\sum_{k = -N}^N a_k f^*(4^k T^*) v \bigg\|_q^q,
\end{align*}
where the $a_k$ are complex numbers. The bounded $\H^\infty(\S^+_\psi)$-calculus for $T^*$ yields
\begin{align*}
\bigg\|\bigg(\sum_{k = -N}^N |f^*(4^k T^*) v|^2 \bigg)^{\frac{1}{2}} \bigg\|_q
&\leq C C_\psi \|v\|_q \bigg(\sup_{z \in \S^+_\psi} \sum_{k=-\infty}^\infty |f^*(4^k z)|\bigg).
\end{align*}
In order to see that this estimate implies \eqref{Eq1: Discrete QE}, let us recall that by assumption there exist $C', s > 0$ such that $|f^*(z)| \leq C' \min\{|z|^s, |z|^{-s}\}$ for all $z \in \S^+_\psi$. Hence, given $z$, we choose $k_0 \in \IZ$ such that $4^{k_0} \leq |z| < 4^{k_0 + 1}$ and obtain
\begin{align*}
 \sum_{k=-\infty}^\infty |f^*(4^k z)|
&\leq C' \sum_{k=-\infty}^\infty \min\{|4^k z|^s, |4^k z|^{-s} \} \\
&\leq C' 4^s \sum_{k=-\infty}^\infty \min\{4^{(k+k_0)s}, 4^{-(k+k_0)s} \}
\leq \frac{2 C' 4^{2s} }{4^s-1}
\end{align*}
by a computation of the geometric series.
\end{proof}
\section{Proof of Theorem~\ref{Thm: Extended square root solution Lp}}
\label{Sec: Extended square root solution Lp}

To begin with, let us recall from Section~\ref{Subsec: L} that $L$ is the maximal restriction to $\L^2(\Omega)^m$ of the isomorphism $\Lop: \IW^{1,2}(\Omega) \to \IW^{-1,2}(\Omega)$.

\begin{lemma}
\label{Lem: Duality formula}
For $u \in \dom(L^{1/2})$ and $v \in \L^2(\Omega)^m$ the duality formula
\begin{align*}
 \scal{L^{1/2}u}{v} = \dscal{\Lop u}{(L^*)^{-1/2}v}.
\end{align*}
\end{lemma}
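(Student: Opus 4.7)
The plan is to show that both sides of the identity define continuous linear functionals of $u$ on $\IW^{1,2}(\Omega)$, verify the formula on the dense subspace $\dom(L)$, and conclude by continuity. First, by Theorem~\ref{Thm: Kato} applied to $L$ and to $L^*$ we have $\dom(L^{1/2}) = \dom((L^*)^{1/2}) = \IW^{1,2}(\Omega)$ with graph norms equivalent to the $\IW^{1,2}$-norm. In particular $(L^*)^{-1/2}v$ lies in $\Rg((L^*)^{-1/2}) = \dom((L^*)^{1/2}) = \IW^{1,2}(\Omega)$, so the right-hand side is well-defined. Continuity in $u \in \IW^{1,2}(\Omega)$ is then clear for both sides: the right-hand side because $\Lop : \IW^{1,2}(\Omega) \to \IW^{-1,2}(\Omega)$ is bounded, and the left-hand side because the norm equivalence converts graph-norm continuity into $\IW^{1,2}$-continuity.

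Since $\dom(L)$ is a core for $L^{1/2}$ (Section~\ref{Subsec: square root L}), it is dense in $\IW^{1,2}(\Omega)$ for the topology just discussed, and it suffices to verify the identity for $u \in \dom(L)$. For such $u$, the very definition of $\Lop$ through $\fa$ combined with $L$ being the maximal $\L^2$-restriction of $\Lop$ yields
\[
\dscal{\Lop u}{(L^*)^{-1/2}v} = \fa(u, (L^*)^{-1/2}v) = \scal{Lu}{(L^*)^{-1/2}v}.
\]
Writing $Lu = L^{1/2}(L^{1/2}u)$, which is licit because $u \in \dom(L)$ forces $L^{1/2}u \in \dom(L^{1/2})$, and using $(L^{1/2})^* = (L^*)^{1/2}$ (Section~\ref{Subsec: square root L}) together with $(L^*)^{-1/2}v \in \dom((L^*)^{1/2}) = \dom((L^{1/2})^*)$, the adjoint relation lets me move $L^{1/2}$ across the inner product:
\[
\scal{L^{1/2}(L^{1/2}u)}{(L^*)^{-1/2}v} = \scal{L^{1/2}u}{(L^*)^{1/2}(L^*)^{-1/2}v} = \scal{L^{1/2}u}{v},
\]
which is the required identity on $\dom(L)$.

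The only genuine subtlety is the initial reduction: without Kato's identification $\dom(L^{1/2}) = \IW^{1,2}(\Omega)$, the pairing $\dscal{\Lop u}{(L^*)^{-1/2}v}$ would not even be defined for general $u \in \dom(L^{1/2})$, and the density-extension step would collapse. Once Theorem~\ref{Thm: Kato} (for both $L$ and $L^*$) is in hand, the computation is just the factorisation $L = L^{1/2} L^{1/2}$ on $\dom(L)$ together with the standard adjoint identity for square roots of maximal accretive operators.
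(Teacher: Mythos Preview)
Your proof is correct and follows essentially the same approach as the paper: reduce to $u \in \dom(L)$ by the core property and continuity of both sides on $\IW^{1,2}(\Omega)$, then use $\Lop u = Lu$ together with the adjoint identity $(L^{1/2})^* = (L^*)^{1/2}$. The paper's version is more compressed but the logical skeleton is identical; your additional remark that Theorem~\ref{Thm: Kato} for $L^*$ is what makes the pairing $\dscal{\Lop u}{(L^*)^{-1/2}v}$ well-defined is a valid point left implicit in the paper.
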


\begin{proof}
It suffices to take $u$ in $\dom(L)$ since the latter is a core for $\dom(L^{1/2}) = \IW^{1,2}(\Omega)$ and $\Lop: \dom(L^{1/2}) \to \IW^{-1,2}(\Omega)$ is bounded, see Section~\ref{Subsec: square root L}. In this case $\Lop u = Lu \in \L^2(\Omega)^m$ and the claim follows from the duality formula $(L^{1/2})^* = (L^*)^{1/2}$ for the functional calculi, see again Section~\ref{Subsec: Functional calculus}.
\end{proof}

The subsequent proof was inspired by that of \cite[Thm.~6.5]{Disser-terElst-Rehberg}.

\begin{proof}[Proof of Theorem~\ref{Thm: Extended square root solution Lp}]
We pick $\eps(L) > 0$ depending on ellipticity, dimensions, and geometry, such that for $p \in (2, 2 + \eps(L))$,
\begin{enumerate}
\def\theenumi{\roman{enumi}}
 \item \label{Extended square root i}$\Lop$ restricts to an isomorphism $\IW^{1,p}(\Omega) \to \IW^{-1,p}(\Omega)$ and
 \item \label{Extended square root ii} $(L^*)^{1/2}$ extends to an isomorphism $\IW^{1,p'}(\Omega) \to \L^{p'}(\Omega)^m$.
\end{enumerate}
The former is rendered possible by Proposition~\ref{Prop: Domain is W1p}, the latter by Theorem~\ref{Thm: Square root solution Lp} and Theorem~\ref{Thm: Range for Lp realization} for $L^*$. Upper and lower bounds of these isomorphisms import at most an additional dependence on $p$.

Now, let $u \in \IW^{1,p}(\Omega)$ and $v \in \L^2(\Omega)^m$. By Lemma~\ref{Lem: Duality formula}, \eqref{Extended square root i}, and \eqref{Extended square root ii} we have
\begin{align*}
 |\scal{L^{1/2}u}{v}| \leq \|\Lop u\|_{\IW^{-1,p}} \|(L^*)^{-1/2}v\|_{\IW^{1,p'}} \lesssim \|u\|_{\IW^{1,p}} \|v\|_{p'}.
\end{align*}
Thus, $L^{1/2}u \in \L^p(\Omega)^m$ and $L^{1/2}$ restricts to a bounded operator $\IW^{1,p}(\Omega) \to \L^p(\Omega)^m$. As a restriction of an invertible operator it is already one-to-one. To show that it is also onto, let $f \in \L^p(\Omega)$. There exists $u \in \IW^{1,2}(\Omega)$ such that $L^{1/2}u=f$. Given an arbitrary $w \in \IW^{1,2}(\Omega)$, we apply Lemma~\ref{Lem: Duality formula} with $v = (L^*)^{1/2}w$ and use \eqref{Extended square root i}, \eqref{Extended square root ii} to give
\begin{align*}
 |\dscal{\Lop u}{w}| \leq \|L^{1/2}u\|_p \|(L^*)^{1/2}w\|_{p'} \lesssim \|L^{1/2}u\|_p \|w\|_{\IW^{1,p'}} = \|f\|_p \|w\|_{\IW^{1,p'}}.
\end{align*}
Thus $\Lop u \in \IW^{-1,p}(\Omega)$, which implies $u\in \IW^{1,p}(\Omega)$ thanks to \eqref{Extended square root i}.
\end{proof}
\section{Holomorphic dependence: Proof of Theorem~\ref{Thm: Holomorphy}}
\label{Sec: Holomorphy Lp}

We remind the reader that for $p=2$ the holomorphic dependence of the $\H^\infty$-calculus and the square root as stated in parts \eqref{Holo1} and \eqref{Holo2} of Theorem~\ref{Thm: Holomorphy} have already been obtained in Section~\ref{Sec: L2}. Compare with Corollaries~\ref{Cor: Functional calculus L L2} and \ref{Cor: Square root holomorphic L2}. For $p$ as specified in \eqref{Holo1} and \eqref{Holo2} of Theorem~\ref{Thm: Holomorphy} they are at least locally bounded on $O$ as we have proved in Theorems~\ref{Thm: Hinfty in Lp}, \ref{Thm: Square root solution Lp}, and \ref{Thm: Extended square root solution Lp}, respectively.

For $p \in (1,\infty)$ let now $X_p$ denote either of the spaces $\L^p(\Omega)^m$ and $\IW^{1,p}(\Omega)$. Then $X_p$ is reflexive and $X_p \cap X_2$ is dense in both $X_2$ and $X_p$. Hence, all statements of Theorem~\ref{Thm: Holomorphy} are instances of the subsequent abstract result on vector-valued holomorphic functions. We refer to \cite[App.~A]{ABHN} for general background. We say that two Banach spaces are \emph{compatible} if they are included in the same linear Hausdorff space.

\begin{lemma}
\label{Lem: Holomorphic extension}
Let $(X_1, X_2)$ and $(Y_1, Y_2)$ be two pairs of compatible complex Banach spaces. Suppose that $X_1 \cap X_2$ is dense in both $X_1$ and $X_2$, $Y_ 1\cap Y_2$ is dense in both $Y_1$ and $Y_2$, and that $Y_2$ is reflexive. Let $O \subseteq \IC$ be an open set and $f: O \to \Lop(X_1,Y_1)$ holomorphic. If there is a finite constant $C$ such that
\begin{align*}
 \|f(z)x\|_{Y_2} \leq C \|x\|_{X_2} \qquad (z \in O, \, x \in X_1 \cap X_2),
\end{align*}
then each $f(z)$ extends from $X_1 \cap X_2$ to a bounded operator $X_2 \to Y_2$, denoted also $f(z)$, and $f: O \to \Lop(X_2, Y_2)$ is holomorphic.
\end{lemma}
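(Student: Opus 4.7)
My plan is as follows. For each $z\in O$ the bound $\|f(z)x\|_{Y_2}\le C\|x\|_{X_2}$ on the $X_2$-dense subspace $X_1\cap X_2$ yields a unique continuous extension $f(z)\in\Lop(X_2,Y_2)$ of norm at most $C$, so the extended family is uniformly bounded and only the holomorphy is at stake. By \cite[Prop.~A.3]{ABHN}, which characterises operator-valued holomorphy as local boundedness plus strong holomorphy, it suffices to check that $z\mapsto f(z)x$ is $Y_2$-holomorphic for $x$ in a dense subset of $X_2$, and, in view of the uniform $\Lop(X_2,Y_2)$-bound, a standard locally uniform limit argument reduces this to $x\in X_1\cap X_2$.

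Fix such an $x$, so that $z\mapsto f(z)x$ is $Y_1$-holomorphic, takes values in $Y_1\cap Y_2$, and is $C\|x\|_{X_2}$-bounded in $Y_2$. The first sub-step would be to prove weak $Y_2$-continuity. If $z_n\to z_0$ in $O$, then $Y_1$-continuity gives $f(z_n)x\to f(z_0)x$ in the common Hausdorff ambient space $Z$. On the other hand, reflexivity of $Y_2$ together with the $Y_2$-boundedness of the sequence ensures, via Banach--Alaoglu and Eberlein--\v Smulian, that every subsequence has a weakly $Y_2$-convergent further subsequence, whose limit must coincide with $f(z_0)x$ by continuity of the embedding $Y_2\hookrightarrow Z$. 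A standard subsequence argument then forces $f(z_n)x\rightharpoonup f(z_0)x$ weakly in $Y_2$.

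With weak $Y_2$-continuity in hand, the range of $z\mapsto f(z)x$ on any compact arc in $O$ is weakly $Y_2$-compact and hence norm-separable in $Y_2$, so Pettis's measurability theorem turns it into a strongly $Y_2$-measurable function. For any closed disk $\overline{D(z_0,r)}\subseteq O$ this legitimises the $Y_2$-valued Bochner integral
\begin{align*}
 I_2(z) := \frac{1}{2\pi\i}\oint_{|w-z_0|=r}\frac{f(w)x}{w-z}\,\d w, \qquad |z-z_0|<r,
\end{align*}
and differentiation under the integral sign shows that $z\mapsto I_2(z)$ is $Y_2$-holomorphic. The heart of the proof is then to identify $I_2(z)$ with the $Y_1$-Cauchy value $f(z)x$: pairing both with any continuous linear functional $z^*$ on the ambient space (which I view as locally convex, as is the case in every application) produces the same scalar integral $\tfrac{1}{2\pi\i}\oint z^*(f(w)x)/(w-z)\,\d w$, because $z^*$ restricts to an element of both $Y_1^*$ and $Y_2^*$ and commutes with the respective Bochner integrals. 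Since such $z^*$ separate points of $Z$, the difference $I_2(z)-f(z)x$ vanishes in $Z$, and injectivity of $Y_2\hookrightarrow Z$ promotes the equality to $Y_2$.

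At this stage $z\mapsto f(z)x$ is $Y_2$-holomorphic on $O$ for every $x\in X_1\cap X_2$; combined with the uniform $\Lop(X_2,Y_2)$-bound and the $X_2$-density of $X_1\cap X_2$, this extends to every $x\in X_2$ through locally uniform limits, and \cite[Prop.~A.3]{ABHN} finally upgrades strong holomorphy to holomorphy in $\Lop(X_2,Y_2)$. The main technical hurdle I anticipate is precisely the comparison between the $Y_1$- and $Y_2$-valued Bochner integrals inside the ambient space: this is where reflexivity of $Y_2$ is exploited twice, once to obtain weak $Y_2$-continuity needed to define $I_2(z)$, and once more, implicitly, for the norm-separability of its weakly compact ranges that makes Pettis's theorem applicable.
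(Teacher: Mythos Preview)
Your argument is correct, but it takes a considerably more involved route than the paper's. The paper proceeds by first proving that $Y_1^*\cap Y_2^*=(Y_1+Y_2)^*$ is dense in $Y_2^*$ --- this is where reflexivity of $Y_2$ enters, via a short annihilator argument --- and then invokes the weak--strong principle \cite[Prop.~A.3]{ABHN} directly: it suffices to check that $z\mapsto\langle y^*,f(z)x\rangle_{Y_2^*,Y_2}$ is holomorphic for $x\in X_1\cap X_2$ and $y^*\in Y_1^*\cap Y_2^*$, and for such pairs this pairing coincides with $\langle y^*,f(z)x\rangle_{Y_1^*,Y_1}$, which is holomorphic by assumption. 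No Bochner integrals, no Pettis measurability, no comparison of Cauchy integrals.

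Two steps in your write-up deserve more care. First, the ambient Hausdorff space is not assumed locally convex; you should observe that one may always replace it by the Banach space $Y_1+Y_2$, whose dual is exactly $Y_1^*\cap Y_2^*$ and separates points by Hahn--Banach --- this is in fact the same structural observation the paper exploits. Second, the passage ``weakly $Y_2$-compact and hence norm-separable'' is not automatic: it holds because the weakly continuous image of the separable arc is weakly separable, and a countable weakly dense subset spans a norm-separable, norm-closed (hence weakly closed) subspace that therefore contains the entire image. With these two clarifications your proof goes through; the paper's argument simply reaches the same destination with much less machinery by testing against the right dense set of functionals from the outset.
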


\begin{proof}
The extension $f: O \to \Lop(X_2, Y_2)$ comes from the assumption and since $X_1 \cap X_2$ is dense in $X_2$. For clarity let us call it $g$ just in this proof. Our assumption guarantees $\|g(z)\|_{\Lop(X_2,Y_2)} \leq C$ for all $z \in O$, that is, $g$ is bounded. 

Next we shall prove that the intersection $Y_1^* \cap Y_2^*$ has a meaning and is a dense subspace of $Y_2^*$. Since $Y_1$ and $Y_2$ are compatible, we can consider their sum
\begin{align*}
 Y_1 + Y_2 = \{y_1 + y_2 : y_1 \in Y_1, \, y_2 \in Y_2\}, \qquad \|y\|_{Y_1 + Y_2} = \inf_{\substack{y_1 \in Y_1, \, y_2 \in Y_2 \\ y = y_1 + y_2}} \|y_1\|_{Y_1} + \|y_2\|_{Y_2}.
\end{align*}
This is again a Banach space \cite[Sec.~1.2.1]{Triebel}. Since $Y_1 \cap Y_2$ is dense in both $Y_1$ and $Y_2$, it is also dense in $Y_1 + Y_2$. This justifies to interpret, by restriction of functionals, the inclusions 
\begin{align*}
  (Y_1 + Y_2)^* \subseteq Y_2^* \qquad \text{and} \qquad (Y_1 + Y_2)^* \subseteq Y_1^* \cap Y_2^*
\end{align*}
within the ambient space $(Y_1 \cap Y_2)^*$. The first inclusion is dense since $Y_2$ is reflexive: Any functional on $Y_2^*$ that annihilates $(Y_1 + Y_2)^*$ is given by evaluation at some element of $Y_2$, which therefore has to vanish in $Y_1 + Y_2$. The second one is even an equality: Every $y^* \in Y_1^* \cap Y_2^*$ satisfies $|y^*(y)| \leq \max\{\|y^*\|_{Y_1^*}, \|y^*\|_{Y_2^*} \} \|y\|_{Y_1 + Y_2}$ for all $y \in Y_1 \cap Y_2$ and hence extends by density to a functional on $Y_1 + Y_2$. The intermediate claim follows from these two observations.

By the dense inclusions just alluded to, we can compute the norm of any $T \in \Lop(X_2,Y_2)$ as
\begin{align*}
 \|T\|_{\Lop(X_2,Y_2)} = \sup_{\substack{x \in X_1 \cap X_2 \\ \|x\|_{X_2} = 1}} \sup_{\substack{y^* \in Y_1^* \cap Y_2^* \\ \|y^*\|_{Y_2^*} = 1}} \left|\dscal{y^*}{Tx}_{Y_2^*,Y_2} \right|.
\end{align*}
Since $g$ is bounded, the weak-strong principle for holomorphic functions entails that holomorphy of $g$ is equivalent to holomorphy of all functions $z \mapsto \dscal{y^*}{g(z) x}_{Y_2^*,Y_2}$, where $x\in X_1 \cap X_2$ and $y \in Y_1^* \cap Y_2^*$, see \cite[Prop.~A.3]{ABHN}. But the latter follows from the holomorphy of $f$ since we have 
\begin{align*}
 \dscal{y^*}{g(z)x}_{Y_2^*, Y_2} = \dscal{y^*}{f(z)x}_{Y_1^*, Y_1}
\end{align*}
by construction.
\end{proof}


\end{document}